\theoremstyle{plain}
\newtheorem{theorem}{Theorem}[section]
\newtheorem{proposition}[theorem]{Proposition}
\newtheorem{lemma}[theorem]{Lemma}
\theoremstyle{definition}
\theoremstyle{remark}
\numberwithin{equation}{section}
\numberwithin{theorem}{section}
\newcommand{\de}{\textrm{d}}
\newcommand{\dee}{\mathrm{d}}
\newcommand{\ZZe}{\mathscr{Z}^\epsilon}
\newcommand{\Ze}{Z^\epsilon}
\newcommand{\Zpe}{Z^{'\epsilon}}
\newcommand{\YYe}{\mathscr{Y}^\epsilon}
\newcommand{\XXe}{\mathscr{X}^\epsilon}
\newcommand{\rhoe}{\rho^\epsilon}
\newcommand{\rhoep}{\rho^{'\epsilon}}
\newcommand{\trhoe}{\tilde{\rho}^\epsilon}
\newcommand{\trhoep}{\tilde{\rho}^{'\epsilon}}
\newcommand{\trhoepn}{\tilde{\rho}^{'\epsilon_n}}
\newcommand{\thetae}{\theta^\epsilon}
\newcommand{\tthetaep}{\tilde{\theta}^{'\epsilon}}
\newcommand{\tthetaepn}{\tilde{\theta}^{'\epsilon_n}}
\newcommand{\varrhoe}{\varrho^\epsilon}
\newcommand{\varthetae}{\vartheta^\epsilon}
\newcommand{\tZe}{\tilde{Z}^\epsilon}
\newcommand{\Ye}{Y^\epsilon}
\newcommand{\Ype}{Y^{'\epsilon}}
\newcommand{\tZed}{\tilde{Z}^\epsilon_{\Delta_\epsilon}}
\newcommand{\tZpe}{\tilde{Z}^{'\epsilon}}
\newcommand{\ue}{U^\epsilon}
\newcommand{\pe}{\Phi^\epsilon}
\newcommand{\ued}{U^\epsilon_{\Delta_\epsilon}}
\newcommand{\ped}{\Phi^\epsilon_{\Delta_\epsilon}}
\newcommand{\w}{w}
\newcommand{\y}{y}
\newcommand{\z}{z}
\newcommand{\Id}{\mathrm{Id}}
\newcommand{\nod}{\mathfrak{n}_{\Delta_\epsilon}}
\newcommand{\etq}{\epsilon^{-\frac{3}{4}}}
\newcommand{\etqp}{\epsilon^{\frac{3}{4}}}
\newcommand{\euq}{\epsilon^{\frac{1}{4}} }
\newcommand{\eum}{\epsilon^{\frac{1}{2}}}
\newcommand{\emq}{\epsilon^{-\frac{1}{4}}}
\newcommand{\emm}{\epsilon^{-\frac{1}{2}}}
\newcommand{\dbtilde}[1]{\accentset{\approx}{#1}}
\renewcommand{\epsilon}{\varepsilon}
\renewcommand{\tilde}{\widetilde}
\DeclareMathOperator{\Expect}{\mathsf{E}}
\begin{document}



\title{Long time fluctuations at critical parameter of Hopf's bifurcation \footnote{Declaration of interests: none. This research did not receive any specific grant from funding agencies in the public, commercial, or not-for-profit sectors. Michele Aleandri and Paolo Dai Pra are members of the Gruppo Nazionale per l’Analisi Matematica, la Probabilità e le loro Applicazioni (GNAMPA) of the Istituto Nazionale di Alta Matematica (INdAM).\\ Declaration of generative AI and AI-assisted technologies in the writing process: none.}} 


\author{M. Aleandri \footnote{LUISS University, Viale Romania 32, 00197 Rome, Italy. \textit{maleadri@luiss.it}}\, and  P. Dai Pra\footnote{Università di Verona,  Strada le Grazie, 15, 37134 Verona, Italy. \textit{paolo.daipra@univr.it} } }   
\date{}
\maketitle
\begin{abstract}
A dynamical system that undergoes a supercritical Hopf's bifurcation is perturbed by a multiplicative Brownian motion that scales with a small parameter $\epsilon$. The random fluctuations of the system at the critical point are studied when the dynamics starts near equilibrium, in the limit as $\epsilon$ goes to zero.  Under a space-time scaling the system can be approximated by a 2-dimensional process lying on the centre manifold  of the Hopf's bifurcation and a slow radial component together with a fast angular component are identified. Then the critical fluctuations are described by a “universal” stochastic differential equation whose coefficients are obtained taking the average with respect to the fast variable.
\end{abstract}

{\bf Keywords} Hopf's bifurcation, small noise perturbation, averaging principle, critical fluctuations.\\
{\bf Subjclass [2010]}
{	60H10, 
	34F05, 
	37G05, 
	60F99.
	



\section{Introduction}

The emergence of self-sustained periodic behavior is a widespread phenomenon in nature: this has stimulated the mathematical modeling that could account for this behavior. For deterministic dynamical systems, in particular for those described by finite dimensional ordinary differential equations, the presence of a {\em Hopf bifurcation} is a feature producing oscillations in a ``universal'' way. 

Dynamical systems of this sort 
find application in various areas such as the investigation of aircraft panel flutter in high supersonic conditions \cite{dowell1970panel}, the dynamics of individual neurons \cite{izhikevich2007dynamical} and the physiological reactions of cells to external stimuli \cite{borisuk1998bifurcation}.\\
Naturally, a question arises in these classical models: What happens if a random perturbation is added to the dynamics? A theory on bifurcations in random dynamical systems was developed by Ludwig Arnold \cite{arnold1996toward} and Peter Baxendal \cite{baxendale1994stochastic}. More recently in \cite{doan2018hopf} the dynamics of a two-dimensional ordinary differential equation in the normal form of Hopf's bifurcation perturbed by additive white noise is studied stating that, for a generic deterministic Hopf's bifurcation similar local dynamics behaviour occurs. In the small noise regime the  theory due to Freidlin and Wentzell, \cite{freidlin1998random}, shows that the the stochastic process behaves as the deterministic counterpart, but close to critical events such as bifurcations, the system can show great sensitivity to small scale random perturbations, see \cite{juel1997effect} for an experimental and numerical study. 

We also remark that the small noise asymptotics are close in spirit to large scale asymptotics for mean field interacting stochastic systems (see e.g. \cite{collet2015collective}). In that context a deterministic dynamical system emerges in the infinite particle limit, and finite size effects correspond to the small noise regime. \\
In this paper  we consider a $n$-dimensional dynamical system that exhibits a supercritical Hopf's bifurcation, and is then perturbed by a multiplicative noise that scales with a small parameter $\epsilon$. We set the system at the critical point, i.e. at the Hopf bifurcation, and study the random fluctuations of the dynamics starting near equilibrium, in the limit as $\epsilon \rightarrow 0$. A space-time scaling allows to approximate the system by a $2$-dimensional process lying on the centre manifold of the Hopf's bifurcation.

In this regime two scales are identified: a slow radial component and a fast angular component. In the limit, as $\epsilon$ goes to zero, an averaging effect is observed:  the slow variable converges to the solution of a stochastic differential equation  whose coefficients are obtained by averaging with respect to the fast variable. The study of averaging principle in various dynamical systems dates back to the works of Khasminskii and others, summarized in, e.g., Freidlin and Wentzell \cite{freidlin1998random}, Kabanov and Pergamenshchikov \cite{kabanov2003two}. In this work the average process is obtained following the Strook-Varadhan approach to martingale problem, also developed in detailed in \cite{dai2019dynamics} where an averaging principle is proved.\\

The paper is structured as follows. In section \ref{sec:model} the model is presented and heuristic arguments about the limit process are presented. Section \ref{sec:normal form} is devoted to obtain the normal form of the rescaled stochastic process and in section \ref{sec:reduction} the reduction principle is proved. The $2$-dimensional process and its limit is studied in section \ref{sec:2dimensional} and section \ref{sec:proofMain} concludes the paper with the proof of the main result. Appendix \ref{appendix} contains the explicit calculations to define the transformation that allows to obtain the normal form of the deterministic process.

\subsection{Notation}
Small letters are used for the deterministic processes, i.e. $x(t)\in\mathbb{R}^n$, $z(t)\in\mathbb{R}^2$, and $y(t)\in\mathbb{R}^{n-2}$. Capital letters and Greek letter are used for the stochastic processes, i.e. $\mathscr{X}^\epsilon(t),X^\epsilon(t)\in\mathbb{R}^n$, $\mathscr{Z}^\epsilon(t),Z^\epsilon(t)\in\mathbb{R}^2$,  $\mathscr{Y}^\epsilon(t),Y^\epsilon(t)\in\mathbb{R}^{n-2}$ and $(\varrhoe(t),\varthetae(t)),(\rho(t),\theta(t)) \in(0,\infty)\times\mathbb{R}$, $t\geq0$. Given a vector $x$,  set $\|x\|:=(\sum_{i}x_i^2)^{\frac{1}{2}}$, and for a matrix $A$, set $\|A\|:=\big(\sum_i\sum_jA_{ij}^2\big)^{\frac{1}{2}}$. Given a matrix $A$, then $A^{\mathsf{T}}$ is the transposed matrix of $A$. Given a function $f:\mathbb{R}^n\to\mathbb{R}^m$, call $Df=\big(Df_i\big)_{i=1}^m$ the vector of the $n$-dimensional gradients of the components of $f$, $D^2f=\big(D^2f_i\big)_{i=1}^m$ the vector of the $n\times n$-hessian matrices of the components of $f$ and $\mathrm{Tr} D^2f=\big(\mathrm{Tr}D^2f_i\big)_{i=1}^m$ the vector of the traces.


\section{The model and main result}\label{sec:model}

Consider a family of dynamical systems, indexed by a real parameter $\mu$, given by the ordinary differential equation in $\mathbb{R}^n$, $n\in\mathbb{N}$,
\begin{equation}\label{eq:ODE}
	\de x(t)= b(x(t),\mu) \de t,
\end{equation}

with initial condition $x(0)=x_0$. Suppose that

\begin{itemize}
	\item[\textbf{H1.1}] $\forall\mu \in \mathbf{R}$, $b(\cdot,\mu):\mathbb{R}^n\to\mathbb{R}^n$ is a $C^4$ function;
	\item[\textbf{H1.2}] the system \eqref{eq:ODE} has a critical point $x_{\mu_c}$  for $\mu=\mu_c$, i.e. $b(x_{\mu_c},\mu_c)=0$;
	\item[\textbf{H1.3}] the Jacobian matrix $\textrm{D} b(x_{\mu_c},\mu_c)$ has 2 simple pure imaginary eigenvalues and $n-2$ eigenvalues with negative real part;
	\item[\textbf{H1.4}]	 $\frac{\de}{\de \mu}[\operatorname{Re} \lambda(\mu)]_{\mu=\mu_c} \neq 0 $, where  $\lambda(\mu)$, $\bar{\lambda}(\mu)$ are the eigenvalues of the Jacobian matrix $\textrm{D} b(x_{\mu},\mu)$\footnote{The existence of the critical point $x_\mu$ is assured by \textbf{H1.1}, \textbf{H1.2} and \textbf{H1.3}, see \cite[Theorem~2]{perko2013differential}.}.
\end{itemize}
These are the standard hypothesis to observe a Hopf's bifurcation as stated in  \cite[Theorem~2]{perko2013differential}. 
The eigenvalues of $Db(x_{\mu},\mu)$ intersect the imaginary axis at $\mu=\mu_c$, then the dimensions of the stable and unstable manifolds of $\mathbf{x}_\mu$ will undergo alteration, leading to a modification in the local phase portrait of \eqref{eq:ODE} as $\mu$ traverses the bifurcation point $\mu_c$. Generally speaking, a Hopf's bifurcation occurs when a periodic orbit emerges due to the changing stability of the equilibrium point $x_{\mu_c}$. Moreover it is called \emph{supercritical} if for $\mu\leq 0$ the origin is an asymptotically stable equilibrium and for $\mu>0$ an attractive limit cycle appears and the origin becomes unstable. Figure \ref{fig:Hopfd3} shows an example of supercritical bifurcation in dimension 3. 
\begin{figure}
	\centering
	\begin{subfigure}{0.3\textwidth}
		\centering
		\includegraphics[width=\textwidth]{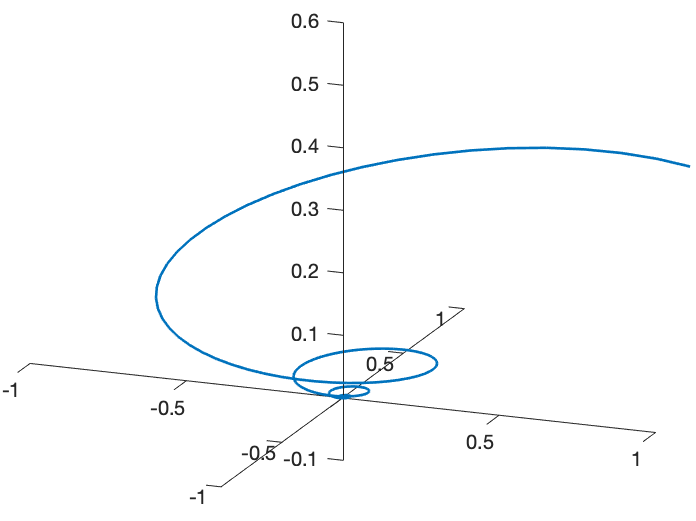}
		\caption{$\mu<0$}
	\end{subfigure}
	\begin{subfigure}{0.3\textwidth}
		\centering
		\includegraphics[width=\textwidth]{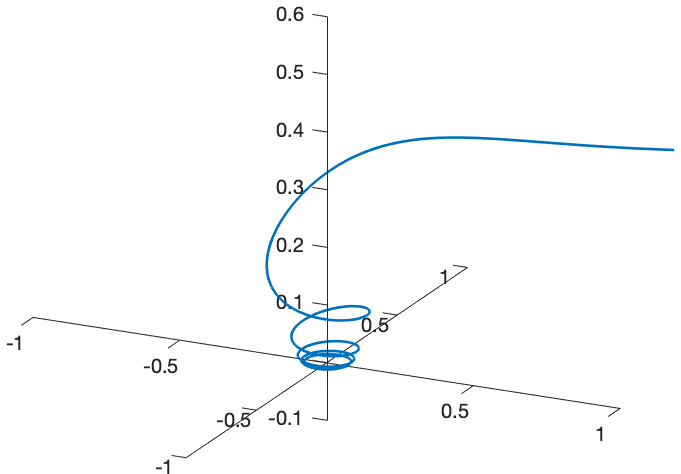}
		\caption{$\mu=0$}
	\end{subfigure}
	\begin{subfigure}{0.3\textwidth}
		\centering
		\includegraphics[width=\textwidth]{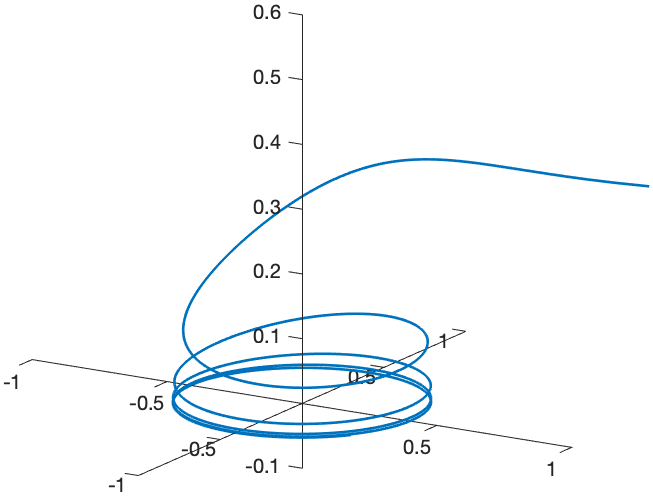}
		\caption{$\mu>0$}
	\end{subfigure}
	\caption{A supercritical Hopf's bifurcation in dimension 3.}
	\label{fig:Hopfd3}
\end{figure}
\noindent
Then make the following assumption 
\begin{itemize}
	\item[(\textbf{H1}):] Hypothesis \textbf{H1.1}, \textbf{H1.2}, \textbf{H1.3} and \textbf{H1.4}  hold and the Hopf's bifurcation is supercritical.
\end{itemize}
For the rest of the paper, without loss of generality, take $x_{\mu_c}=0$, $\mu_c=0$. \\

In this paper we apply a random perturbations of the system above.

Let be $\sigma:\mathbb{R}^n\to\mathbb{R}^n\times\mathbb{R}^m$ a positive smooth function and $\epsilon>0$, then consider the following SDE:

\begin{equation}\label{eq:SDE}
	\de \mathscr{X}^\epsilon(t)= b(\mathscr{X}^\epsilon(t),\mu) \de t + \sqrt{\epsilon} \sigma(\mathscr{X}^\epsilon(t))\de B(t),
\end{equation}
with initial condition $\mathscr{X}^\epsilon(0)=X_0^\epsilon$ and $B$ is an $m$-dimensional Brownian motion.  Under suitable hypothesis of regularity of the function $\sigma$, that are postponed to section \ref{subsec:main result}, the solution $\mathscr{X}^\epsilon$ of \eqref{eq:SDE} converges in distribution, on any time interval $[0,T]$ with $T>0$ and as $\epsilon$ goes to 0, to the solution $x$ of \eqref{eq:ODE} provided that $X_0^\epsilon$ converges to $x_0$, see \cite{freidlin1998random}. \\

In particular, if $X_0^\epsilon$ converges to $0$, then the whole process $\mathscr{X}^\epsilon$ converges to the constant $0$. The aim of this paper is to investigate the fluctuations around this limiting behavior, at the critical point $\mu = \mu_c=0$. This will require a time scaling, which captures the long time behavior of the original process, as well as a space scaling which ''amplifies'' the process around zero. This procedure will be first illustrated in next subsection, at a heuristic level, for a two dimensional system exhibiting a Hopf bifurcation in the so-called {\em normal form}.

\subsection{Heuristic for $2$-dimensional Hopf's bifurcation}\label{sec:heuristic}
A simple example of Hopf's bifurcation is given by the following $2$-dimensional system of equations
\begin{equation}\label{eq:normalform}
	\begin{aligned}
		\de {z}_1&=\Big(-z_2 -z_1(z_1^2+z_2^2-\mu) \Big)\de t,\\
		\de {z}_2&=\Big(z_1 -z_2(z_1^2+z_2^2-\mu) \Big)\de t.
	\end{aligned}
\end{equation}
As showed in \cite{perko2013differential}, the origin is a stable or an unstable focus of this nonlinear system if $\mu<0$ or if $\mu>0$ respectively. For $\mu=0$, $D f(0,0)$ has a pair of pure imaginary eigenvalues, and the origin is a stable focus. For $\mu>0$ a stable limit cycle $ \Gamma_\mu: \gamma_\mu(t)=\sqrt{\mu}(\cos t, \sin t)^T $ emerges. Suppose that the dimension of the perturbing Brownian motion is 
$m=2$ and that the diffusion coefficient in \eqref{eq:SDE} is constant, i.e. 
$$\sigma(x)=\begin{pmatrix} 	\sigma_{11} & \sigma_{12} \\ \sigma_{21} & \sigma_{22} \end{pmatrix}\in\mathbb{R}^{2\times2}.$$
Under these hypothesis, when $\mu=0$, equation \eqref{eq:SDE} can be written as:
\begin{equation}\label{SDE-2normal form}
	\begin{aligned}
		\de \ZZe_1&=\big(-\ZZe_2 -\ZZe_1\big((\ZZe_1)^2+(\ZZe_2)^2\big)\big)\de t +\sqrt{\epsilon}\big(\sigma_{11}\de B_1+\sigma_{12}\de B_2\big),\\ 
		\de \ZZe_2&=\big(\ZZe_1 -\ZZe_2\big((\ZZe_1)^2+(\ZZe_2)^2\big)\big)\de t+ \sqrt{\epsilon}\big(\sigma_{21}\de B_1+\sigma_{22}\de B_2\big). 
	\end{aligned}
\end{equation}

Note that the particular form of the drift allows the use of a ``compact containment'' argument, to show that \eqref{SDE-2normal form} has a unique global solution.
It is convenient to convert to polar coordinates performing the following change of coordinate:
\begin{equation*}
	\begin{aligned}
		\mathfrak{o}:&(0,\infty)\times\mathbb{R}\to \mathbb{R}^2\setminus\{(0,0)\}\\
		&\mathfrak{o}(\rho,\theta)=\begin{pmatrix*}\rho\cos\theta\\\rho\sin\theta\end{pmatrix*}=\begin{pmatrix*}z_1\\z_2\end{pmatrix*}
	\end{aligned}
\end{equation*}
Equation \eqref{eq:normalform} can be rewritten as $\begin{cases} \dot{\rho}=-\rho^3\\ \dot{\theta}=1 \end{cases}$, when $\mu=0$ and it is immediate to observe that the radial coordinate converges to the zero. Applying the same change of coordinates to the stochastic process $\mathfrak{o}(\varrhoe,\varthetae)=\left(\begin{smallmatrix}\varrhoe\cos\varthetae \\ \varrhoe\sin\varthetae \end{smallmatrix}\right) = \left(\begin{smallmatrix} \ZZe_1 \\ \ZZe_2 \end{smallmatrix}\right)$, then equation \eqref{SDE-2normal form} in the new variables reads:
\begin{align}\label{sde:polar2_1}
	&\begin{aligned}
		\de \varrhoe =&\frac{1}{\varrhoe}\Big[-(\varrhoe)^4+\frac{\epsilon}{2}(\sigma_{11}^2+\sigma_{12}^2) (\sin\varthetae)^2+\frac{\epsilon}{2}(\sigma_{21}^2+\sigma_{22}^2)(\cos\varthetae)^2 \\ 
		&\quad + \epsilon(\sigma_{11}\sigma_{21}+\sigma_{21}\sigma_{22})\sin\varthetae\cos\varthetae\Big]\de t \\
		&+\sqrt{\epsilon}\big(\sigma_{11}\cos\varthetae+\sigma_{21}\sin\varthetae\big)\de B_1 + \sqrt{\epsilon}\big(\sigma_{12}\cos\varthetae+\sigma_{22}\sin\varthetae\big)\de B_2;
	\end{aligned}\\\label{sde:polar2_2}
	&\begin{aligned}
		\de \varthetae =& \Big\{ 1-\epsilon\Big[\frac{1}{(\varrhoe)^2}\sin\varthetae\cos\varthetae\big(\sigma_{11}^2+\sigma_{12}^2-\sigma_{21}^2-\sigma_{22}^2\big)\\
		&\quad + \frac{1}{(\varrhoe)^2}\big((\cos\varthetae)^2-(\sin\varthetae)^2\big)(\sigma_{11}\sigma_{21}+\sigma_{12}\sigma_{22})\Big]\Big\}\de t\\ 
		&+\sqrt{\epsilon}\frac{1}{\varrhoe}\big(\sigma_{21}\cos\varthetae-\sigma_{11}\sin\varthetae\big)\de B_1 + \sqrt{\epsilon}\frac{1}{\varrhoe}\big(\sigma_{22}\cos\varthetae-\sigma_{12}\sin\varthetae\big)\de B_2.
	\end{aligned}
\end{align}
Note that in these equations we must deal with the singularity at $\rho=0$; this will be done rigorously by stopping the process as $\varrhoe$ enters a suitable neighborhood of zero. We now introduce the following space-time rescaling:

\begin{equation*}
	\rhoe(t):=\epsilon^{-1/4}\varrhoe(\epsilon^{-1/2}t),\qquad\thetae(t):=\varthetae(\epsilon^{-1/2}t).
\end{equation*}
Equations \eqref{sde:polar2_1} and \eqref{sde:polar2_2} become:
\begin{align}\label{sde:polar2_1e}
	&\begin{aligned}
		\de \rhoe =\frac{1}{\rhoe}&\Big[-(\rhoe)^4+\frac{1}{2}(\sigma_{11}^2+\sigma_{12}^2) (\sin\thetae)^2+\frac{1}{2}(\sigma_{21}^2+\sigma_{22}^2)(\cos\thetae)^2 \\ 
		&\quad + (\sigma_{11}\sigma_{21}+\sigma_{21}\sigma_{22})\sin\thetae\cos\thetae\Big]\de t + \de M_\rho^\epsilon;
	\end{aligned}\\ \label{sde:polar2_2e}
	&\begin{aligned}
		\de \thetae  = \Big\{\epsilon^{-1/2}&-\frac{1}{(\rhoe)^2}\Big[\sin\thetae\cos\thetae(\sigma_{11}^2-\sigma_{21}^2+\sigma_{12}^2-\sigma_{22}^2)\\ 
		&+\big((\cos\thetae)^2-(\sin\thetae)^2\big)(\sigma_{11}\sigma_{21}+\sigma_{12}\sigma_{22})\Big]\Big\}\de t+ \de M_\theta^\epsilon,
	\end{aligned}
\end{align}
where
\begin{align*}
	&M_\rho^\epsilon(t)  = \int_{0}^{t}\Big(\sigma_{11}\cos\thetae(r)+\sigma_{21}\sin\thetae(r)\Big)\de B_1(r)+\int_{0}^{t}\Big(\sigma_{12}\cos\thetae(r)+\sigma_{22}\sin\thetae(r)\Big)\de B_2(r);\\
	&\begin{aligned}
		M_\theta^\epsilon(t)  = \int_{0}^{t}\frac{1}{\rhoe(r)}\Big[\sigma_{21}\cos\thetae(r)- &\sigma_{11}\sin\thetae(r)\Big]\de B_1(r)\\ &+\int_{0}^{t}\frac{1}{\rhoe(r)}\Big[\sigma_{22}\cos\thetae(r)-\sigma_{12}\sin\thetae(r)\Big]\de B_2(r).
	\end{aligned}
\end{align*}

This scaling reveals the existence of two time scales, as the angular variable moves much faster than the radial one.
Letting $\epsilon$ going to zero the fast variable is expected to out; as the angular velocity is essentially constant, the averaging measure should be the uniform distribution on $[0,2\pi]$, so the coefficients in the dynamics of the radial variable must be averaged with respect the uniform measure, as we now illustrate.\\
The function $a:(0,\infty)\times\mathbb{R}\to \mathbb{R}$ given by
\begin{equation*}
	a(\eta,\phi)=\frac{1}{\eta}\Big[-\eta^4+ \frac{1}{2}(\sigma_{11}^2+\sigma_{12}^2) (\sin\phi)^2+\frac{1}{2}(\sigma_{21}^2+\sigma_{22}^2)(\cos\phi)^2 + (\sigma_{11}\sigma_{21}+\sigma_{21}\sigma_{22})\sin\phi\cos\phi\Big] ,
\end{equation*} 
which is the drift of the radial variable, should be replaced in the limit by
the function $\bar{a}:(0,\infty)\times\mathbb{R}\to \mathbb{R}$ given by
\begin{equation*}
	\bar{a}(\eta)=\frac{1}{2\pi}\int_{0	}^{2\pi}a(\eta,\phi)\de \phi= \frac{1}{\eta}\Big[-\eta^4+\frac{1}{4}\big(\sigma_{11}^2+\sigma_{12}^2+\sigma_{21}^2+\sigma_{22}^2\big)\Big].
\end{equation*}
Similarly, the martingale $M_\rho^\epsilon$, whose quadratic variation is given by
\begin{equation*}
	\langle M_\rho^\epsilon\rangle(t)= \int_{0}^{t}w\big(\thetae(r)\big)\de r,
\end{equation*}
with
$$w(\phi)=(\sigma_{11}\cos \phi+\sigma_{21}\sin \phi)^2+(\sigma_{21}\cos \phi+\sigma_{22}\sin \phi)^2,$$
is expected to converge to a martingale
$\bar{M}$, with quadratic variation:
\begin{equation*}
	\langle \bar{M}\rangle(t)=\int_{0}^{t}\bar{w}\de r = \int_{0}^{t}\frac{1}{2\pi} \int_0^{2\pi} w(\phi) \de \phi\de r  = \frac{1}{2}(\sigma_{11}^2+\sigma_{12}^2+\sigma_{21}^2+\sigma_{22}^2)t.
\end{equation*}

Summing up, we consider the following stochastic differential equation
\begin{equation}\label{eq:limit process2d}
	\de \bar{\rho}(t)=\bar{a}(\bar{\rho}(t))\de t + \de \bar{M}(t),
\end{equation}
with initial condition $\bar{\rho}(0)=\bar{\rho}_0$.  \\

Let $\rho^\epsilon,\bar{\rho}$ be the solutions to the equations \eqref{sde:polar2_1e}, \eqref{eq:limit process2d}, respectively, with initial conditions $\rho_0^\epsilon$, $\bar{\rho}_0$. The main result of this paper shows that, as $\epsilon\downarrow0$, if $\rho_0^\epsilon$ converges weakly to $\bar{\rho}_0$, then $\rho^\epsilon$ converges weakly to $\bar{\rho}$. \\

\subsection{Main result}\label{subsec:main result}

In this subsection the main result of this paper is stated after adding to (\textbf{H1}) a list of assumptions on the diffusion matrix $\sigma$.

Letting $A=\textrm{D} b(0,0)$, under hypothesis (\textbf{H1}) there exists an invertible matrix $C$ such that 
\begin{equation} \label{def:matrixC}
	B=C^{-1}A C = \begin{pmatrix}
		Q & 0 \\ 0 & P  
	\end{pmatrix}, 
\end{equation}
where $Q\in\mathbb{R}^{2\times2}$ has 2 pure imaginary eigenvalues , $P\in\mathbb{R}^{n-2\times n-2}$ has $n-2$ eigenvalues with negative real part.  

With no loss of generality we can assume $Q$ to be of the form
\[
Q = \left( \begin{array}{cc} 0 & - \lambda_0 \\ \lambda_0 & 0 \end{array} \right).
\]

Performing the change of coordinate 
\begin{equation}\label{function c}
	\begin{aligned}
		\mathfrak{c}:\ \mathbb{R}^n&\to\mathbb{R}^n\\
		x\, \, &\to C^{-1}x=(z,y)^{\mathsf{T}} 
	\end{aligned}
\end{equation}
system \eqref{eq:ODE} can be written as
\begin{align}\label{eq:Carr1}
	\de z (t) =& \Big[Qz(t)+f\big(z(t),y(t)\big)\Big]\de t, \\\label{eq:Carr2}
	\de y (t) =& \Big[Py(t)+g\big(z(t),y(t)\big)\Big]\de t,
\end{align}
where $f:\mathbb{R}^n\to\mathbb{R}^2$, $g:\mathbb{R}^n\to\mathbb{R}^{n-2}$ are two functions such that $f(0,0)=0$, $g(0,0)=0$, $\textrm{D}f(0,0)=\underbar{0}$ and $\textrm{D}g(0,0)=\underbar{0}$, where $\underbar{0}$ is the null matrix. \\

We make the following assumptions  on the diffusion coefficient $\sigma$:
\[
(\textbf{H2}):= \left\lbrace
\begin{tabular}{@{}p{10cm}p{10cm}} 
	\textbf{H2.1}\quad  the function $\sigma:\mathbb{R}^n\to\mathbb{R}^{n\times m}$ is a $C^4$ function; \\[3pt]
	\textbf{H2.2}\quad  $\sigma(0,0)\neq \underbar{0}$.
\end{tabular}\right.
\]

Performing the change of coordinate  $\mathfrak{c}\big(\XXe\big)=\big(\ZZe,\YYe\big)^{\mathsf{T}}$ the system \eqref{eq:SDE} can be written as
\begin{align}\label{eq:SDE2dZ}
	\de \ZZe (t) =& \Big[Q\ZZe(t)+f\big(\ZZe(t),\YYe(t)\big)\Big]\de t + \sqrt{\epsilon}\sigma_Q\big(\ZZe(t),\YYe(t)\big)\de B(t),\\\label{eq:SDEn-2dY}
	\de \YYe (t) =& \Big[P\YYe(t)+g\big(\ZZe(t),\YYe(t)\big)\Big]\de t + \sqrt{\epsilon}\sigma_P\big(\ZZe(t),\YYe(t)\big)\de B(t),
\end{align}
where  $\sigma_Q:\mathbb{R}^{2}\to\mathbb{R}^{m}$ and  $\sigma_P:\mathbb{R}^{n-2}\to\mathbb{R}^{m}$ are given by
\[
\left(\begin{array}{c} \sigma_Q(z,y) \\ \sigma_P(z,y) \end{array} \right) = C^{-1} \sigma(C(z,y)).
\]

We let also $\bar{\sigma}=\sigma_Q(0,0)$ and $\bar{\sigma}_{ij}$ its components. 

\begin{theorem}\label{theor:Main}
	Suppose that $b:\mathbb{R}^n\to\mathbb{R}^n$ satisfies assumptions (\textbf{H1}), and $\sigma:\mathbb{R}^n\to\mathbb{R}^{n\times n}$ satisfies assumption (\textbf{H2}). For any $\epsilon>0$, let $(\XXe(t))_{t\geq0}$ be the solution of the SDE \eqref{eq:SDE} with initial condition $x^\epsilon(0)$. Suppose that $\emq x^\epsilon(0)\to x(0)\neq 0$.
	For any $T>0$, define the process 
	
	\begin{equation} \label{Ze}
		\Big(\Ze(t),\Ye(t)\Big)_{t\in[0,T]}=\Big(\mathfrak{c}\big(\emq\XXe(\emm t)\big)\Big)_{t\in[0,T]},
	\end{equation}
	where the function $\mathfrak{c}$ is defined in \eqref{function c}. Then, as $\epsilon\downarrow0$,
	\begin{equation*}
		\Big(\rhoe(t)\Big)_{t\in[0,T]}=\Big(\sqrt{(\Ze_1(t))^2+(\Ze_2(t))^2}\Big)_{t\in[0,T]} \mbox{weakly converges to }  \Big(\bar{\rho}(t)\Big)_{t\in[0,T]},
	\end{equation*}
	where $\bar{\rho}$ is the solution to the SDE
	\begin{equation}\label{eq:limit process2ndimensional}
		\dee \bar{\rho}=\bar{b}(\bar{\rho})\dee t + \sqrt{\frac{1}{2}\big(\Sigma_{1}^2+\Sigma_{2}^2\big)}\dee \tilde{B},
	\end{equation}
	with $\bar{b}(\eta)= \frac{1}{\eta}[-\eta^4+\frac{1}{4}(\Sigma_{1}^2+\Sigma_{2}^2)]$, initial condition $\bar{\rho}_0 = \sqrt{z_1(0)^2+z_2(0)^2}$, where $\big(z(0),y(0)\big)=\mathfrak{c}\big(x(0)\big)$, and $\Sigma_i^2=\sum_{j=1}^n\bar{\sigma}_{ij}^2$, $i=1,2$.
\end{theorem}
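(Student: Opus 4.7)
My approach would mirror the section structure announced in the introduction: (i) bring the center block into normal form, (ii) reduce the $n$-dimensional dynamics to a $2$-dimensional problem on the center manifold, and (iii) apply an averaging principle in polar coordinates. The heuristic in Section~\ref{sec:heuristic} already dictates both the rescaling $\emq\XXe(\emm t)$ and the limit coefficients, so the task is to justify each step rigorously.

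\textbf{Step 1 (Normal form, Section~\ref{sec:normal form}).} On the $(\ZZe,\YYe)$ system I would apply a near-identity polynomial change of coordinates, computed in the Appendix, that kills all non-resonant quadratic and cubic terms in the drift $f(\ZZe,\YYe)$ of the center block. After this transformation and after the rescaling \eqref{Ze}, the linear part $Q\ZZe$ becomes $\emm Q\Ze$ (generating the fast rotation of rate $\epsilon^{-1/2}$), the unique resonant cubic term becomes of order one and matches the $-\Ze\|\Ze\|^2$ of \eqref{SDE-2normal form}, while all remaining polynomial terms are of order $\euq$ or smaller. The noise coefficient $\sqrt{\epsilon}\sigma_Q$ evaluated at the rescaled argument converges to the constant matrix $\bar{\sigma}$, so the rescaled center block takes exactly the form of \eqref{SDE-2normal form} up to vanishing perturbations, plus a coupling to $\Ye$.

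\textbf{Step 2 (Reduction, Section~\ref{sec:reduction}).} Because $P$ has spectrum in the left half-plane, the stable coordinate $\Ye$ is slaved to $\Ze$ on the $\emm$ time scale: in the rescaled variables, $P$ becomes $\emm P$, so $\Ye$ relaxes to a neighbourhood of $0$ of size $O(\euq)$ on times of order $\eum$. I would make this precise by a Gronwall estimate on $\mathbb{E}\|\Ye(t)\|^2$, stopped at the first time $\|\Ze(t)\|$ exceeds a large constant (which is controlled by the dissipative cubic in Step 1). The estimate would yield $\sup_{[0,T]}\mathbb{E}\|\Ye(t)\|^2 \to 0$, so that replacing $f(\Ze,\Ye)$ by $f(\Ze,0)$ and $\sigma_Q(\Ze,\Ye)$ by $\bar{\sigma}$ introduces a vanishing error in the $\Ze$-equation. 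This reduces the problem to a genuinely $2$-dimensional averaging problem.

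\textbf{Step 3 (Averaging, Section~\ref{sec:2dimensional}).} On the reduced center block I would pass to polar coordinates $(\rhoe,\thetae)$, introducing the stopping time $\tau_\delta^\epsilon=\inf\{t:\rhoe(t)\le\delta\}$ to control the singularity at the origin. Itô's formula on a test function $\psi(\rho)$ produces, for $t\le \tau_\delta^\epsilon$,
\begin{equation*}
\psi(\rhoe(t))-\psi(\rhoe(0)) = \int_0^t \Bigl[a(\rhoe(r),\thetae(r))\psi'(\rhoe(r)) + \tfrac12 w(\thetae(r))\psi''(\rhoe(r))\Bigr]\de r + \text{mart.}
\end{equation*}
with $a,w$ as in the heuristic (computed from $\bar\sigma$). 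Following the martingale-problem method of \cite{dai2019dynamics}, I would (a) prove tightness of $\{\rhoe\}$ on $C([0,T])$ using moment bounds furnished by the dissipative $-\eta^4$ drift; (b) use the fact that $\thetae$ has angular velocity $\epsilon^{-1/2}+O(1)$ to show that, along any subsequential limit, the empirical measure of $\thetae$ on mesoscopic intervals converges to the uniform law on $[0,2\pi]$, allowing one to replace the time integrals of $a(\rhoe,\thetae)$ and $w(\thetae)$ by $\int_0^t\bar a(\rhoe(r))\de r$ and $\int_0^t \bar w\,\de r$; and (c) identify every limit point as a solution of the martingale problem for \eqref{eq:limit process2ndimensional}. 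Well-posedness of that one-dimensional SDE, whose coefficients are locally Lipschitz away from $\rho=0$ with a strong restoring force, gives uniqueness; letting $\delta\downarrow 0$ removes the stopping.

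\textbf{Expected main difficulty.} The delicate point is Step 3(b): the angular variable is not an autonomous fast diffusion, since $\thetae$ is coupled to $\rhoe$ through terms of order $\epsilon/(\rhoe)^2$ and through the martingale part $M_\theta^\epsilon$ whose quadratic variation blows up near the origin. Making the averaging rigorous requires a careful interplay between the stopping time $\tau_\delta^\epsilon$, the a priori control on $\rhoe$, and the ergodic behaviour of the rotation on $[0,2\pi]$. The normal-form remainder terms and the $\Ye$-coupling from Step~2 contribute further perturbations that must be shown to vanish uniformly on $[0,\tau_\delta^\epsilon\wedge T]$; these, however, are amenable to standard $L^2$ estimates and are expected to be technical rather than conceptual obstacles.
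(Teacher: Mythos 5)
Your plan reproduces the paper's architecture faithfully: normal form transformations on the centre block (Section~\ref{sec:normal form}), slaving of the stable coordinate (Section~\ref{sec:reduction}), and a Stroock--Varadhan-style averaging argument in polar coordinates after stopping away from the origin (Section~\ref{sec:2dimensional}), all stitched together with a telescoping sequence of stopping times as in Section~\ref{sec:proofMain}. Conceptually this is the same route, and you correctly flagged the real difficulty (the angular variable is coupled to $\rhoe$ and the martingale part degenerates near the origin), which the paper meets by stopping outside $(\delta,N)$ and invoking \cite[Proposition~3.2]{dai2019dynamics}.

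The one place where your sketch diverges in a way that could bite is Step~2. You propose to show $\sup_{[0,T]}\mathbb{E}\|\Ye(t)\|^2 \to 0$ by a Gronwall estimate, stop at a fixed large constant radius, and then replace $f(\Ze,\Ye)$ by $f(\Ze,0)$. The paper instead approximates $\Ype$ by the rescaled centre manifold $\emq h(\euq\Zpe)$, introduces the slowly growing radius $\Delta_\epsilon=[\ln(-\ln\epsilon)]^{1/2}$, and proves the \emph{pathwise, high-probability, supremum-in-time} estimate $\sup_t\|\ued(t)\|\le\epsilon^q$ (Theorem~\ref{teo:ued}) using the Garsia--Rodemich--Rumsey lemma. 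The reason a plain $L^2$ bound may not suffice is that the coupling terms feeding back into the $\Zpe$-equation carry negative powers of $\epsilon$ --- for example $\emq f^{(2)}(\Ype)$ and $\etq r_f(\euq\cdot,\euq\cdot)$ in \eqref{eq:newvariables1} --- and also that Theorem~\ref{teo:ped}, which compares $\Zpe$ to the reduced $\tZpe$ via Gronwall, requires a uniform-in-time control of $\ued$ inside the exponential factor $e^{K\Delta_\epsilon^2 T}$. Because $e^{K\Delta_\epsilon^2 T}=(-\ln\epsilon)^{KT}$ grows, the error term multiplying it must be polynomially small in $\epsilon$, uniformly in $t$, with high probability. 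A bound on $\mathbb{E}\|\Ye\|^2$ alone does not directly deliver that, nor does stopping at a fixed radius cooperate with the subsequent argument that the stopping time does not happen before $T$ (established in Section~\ref{sec:proofMain} using Lemma~\ref{lemma:uniquesolutionZe} and Theorems~\ref{teo:ued}, \ref{teo:ped}). If you keep your Step~2 as written, you should upgrade the $L^2$ moment estimate to a uniform-in-time high-probability estimate (e.g.\ via GRR, as the paper does), and make $\Delta_\epsilon$ grow slowly so that the stopping time is asymptotically irrelevant. Modulo that refinement, the two proofs coincide.

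Two smaller remarks: the paper removes quadratic and cubic non-resonant terms in two separate stages ($\mathfrak{p}$ acting on the full $n$-dimensional system before the centre-manifold reduction, and $\mathfrak{q}$ acting on the reduced $2$-dimensional system afterwards), whereas you lump them together before reducing. That ordering can be made to work, but be aware that the pure-$y$ quadratic term $f^{(2)}(y)$ cannot be killed by a near-identity change of coordinates in $z$, which is why the paper keeps it and lets the centre manifold absorb it. Also, replacing $\Ye$ by $0$ rather than by $\emq h(\euq\Zpe)$ is an admissible further approximation (the rescaled manifold is $O(\euq)$ away from zero), but proving it directly would make you estimate $\Ype$ rather than the deviation $\ued$, and $\Ype$ does not decay exponentially under $e^{\emm P t}$, so the variation-of-constants argument of Subsection~\ref{subsection:ude} would not apply as cleanly.
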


The proof of Theorem \ref{theor:Main} is divided into several steps. In the first part a  “normal form" for the stochastic differential equation is written applying the local diffeomorphisms defined for the deterministic processes. The second step consists in the reduction of the $n$-dimensional problem to a $2$-dimensional problem. In the third part the global existence for the two dimensional stochastic process and its convergence, when $\epsilon$ goes to zero, are proved. Last step is to put the results together to conclude the proof.   \\


\section{Normal form of the rescaled process} \label{sec:normal form}
The theory of normal forms, see \cite{wiggins1996introduction}, when $\|(z,y)^\mathsf{T}\|<r$ for $r$ small enough allows to define a local diffeomorphism 
\begin{align}\label{diff:p}
	\mathfrak{p}:\ \mathbb{R}^n&\to\mathbb{R}^n\\
	\begin{pmatrix*}z' \\ y \end{pmatrix*}&\to \begin{pmatrix*} z' + p(z',y) \\ y \end{pmatrix*}=\begin{pmatrix*} z \\ y \end{pmatrix*} \nonumber
\end{align}
such that the system of equations \eqref{eq:Carr1},\eqref{eq:Carr2} can be written as
\begin{align}\label{eq:Carr1prime}
	\de z'(t) = & \Big[ Qz'(t)+f^{(3)}\big(z'(t),y(t)\big)+f^{(2)}\big(y(t)\big) + r_f\big(z'(t),y'(t)\big) \Big]\de t,\\ \label{eq:Carr2prime}
	\de y(t) = & \Big[Py(t)+g'\big(z'(t),y(t)\big)\Big]\de t, 
\end{align}
where 
\begin{enumerate}[i)]
	\item the components of the function $p(z',y)$ are polynomials of degree $2$ with terms of type $z_i'z_j'$, $z_i'y_k$, $i,j=1,2$ and $k=1,\ldots,n-2$; 
	\item the components of the function $f^{(3)}$ are polynomials of degree $3$;
	\item the components of the function $f^{(2)}$ are polynomials of degree $2$;
	\item the function $r_f$ contains the higher order terms, namely $D^\alpha r_f(0,0)=0$, for all multi-index $\alpha$ with $|\alpha|\leq 3$;
	\item the function $g'$ is the function $g$ evaluated in the new variable $z'$.
\end{enumerate}
The detailed computations to obtain equations \eqref{eq:Carr1prime} and \eqref{eq:Carr2prime} are postponed to Appendix \ref{appendix}. \\
The system of equations \eqref{eq:Carr1prime},\eqref{eq:Carr2prime} admits a centre manifold $\mathcal{W}_c$ described by a $C^2$ function $h:\mathbb{R}^2\to\mathbb{R}^{n-2}$  with $D^\alpha h(0)=0$, for  $|\alpha|\leq 1$,  see \cite{carr2012applications}. The dynamics on the centre manifold can be described locally around the origin by the equation

\begin{equation}
	\label{eq:Dcentre}
	\de \tilde{z} = \Big[ Q\tilde{z}(t)+f^{(3)}\Big(\tilde{z}(t),h\big(\tilde{z}(t)\big)\Big) \Big]\de t.
\end{equation}
Equation \eqref{eq:Dcentre} is two-dimensional then, applying directly the argument in \cite{wiggins1996introduction} regarding the normal form of the Hopf's bifurcation, there is a local diffeomorphism
\begin{align}\label{diff:q}
	\mathfrak{q}:\ \mathbb{R}^2&\to\mathbb{R}^2\\
	\dbtilde{z}\ & \to  \dbtilde{z} + q(\dbtilde{z}) = \tilde{z} \nonumber
\end{align}
such that the variable $\dbtilde{z}$ satisfies the equation
\begin{equation}\label{eq:z1z2normalform}
	\begin{aligned}
		\de \dbtilde{z}_1(t) &=  \Big[ -\lambda_0\dbtilde{z}_2(t)- \dbtilde{z}_1(t)\big(\dbtilde{z}_1(t)^2+\dbtilde{z}_2(t)^2\big)+f^{(4)}_1\big(\dbtilde{z}(t)\big) \Big]\de t,\\
		\de \dbtilde{z}_2(t) &=  \Big[ +\lambda_0\dbtilde{z}_1(t)- \dbtilde{z}_2(t)\big(\dbtilde{z}_1(t)^2+\dbtilde{z}_2(t)^2\big)+f^{(4)}_2\big(\dbtilde{z}(t)\big) \Big]\de t,
	\end{aligned}
\end{equation}
where the function $f^{(4)}=(f^{(4)}_1,f^{(4)}_2)^{\mathsf{T}}$ contains the higher order terms, namely $D^\alpha f^{(4)}(0)=0$, for all multi-index $\alpha$ with $|\alpha|\leq 3$.\\

Now, the local diffeomorphisms $\mathfrak{p}$ and $\mathfrak{q}$, defined in \eqref{diff:p} and \eqref{diff:q} respectively, are applied
to obtain a stochastic normal form of equation \eqref{eq:SDE}. 
\\

First, equations \eqref{eq:SDE2dZ} and \eqref{eq:SDEn-2dY} give: 
\begin{align}\label{eq:timescaling1}
	\de \Ze =& \Big[\emm Q\Ze+\epsilon^{-\frac{3}{4}}f\big(\epsilon^{\frac{1}{4}}\Ze,\epsilon^{\frac{1}{4}}\Ye\big)\Big]\de t + \sigma_Q\big(\epsilon^{\frac{1}{4}}\Ze,\epsilon^{\frac{1}{4}}\Ye\big)\de B,\\ \label{eq:timescaling2}
	\de \Ye =& \Big[\emm P\Ye+\epsilon^{-\frac{3}{4}}g\big(\epsilon^{\frac{1}{4}}\Ze,\epsilon^{\frac{1}{4}}\Ye\big)\Big]\de t + \sigma_P\big(\epsilon^{\frac{1}{4}}\Ze,\epsilon^{\frac{1}{4}}\Ye\big)\de B.
\end{align}
Let $\mathfrak{p}$ be the transformation defined in \eqref{diff:p}  and take $\Zpe$ such that 
\begin{equation} \label{ZY'}
	\mathfrak{p}\big(\euq\Zpe(t), \euq\Ype(t)\big)=\begin{pmatrix*}\euq\Zpe(t) + p\big(\euq\Zpe(t),\euq \Ype(t)\big) \\ \euq\Ype(t)  \end{pmatrix*} = \begin{pmatrix*} \euq \Ze(t) \\\euq \Ye(t) \end{pmatrix*}.
\end{equation}
As long as $\Zpe(t)$ is contained in a ball of radius smaller than a suitable constant by $\emq$, analogously to the deterministic case, the transformation $\mathfrak{p}$ can be used to remove the quadratic terms in the drift coefficient of equation \eqref{eq:timescaling1}. \\
Let $\mathrm{Id}$ be the identity matrix where the dimension depend on the context, then

\begin{align*}
	\mathrm{D}	\mathfrak{p}(\euq\Zpe,\euq\Ype) &=\euq\begin{pmatrix*}\mathrm{Id} +\mathrm{D}_z p(\euq\Zpe,\euq\Ype) &  \mathrm{D}_y p(\euq\Zpe,\euq\Ype) \\
		0 &  \mathrm{Id} \end{pmatrix*}\\
	& = \euq\big(\mathrm{Id} + A(\euq\Zpe,\euq\Ype)\big)
\end{align*}
where
\begin{equation*}
	A(z',y')=	\begin{pmatrix*} \mathrm{D}_z p(z',y') & \mathrm{D}_y p(z',y') \\
		0 & 0 \end{pmatrix*}
\end{equation*}

As long as $\Big(\euq\Zpe(t),\euq\Ype(t)\Big)$ remains close to the origin, define  $\mathfrak{p}^{-1}$ as the inverse function of $\mathfrak{p}$, then
\begin{align*}
	D(	\mathfrak{p})^{-1}\big(\euq\Ze,\euq\Ye\big) & =\big(D\mathfrak{p}\big(\euq\Zpe,\euq\Ype\big)\big)^{-1}\\
	&=\euq\Big(\mathrm{Id} - A \big(\euq\Zpe,\euq\Ype\big)+ \frac{1}{2} A^2\big(\euq\Zpe,\euq\Ype\big) \\
	&\qquad + R_A\big(\euq\Zpe,\euq\Ype\big)\Big),
\end{align*}
where $R_A$ is the remainder of the Taylor expansion around zero that, for fixed $(z',y')$, satisfies $$\lim_{\epsilon\to 0}\emm R_A^\epsilon(\euq z',\euq y')=0.$$

Here we have used the fact that $p(z',y')$ is a polynomial of degree two, so 
\[
A \big(\euq\Zpe,\euq\Ype\big) = O(\epsilon^{\frac14}).
\]

Then

\begin{align}\label{eq:newvariables1}
	&\begin{aligned}
		\de\Zpe =  \Big[& \emm Q\Zpe+f^{(3)}(\Zpe,\Ype)+\emq f^{(2)}(\Ype) + \etq r_f(\euq\Zpe,\euq\Ype)  \\ 
		& + \frac{1}{2}\eum r_{p,\sigma}(\euq\Zpe,\euq\Ype) \Big]\de t+ \sigma_Q'\big(\euq\Zpe,\euq\Ype\big)\de B,
	\end{aligned}\\ \label{eq:newvariables2}
	&\de \Ype = \Big[\emm P\Ype+\epsilon^{-\frac{3}{4}}g\big(\epsilon^{\frac{1}{4}}\Zpe,\epsilon^{\frac{1}{4}}\Ype\big)\Big]\de t + \sigma_P'\big(\epsilon^{\frac{1}{4}}\Zpe,\epsilon^{\frac{1}{4}}\Ype\big)\de B. 
\end{align} 
where 
\begin{enumerate}[i)]
	\item functions $f^{(3)}, f^{(2)}, r_f$ are defined in \eqref{eq:Carr1prime};
	\item $r_{p,\sigma}$ is a term coming from the quadratic variation in the Ito's formula; the function $r_{p,\sigma}( \, \cdot \, , \, \cdot \,)$ is locally bounded around $(0,0)$.
	\item $\sigma_Q'\big(z,y\big)=\Big(\mathrm{Id} - A(z,y) + \frac{1}{2} A^2(z,y) + R_A(z,y)\Big)\sigma_Q\big(\mathfrak{p}(z,y\big))$;
	\item $\sigma_{P}'\big(z,y\big) = \sigma_P\big(\mathfrak{p}(z,y\big))$.
\end{enumerate}

Let $h:\mathbb{R}^2\to\mathbb{R}^{n-2}$ the function that describes the centre manifold associated to the Hopf's bifurcation, then consider the 2-dimentional SDE 

\begin{equation} \label{eq:centre3}
	\tZe = \Big[\emm Q\tZe+f^{(3)}\big(\tZe,\emq h(\euq \tZe)\big) \Big]\de t + \sigma_{Q}\big(\euq\tZe,\emq h(\euq \tZe)\big)\de B. 
\end{equation}

Let $\mathfrak{q}$ be the transformation defined in \eqref{diff:q}  and take $\tZpe$ such that 
\begin{equation*}
	\mathfrak{q}\big(\euq\tZpe(t)\big) = \euq\tZpe(t) + q\big(\euq\tZpe(t)\big) = \euq\tZe(t).
\end{equation*}
As long as $\euq\tZpe(t)$ remains close to the origin, following the theory of normal form, it holds
\begin{equation} \label{eq:Z1Z2}
	\begin{aligned}
		\de \tZpe_1 &=  \Big[ -\emm\lambda_0\tZpe_2- \tZpe_1\big((\tZpe_1)^2+(\tZpe_2)^2\big)+\etq f^{(4)}_1\big(\euq\tZpe\big) \\
		&\quad  + \frac{1}{2}\eum r_{q,\sigma,1}(\euq\tZpe)  \Big]\de t +  \Big( \sigma_{Q}'' \big(\euq\tZe\big)\de B\Big)_1,\\
		\de \tZpe_2 &=  \Big[ +\emm\lambda_0\tZpe_1- \tZpe_2\big((\tZpe_1)^2+(\tZpe_2)^2\big)+\etq f^{(4)}_2\big(\euq\tZpe\big) \\
		&\quad + \frac{1}{2}\eum r_{q,\sigma,2}(\euq\tZpe)\Big]\de t + \Big(\sigma_{Q}'' \big(\euq\tZe\big)\de B\Big)_2,
	\end{aligned}
\end{equation}
where 
\begin{enumerate}[i)]
	\item function $f^{(4)}=\big(f^{(4)}_1,f^{(4)}_2\big)^{\mathsf{T}}$ is defined in \eqref{eq:z1z2normalform};
	\item $r_{q,\sigma}=\big(r_{q,\sigma,1},r_{q,\sigma,2}\big)^{\mathsf{T}}$ is a term coming from the quadratic variation in the Ito's formula;	the function $r_{q,\sigma}( \, \cdot \, , \, \cdot \,)$ is locally bounded around $(0,0)$.
	\item $\Big(\sigma_{Q}''\big(\euq\tZe\big)\de B\Big)_i$ is the $i$-th component of the diffusive part obtained as done in \eqref{eq:newvariables1}.
\end{enumerate}

We stress the fact that the above change of variables makes sense only if the processes $\big(\euq\Zpe(t),\euq\Ype(t)\big)_{t\in[0,T]}$ and $\big(\euq\tZpe(t)\big)_{t\in[0,T]}$ remain sufficiently close to the origin. In other words, equations \eqref{eq:Z1Z2} correspond to the original process in the new variables up to the time $\big(\Zpe(t),\Ype(t)\big)$ and $\tZpe(t)$ exit a ball centered in the origin and of radius proportional to $\emq$. When we will use equations \eqref{eq:Z1Z2}, the process will be stopped as it gets to a distance from the origin which is much smaller than $\emq$.

\section{Reduction principle}\label{sec:reduction}

In this section we show that the original $n$ dimensional system is approximated by a $2$ dimensional process that lives on the centre manifold, up to a suitable stopping time.

Define the following processes 
\begin{equation*}
	\pe(t):=\Zpe(t)-\tZpe(t),\quad \ue(t):=\Ype(t)-\emq h\big(\euq\Zpe(t)\big).
\end{equation*}
	
	Given $\Delta>0$, define the stopping time 
	\begin{equation}\label{def:stoppingZYZ}
		\tau_\Delta^\epsilon:=\inf\{t\in[0,T]: \max\{\|\big(\Zpe(t),\Ype(t)\big)^{\mathsf{T}}\|,\|\tZpe(t)\|\}>\Delta\}
	\end{equation} 
	and  the stopped processes:
	\begin{align*}
		\Zpe_\Delta(t) & :=\Zpe\big(t\wedge\tau_\Delta^\epsilon\big) ,\quad \Ype_\Delta(t):=\Ype\big(t\wedge\tau_\Delta^\epsilon\big),\quad \tZpe_\Delta(t) :=\tZpe\big(t\wedge\tau_\Delta^\epsilon\big), \\
		U^\epsilon_\Delta(t) & :=\ue\big(t\wedge\tau_\Delta^\epsilon\big),\quad \Phi^\epsilon_\Delta(t) :=\Phi^\epsilon\big(t\wedge\tau_\Delta^\epsilon\big).
	\end{align*}
	The main results of this section are the following theorems. 
	
	\begin{theorem}\label{teo:ued}
		Let $q\in(0,\frac{1}{4})$, $\beta<\frac{1}{2}$, and  $\Delta_\epsilon$ satisfying 
		$ \lim_{\epsilon \rightarrow 0} \epsilon^{\frac14} \Delta^3_\epsilon = 0 $. 
		Suppose that $\big(\Zpe(0),\Ype(0)\big)=\big(z^{'\epsilon}_0,y^{'\epsilon}_0\big)\to \big(z_0,y_0\big)$, as $\epsilon\to 0$; then, for any $T>0$, 
		\begin{equation*}
			\lim_{\epsilon\to 0}\mathsf{P}\left(\sup_{\epsilon^\beta\leq t \leq T} \|\ued(t)\|>\epsilon^q \right)= 0.
		\end{equation*}
	\end{theorem}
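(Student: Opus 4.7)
The quantity $\ue(t) = \Ype(t) - \emq h(\euq \Zpe(t))$ measures the deviation of the fast coordinate $\Ype$ from the centre manifold of the scaled system. The plan is to derive an SDE for $\ue$ whose leading order is a strong linear contraction at rate $\emm$, coming from the Hurwitz matrix $P$, and then exploit this contraction. On time scales $t\geq\epsilon^\beta\gg \eum$ the contraction will have taken effect and driven $\|\ue\|$ down to the order of $\euq$, which is $o(\epsilon^q)$ for $q<\tfrac14$, with overwhelming probability.

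First I would apply Ito's formula to $\ue$ using equations \eqref{eq:newvariables1}--\eqref{eq:newvariables2}. Substituting $\Ype = \ue + \emq h(\euq \Zpe)$ in the drift of $\Ype$ and exploiting the centre-manifold identity
$$
Dh(\zeta)\bigl[Q\zeta + f^{(3)}(\zeta,h(\zeta)) + f^{(2)}(h(\zeta)) + r_f(\zeta,h(\zeta))\bigr] = Ph(\zeta) + g'(\zeta,h(\zeta))
$$
at $\zeta = \euq \Zpe(t)$, I would see all the singular $\emm$-order terms cancel, obtaining an equation of the form
$$
\de \ue = \emm P\ue\,\de t + F^\epsilon(t)\,\de t + G^\epsilon(t)\,\de B(t),
$$
where $G^\epsilon = \sigma'_P - Dh(\euq\Zpe)\sigma'_Q$ is uniformly bounded on the stopped interval and $F^\epsilon$ collects only remainder terms: on the one hand a Lipschitz remainder $\etq[g(\zeta, h(\zeta)+\euq\ue) - g(\zeta, h(\zeta))]$ which is linear in $\ue$ with coefficient of order $\emq\Delta_\epsilon$ (using $Dg(0,0)=0$), and on the other hand several $\ue$-independent remainders whose sup on the stopped interval is $o(\emq)$ under the hypothesis $\euq\Delta_\epsilon^3\to 0$.

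Next I would use the variation-of-constants formula
$$
\ue(t) = e^{\emm Pt}\ue(0) + \int_0^t e^{\emm P(t-s)}F^\epsilon(s)\,\de s + \int_0^t e^{\emm P(t-s)}G^\epsilon(s)\,\de B(s) =: J_1^\epsilon + J_2^\epsilon + J_3^\epsilon.
$$
Since $P$ is Hurwitz, $\|e^{\emm Pt}\|\leq Ce^{-\lambda_P\emm t}$ for some $\lambda_P>0$. For $t\geq\epsilon^\beta$ with $\beta<\tfrac12$, $J_1^\epsilon$ is $O(\exp(-\lambda_P\epsilon^{\beta-1/2}))$, super-polynomially small in $\epsilon$. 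The drift term $J_2^\epsilon$ is bounded by splitting $F^\epsilon$ into its $\ue$-independent part---whose contribution is $o(\eum\cdot\emq) = o(\euq)$ after convolution with the exponential kernel---and its $\ue$-linear part, whose contribution is bounded by $C\euq\Delta_\epsilon\sup_{s\leq t}\|\ue(s)\|$; since $\euq\Delta_\epsilon\to 0$, this self-referential term is absorbed by a Gronwall closure.

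The main obstacle is controlling $\sup_{t\in[\epsilon^\beta,T]}\|J_3^\epsilon(t)\|$, because $J_3^\epsilon$ is not a martingale in $t$ (the kernel depends on $t$) and Doob's inequality does not apply directly. I would view $J_3^\epsilon$ as the solution of the linear SDE $\de J_3^\epsilon = \emm P J_3^\epsilon\,\de t + G^\epsilon\,\de B$ and, introducing a positive-definite $M$ solving $P^\mathsf{T}M + MP = -\mathrm{Id}$, apply Ito to $\|J_3^\epsilon\|_M^2$, giving
$$
\de\|J_3^\epsilon\|_M^2 = -\emm\|J_3^\epsilon\|^2\,\de t + \operatorname{Tr}(MG^\epsilon G^{\epsilon\mathsf{T}})\,\de t + 2\langle MJ_3^\epsilon, G^\epsilon\,\de B\rangle.
$$
A Gronwall argument on the deterministic part gives $\mathsf{E}\|J_3^\epsilon(t)\|^2 \leq C\eum$, matching the stationary variance of the fast OU process. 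To upgrade this moment bound to a uniform-in-time statement, I would discretize $[\epsilon^\beta,T]$ into intervals of length $\eum|\log\epsilon|$, control the sup on each interval via a Doob/exponential-martingale inequality applied to the above local martingale, and take a union bound over the $O(\eum^{-1}|\log\epsilon|^{-1})$ intervals. This yields $\sup_{t\in[\epsilon^\beta,T]}\|J_3^\epsilon(t)\| = O(\euq\sqrt{|\log\epsilon|})$ with probability tending to $1$, hence $o(\epsilon^q)$ for every $q<\tfrac14$. Combining the three bounds on the stopped process then gives Theorem \ref{teo:ued}.
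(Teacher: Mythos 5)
Your proposal is correct in outline and matches the paper's decomposition up to and including the variation-of-constants formula (which the paper calls equation (4.7)): the centre-manifold invariance identity cancels the singular $\emm$-order terms, the initial-data term $J_1^\epsilon$ is killed by $e^{-\lambda^* \emm t}$ on $t\geq\epsilon^\beta$ since $\beta<\tfrac12$, the $\ue$-linear part of the drift has small coefficient $\emq\Delta_\epsilon$ after convolution with the kernel, and this self-referential piece is closed by a Gronwall/stopping-time argument. Where you diverge from the paper is in the treatment of the non-martingale convolution integral $J_3^\epsilon(t)=\int_0^t e^{\emm P(t-s)}\sigma_{hP}(s)\,\de B(s)$, which is the technical core of the argument. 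The paper attacks it with the Garsia--Rodemich--Rumsey lemma: it introduces the auxiliary two-parameter process $M_u^\epsilon(t',t)$, uses It\^o's rule in $t'$ to get polynomial-in-$\epsilon$ moment increment bounds $\mathsf{E}\|M_u^\epsilon(t)-M_u^\epsilon(s)\|^{2m}\leq C_m l_\epsilon^m(t-s)$, then feeds these into GRR to bound $\sup_t\|M_u^\epsilon(t)\|$ by a power of $\epsilon$. You instead recognize $J_3^\epsilon$ as the solution of the linear fast OU equation $\de J_3^\epsilon = \emm P J_3^\epsilon\,\de t + G^\epsilon\,\de B$, build a Lyapunov function $\|J_3^\epsilon\|_M^2$ from the Lyapunov equation $P^{\mathsf{T}}M+MP=-\mathrm{Id}$, obtain the stationary-scale bound $\mathsf{E}\|J_3^\epsilon(t)\|^2 \lesssim \eum$, and then upgrade to a uniform bound by discretizing $[\epsilon^\beta,T]$ into $O(\emm/|\log\epsilon|)$ blocks of the relaxation length $\eum|\log\epsilon|$, applying an exponential martingale inequality on each block, and union bounding; this yields $\sup_t\|J_3^\epsilon\| = O(\euq\sqrt{|\log\epsilon|})$, which is $o(\epsilon^q)$ for any $q<\tfrac14$. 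Both routes are valid. GRR gives you chaining-style moment bounds that are self-contained and directly comparable to the algebraic powers of $\epsilon$ used elsewhere in the paper, and avoids any explicit discretization; your Lyapunov/block approach is more transparent about the mechanism (the martingale term lives at the stationary scale $\euq$ of the fast OU process) and is arguably the more standard probabilistic way to get uniform-in-time control of a stable linear SDE at its natural scale. One caveat you gloss over: when union bounding over the grid points $t_i$, the $L^2$ bound on $\|J_3^\epsilon(t_i)\|$ is not enough — you need exponential tails at each $t_i$, which you obtain by freezing $t_i$ and treating $t'\mapsto\int_0^{t'}e^{\emm P(t_i-s)}G^\epsilon\,\de B$ as a martingale with quadratic variation $O(\eum)$; this works because $G^\epsilon$ is bounded on the stopped interval, but it is a step that should be made explicit rather than absorbed into ``Doob/exponential-martingale inequality.''
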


	This theorem shows that, in time interval $[\epsilon^{\beta},\tau_{\Delta_\epsilon}^\epsilon\wedge T]$, the process  $\Ype$ can be approximated by the process $\emq h\big(\euq\Zpe\big)$.
	The initial time $\epsilon^{\beta}$ is needed as the initial condition $(z_0,y_0)$ may be outside the center manifold, so the process needs a short time to get close to the center manifold.
	The proof is postponed to subsection \ref{subsection:ude}.

	\begin{theorem}\label{teo:ped}
		Take  $\gamma\in(0,\frac{1}{8})$ and $\Delta_\epsilon=\big[\ln(-\ln\epsilon)\big]^{1/2}$. Suppose that $\big(\Zpe(0),\Ype(0)\big)=\big(z^{'\epsilon}_0,y^{'\epsilon}_0\big)\to \big(z_0,y_0\big)$, as $\epsilon\to 0$, then, for any $T\geq0$
		\begin{equation*}
			\lim_{\epsilon\to 0}\mathsf{P}\left(\sup_{0\leq t\leq T}\|\ped(t)\|> \epsilon^\gamma\right)=0.
		\end{equation*}
	\end{theorem}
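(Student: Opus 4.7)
The plan is to introduce the 2-dimensional centre-manifold process $\tZe$ solving \eqref{eq:centre3} with $\tZe(0)=\Zpe(0)$, and to triangulate
\[
\Zpe - \tZpe = (\Zpe - \tZe) + (\tZe - \tZpe).
\]
The second difference is pathwise given by $\tZe - \tZpe = q(\euq\tZpe)$, and since $q$ is a quadratic polynomial this is $O(\eum \Delta_\epsilon^2)$ on $\{t \leq \tau_{\Delta_\epsilon}^\epsilon\}$, already much smaller than $\epsilon^\gamma$ for any $\gamma<1/2$. So the task reduces to estimating the stopped difference $D^\epsilon_\Delta := \Zpe_\Delta - \tZe_\Delta$.

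To estimate $D^\epsilon$ I would apply It\^o's formula to $\|D^\epsilon_\Delta\|^2$. The key observation is that the $O(\emm)$ rotation term $\emm Q D^\epsilon$ disappears from this computation because $Q = \bigl(\begin{smallmatrix} 0 & -\lambda_0 \\ \lambda_0 & 0 \end{smallmatrix}\bigr)$ is antisymmetric. The remaining drift and diffusion differences split into three groups: (a) the principal cubic difference $f^{(3)}(\Zpe,\Ype) - f^{(3)}(\tZe,\emq h(\euq\tZe))$, which on the stopping event has local Lipschitz constant $O(\Delta_\epsilon^2)$; inserting $\emq h(\euq\Zpe)$ as a pivot and applying Theorem~\ref{teo:ued} controls the ``$\Ype$-side'' contribution by $\|\ue\|=O(\epsilon^q)$ once the initial layer $[0,\epsilon^\beta]$ is split off; (b) the explicit perturbation terms $\emq f^{(2)}(\Ype)$, $\etq r_f(\euq\Zpe,\euq\Ype)$ and $\eum r_{p,\sigma}(\euq\Zpe,\euq\Ype)$, which are all positive powers of $\epsilon$ times polynomials in $\Delta_\epsilon$ once the bound $\|\Ype\| \leq \|\ue\| + \|\emq h(\euq\Zpe)\| = O(\epsilon^q + \euq\Delta_\epsilon^2)$ is substituted; (c) the diffusion mismatch $\sigma_Q'(\euq\Zpe,\euq\Ype) - \sigma_Q(\euq\tZe,\emq h(\euq\tZe))$, which is $O(\euq\Delta_\epsilon)$ because both coefficients agree with $\bar{\sigma}$ at the origin and vary smoothly. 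Combining a Burkholder--Davis--Gundy estimate for the martingale part with AM--GM on all cross terms, I expect an inequality
\begin{equation*}
    \mathsf{E}\Bigl[\sup_{0\leq s \leq t}\|D^\epsilon_\Delta(s)\|^2\Bigr]
    \leq C\,\epsilon^{2q}\Delta_\epsilon^{c} + C\,\Delta_\epsilon^2 \int_0^t \mathsf{E}\Bigl[\sup_{0\leq r \leq s}\|D^\epsilon_\Delta(r)\|^2\Bigr] \dee s,
\end{equation*}
to which Gronwall's lemma gives $\mathsf{E}\bigl[\sup_{[0,T]}\|D^\epsilon_\Delta\|^2\bigr] \leq C\,\epsilon^{2q}\Delta_\epsilon^c \exp\bigl(CT\Delta_\epsilon^2\bigr)$.

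The short initial layer $[0,\epsilon^\beta]$, on which Theorem~\ref{teo:ued} does not apply, is handled separately by a direct moment bound: again the rotation preserves norms, while the other drifts and the diffusion contribute at most $O(\epsilon^\beta \Delta_\epsilon^c + \epsilon^{\beta/2})$ over such a short interval, which is dominated by $\epsilon^\gamma$ provided $\beta/2 > \gamma$ (achievable for $\gamma < 1/8$ with $\beta<1/2$ as in Theorem~\ref{teo:ued}). The main obstacle, and the whole reason for the peculiar choice $\Delta_\epsilon = [\ln(-\ln\epsilon)]^{1/2}$, is taming the Gronwall exponential: with this choice $\exp(CT\Delta_\epsilon^2) = (-\ln\epsilon)^{CT}$ is merely polylogarithmic in $1/\epsilon$ and is absorbed by the factor $\epsilon^{2q}$. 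Picking $q \in (\gamma, 1/4)$ sufficiently close to $1/4$, a Markov inequality then yields $\mathsf{P}\bigl(\sup_{[0,T]}\|\ped\|>\epsilon^\gamma\bigr) \to 0$. One must finally check that the scaled processes remain in the domain of validity of the local diffeomorphisms $\mathfrak{p}$ and $\mathfrak{q}$ up to the stopping time; this holds because $\euq\Delta_\epsilon \to 0$ keeps $\euq\Zpe$ and $\euq\tZe$ in an arbitrarily small neighborhood of the origin where all the normal-form expansions used above are valid.
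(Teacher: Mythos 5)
Your proposal follows the same essential line as the paper's proof: apply It\^o's formula to the squared norm of the difference process, exploit the antisymmetry $Q + Q^{\mathsf T} = 0$ to kill the $O(\emm)$ rotation term, control the remaining drift and diffusion differences on $\{t \leq \tau_{\Delta_\epsilon}^\epsilon\}$ using Theorem~\ref{teo:ued} for the $\Ype$-side, close via Gronwall, and rely on the slow cutoff $\Delta_\epsilon = [\ln(-\ln\epsilon)]^{1/2}$ so that the Gronwall exponential is merely $(-\ln\epsilon)^{CT}$. The explicit split $\Zpe - \tZpe = (\Zpe - \tZe) + (\tZe - \tZpe)$ with $\tZe - \tZpe = \emq q(\euq\tZpe) = O(\euq\Delta_\epsilon^2)$ is a clean bookkeeping device and is implicitly what the paper's SDE \eqref{eq:pedNot} encodes (note the power here is $\euq$, not $\eum$ as you wrote, but both are negligible against $\epsilon^\gamma$).

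The one genuine interface issue is that your Gronwall estimate is phrased in expectation, $\mathsf{E}\bigl[\sup\|D^\epsilon_\Delta\|^2\bigr]$, while Theorem~\ref{teo:ued} delivers only a probability bound $\mathsf{P}\bigl(\sup_{\epsilon^\beta\leq t\leq T}\|\ued(t)\|>\epsilon^q\bigr)\to 0$. To feed $\|\ue\| = O(\epsilon^q)$ into a moment estimate you would need a moment version of Theorem~\ref{teo:ued}, or you would have to control $\mathsf{E}\bigl[\|\ued\|^2\mathbbm{1}_{\mathrm{bad}}\bigr]$ separately (on the bad event $\|\ued\|$ can be as large as $2\Delta_\epsilon$), which requires extracting a decay rate from the proof of Theorem~\ref{teo:ued} rather than from its statement. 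The paper sidesteps this by running the Gronwall argument \emph{pathwise} on the intersection of two high-probability events: the complement of the martingale event $A_n^\epsilon = \{\sup_t |M_n^\epsilon(t)|>\epsilon^\alpha\}$ (with $\mathsf{P}(A_n^\epsilon) \leq C\epsilon^{2(1/4-\alpha)}\Delta_\epsilon^2$ by BDG and Markov) and the good event of Theorem~\ref{teo:ued}, obtaining a deterministic bound $\|\ped(t)\|^2\leq 2\epsilon^{2\gamma}$ there, while the probability of the complement vanishes. You should convert your final step to this pathwise form; otherwise the architecture is the same as the paper's.
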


	This theorem shows that, in time interval $[0,\tau_{\Delta_\epsilon}^\epsilon\wedge T]$, the 2-dimensional process  $\Zpe$ can be approximated by the 2-dimensional process $\tZpe$. The proof is postpone to subsection \ref{subsection:pde}. \\
	
	With an abuse of notation in what follows the solutions to the equations \eqref{eq:newvariables1}, \eqref{eq:newvariables2} are called  $(\Ze,\Ye)$, the symbol $'$ is omitted also for $\tZpe$. Using Ito's formula the evolution of the processes $\ue$ is  described by:
	
	\begin{equation}\label{eq:uedNot}
		\begin{aligned}
			\de \ue = &  \Bigl[\emm P\ue  +\etq N_{\epsilon}\big(\euq\Ze,\euq\Ye\big) +\eum H\big(\euq\Ze,\euq\Ye\big) \Bigr] \de t \\
			& \quad +\sigma_{hP}\big(\euq\Ze,\euq\Ye\big)\de B
		\end{aligned}
	\end{equation}
	where 
	\begin{align} \label{newH}
		&\begin{aligned}
			N_{\epsilon}\big(z,\y\big) =  & g\big(z,y\big)-g\big(z, h\big(z\big)\big) \\
			&  +\etqp\textrm{D}h\big(z\big) \Big[F^\epsilon\big(z, h\big(z\big)\big)-F^\epsilon\big(z,\y\big)\Big],
		\end{aligned}\\
		& F^\epsilon(z,y) =\, \etq f^{(3)}(z,y)+\etq f^{(2)}(y)  
		+ \etq r_f(z,y) + \frac{1}{2}\eum r_{p,\sigma}(z,y),  \nonumber\\
		& H\big(z,y\big) = \frac{1}{2}\textrm{Tr}\Big(\sigma_P^{\mathsf{T}}\big(z,y\big)(D^2h)\big(z\big)\sigma_P\big(z,y\big)\Big), \nonumber\\
		& \sigma_{hP}\big(z,y\big) = \sigma_P\big(z,y) - \textrm{D}h\big(z\big)\sigma_Q\big(z,y\big).\nonumber
	\end{align}
	
	Applying Ito's formula the evolution of the process $\pe$ is described by 
	\begin{equation}\label{eq:pedNot}
		\begin{aligned}
			\de \pe  = & \Bigl[\emm Q\pe +\emq f^{(2)}(\Ye) +  G_\epsilon\big(\Ze,\Ye,\tZe\big) + R_\epsilon(\euq\Ze,\euq\Ye)  \Bigr]\de t \\
			& +\sigma_{hQ}\big(\euq\Ze,\euq\Ye,\euq\tZe\big)\de B,
		\end{aligned}
	\end{equation}
	where 
	
	\begin{align*} 
		&G_\epsilon\big(z,y,\tilde{z} \big)  =  f^{(3)}\big(z,y\big)-f^{(3)}\big(\tilde{z},\emq h\big(\euq \tilde{z}\big)\big), \\
		&R_\epsilon(z,y)=\etq r_f(z,y)+ \frac{1}{2}\eum r_{p,\sigma}(z,y),  \\ 
		&\sigma_{hQ}\big(z,y,\tilde{z}\big) = \sigma_Q\big(z,y\big) - \sigma_Q\big(\tilde{z},h\big(\tilde{z}\big)\big).
	\end{align*}
	
	Using the definition of $\ue$, $\pe$, and $\tZe$ the equations \eqref{eq:uedNot} and \eqref{eq:pedNot} can be written as 
	\begin{align} \label{eq:uedZ}
		&\begin{aligned}
			\de \ue & = \Bigg[\emm P\ue + \etq N\Big(\euq(\pe+\tZe),\euq\ue+h\big(\euq(\pe+\tZe)\big)\Big)  \\ 
			&\qquad +\frac{1}{2}\eum H\Big(\euq(\pe+\tZe),\euq\ue+h\big(\euq(\pe+\tZe)\big)\Big) \Bigg]\de t \\ 
			&\quad +\sigma_{hP}\Big(\euq(\pe+\tZe),\euq\ue+h\big(\euq(\pe+\tZe)\big)\Big)\de B;
		\end{aligned} \\ \label{eq:pedZ}
		&\begin{aligned}
			\de \pe & = \Bigg[\emm Q\pe +\emq f^{(2)}\Big(\ue+\emq h\big(\euq(\pe+\tZe)\big)\Big)\\ 
			&\qquad + G_\epsilon\Big(\pe+\tZe,\ue+\emq h\big(\euq(\pe+\tZe)\big),\tZe\Big) \\ 
			&\qquad + R_\epsilon\Big(\euq(\pe+\tZe),\euq\ue+h\big(\euq(\pe+\tZe)\big)\Big) \Bigg]\de t \\ 
			&\quad + \sigma_{hQ}\Big(\euq(\pe+\tZe),\euq\ue+h\big(\euq(\pe+\tZe)\big),\euq\tZe\Big)\de B.  
		\end{aligned}
	\end{align}

	
	\subsection{Proof Theorem \ref{teo:ued} }\label{subsection:ude}

	Set $u_0^\epsilon=y^{\epsilon}_0-\emq h(\euq z^{\epsilon}_0)$. By applying the variation of constants to equation \eqref{eq:uedZ} for the stopped process $\ued$, for any $t\in[0,T\wedge\tau_{\Delta_\epsilon}^\epsilon]$ we obtain:
	\begin{equation}\label{eq:varcostU}
		\begin{aligned}
			\ued(t) = & e^{\emm Pt}u_0^\epsilon	\\
			&+\int_0^{t \wedge \tau_{\Delta_\epsilon}^\epsilon}  e^{\emm P(t-r)} \Bigg[\etq N\Big(\euq(\ped+\tZed),\euq\ued+h\big(\euq(\ped+\tZed)\big)\Big)  \\ 
			&\qquad	+ \eum H\Big(\euq(\ped+\tZed),\euq\ued+h\big(\euq(\ped+\tZed)\big)\Big)\Bigg]\de r \\  
			&+\int_0^{t \wedge \tau_{\Delta_\epsilon}^\epsilon} e^{\emm P(t-r)}\sigma_{hP}\Bigl(\euq(\ped+\tZed),\euq\ued+h\big(\euq(\ped+\tZed)\big) \Bigr)\de B(r).
		\end{aligned}
	\end{equation}
	For convenience of notation, the time is sometimes omitted. 
	
	We estimate separately the three terms in the r.h.s. of \eqref{eq:varcostU} produced by the variation of constants.
	
	\textit{STEP 1:} The bound
	
	\[
	\left\|\int_0^{t \wedge \tau_{\Delta_\epsilon}^\epsilon}   e^{\emm P(t-r)}\eum H\Big(\euq(\ped+\tZed),\euq\ued+h\big(\euq(\ped+\tZed)\big)\Big)\de r \right\|  \leq C T \epsilon^{\frac12},
	\]
	for some $C>0$, follows readily from the fact that the function $H$ is locally bounded. \\

	\textit{STEP 2:} We now estimate the contribution of the martingale term in \eqref{eq:varcostU}.
	Define, for any $0\leq t'<t\leq T\wedge\tau_{\Delta_\epsilon}^\epsilon$, 
	\begin{equation*}
		M_u^\epsilon(t',t)=\int_0^{t'} e^{\emm P(t-r)}\sigma_{hP}(r)\de B(r).
	\end{equation*}
	where 
	\[
	\sigma_{hP}(r)=\sigma_{hP}\Bigl(\euq(\ped(r)+\tZed(r)),\euq\ued(r)+h\big(\euq(\ped(r)+\tZed(r))\big) \Bigr).
	\]
	We write  $M_u^\epsilon(t)$ for $M_u^\epsilon(t,t)$. The reason for introducing the time $t'$ will be clear later: in moment estimates we will use the Ito's rule for $M_u^\epsilon(t',t)$ as function on $t'$ only, thus avoiding differentiation of the exponential $e^{\emm P(t-r)}$.
	
	The dependence of $\sigma_{hP}$ on the variables $\ued,\ped$, and $\tZed$ is omitted to make the following calculations easier to read.\\
	Given $s',s>0$ such that $s'< s\leq t $ and $s'\leq t'$ 
	\begin{equation}\label{differenceMu}
		\begin{aligned}
			M_u^\epsilon(t',t)-M_u^\epsilon(s',s)  = & \int_{s'}^{t'} e^{\emm P(t-r)}\sigma_{hP}(r)\de B(r) \\ 
			& +  \int_{0}^{s'} \left( e^{\emm P(t-r)} - e^{\emm P(s-r)}\right) \sigma_{hP}(r) \de B(r).
		\end{aligned}
	\end{equation}
	The second moment of \eqref{differenceMu} is bounded by
	\begin{equation*}
		\Expect	\left[\|M_u^\epsilon(t',t)-M_u^\epsilon(s',s)\|^2\right]\leq \Expect[I^\epsilon(t,s',t')]+\Expect[\tilde{I}^\epsilon(t,s,s')],
	\end{equation*}
	where 
	\begin{align*}
		\Expect[I^\epsilon(t,s',t')] & = 2 \int_{s'}^{t'} \Expect\Bigg[\mathrm{Tr}\left(e^{\emm P(t-r)}\sigma_{hP}(r) \sigma_{hP}^{\mathsf{T}}(r)\big(e^{\emm P(t-r)} \big)^{\mathsf{T}}\right)\Bigg]\de r,\\
		\Expect[ \tilde{I}^\epsilon(t,s,s')] & = 2 \int_{0}^{s'} \Expect\Bigg[\mathrm{Tr}\left(\big(e^{\emm P(t-r)}- e^{\emm P(s-r)}\big)\sigma_{hP} \sigma_{hP}^{\mathsf{T}}\big(e^{\emm P(t-r)} - e^{\emm P(s-r)} \big)^{\mathsf{T}}\right)\Bigg]\de r.
	\end{align*}
	Since the eigenvalues of $P$ have negative real parts, there exist positive constants $\lambda^*$ and $C^*$ such that for any $r>0$ and $y\in\mathbb{R}^{n-2}$, $\|e^{rP}y\|\leq C^*e^{-\lambda^*r}\|y\|$, see \cite{carr2012applications} equation (2.3.6). Thus
	\begin{align}\nonumber
		\Expect[I^\epsilon(t,s',t')]& \leq 2(n-2)C^*\int_{s'}^{t'}\|\sigma_{hP}\|_{\infty,1}^2 e^{-2\lambda^*\emm (t-r)}\de r\\ \nonumber
		&\quad = \underbrace{\frac{(n-2)C^*\|\sigma_{hP}\|_{\infty,1}^2}{\lambda^*}}_{K_I}\eum \left(e^{-2\lambda^*\emm (t-t')}-e^{-2\lambda^*\emm (t-s')}\right)\\ \nonumber
		&\leq K_I \eum \left(1-e^{-2\lambda^*\emm (t-s')}\right)\\ \label{ineq:ExI}
		&\leq K_I \eum \left(\emm (t-s')\mathbbm{1}_{\emm (t-s')\leq \epsilon^\alpha}+\mathbbm{1}_{\emm (t-s')> \epsilon^{\alpha}} \right),
	\end{align}
	where $\alpha>0$ and  $\|\cdot\|_{\infty,1}$ is the sup-norm restricted to the ball of radius $1$. Here we have used the fact that $\sigma_{hP}$ is locally bounded, and evaluated in processes that are stopped in a ball of radius $\Delta_\epsilon$
	\\
	To estimate $\Expect[ \tilde{I}^\epsilon(t,s,s')] $, for $y\in\mathbb{R}^{n-2}$ write 
	$$\left(e^{\emm P(t-r)}- e^{\emm P(s-r)}\right)y=\left(e^{\emm P(s-r)}\big(e^{\emm P(t-s)}-\Id\big)\right)y.$$ 
	Take $\alpha>0$ 
	and consider two cases: either $\emm (t-s')\leq \epsilon^\alpha$  or $\emm (t-s')> \epsilon^\alpha$
	\begin{align*}
		{\bf 1)}\quad \Bigg(e^{\emm P(s-r)}&\Big(e^{\emm P(t-s)}-\Id\Big)\Bigg)y\mathbbm{1}_{\emm (t-s')\leq \epsilon^\alpha}\\
		&=e^{\emm P(s-r)}\big(-\emm(t-s)P + r_P\big(\emm (t-s)\big)\Id\big)y\mathbbm{1}_{\emm (t-s')\leq \epsilon^\alpha}
	\end{align*}
	where $\|r_P(\emm (t-s))\|=o\big(\emm(t-s)\big)$;
	\begin{equation*}
		{\bf 2)} \quad \left\|\left(e^{\emm P(s-r)}\big(e^{\emm P(t-s)}-\Id\big)\right)y\mathbbm{1}_{\emm (t-s')> \epsilon^\alpha}\right\|\leq K_1 \|y\|\mathbbm{1}_{\emm (t-s')> \epsilon^\alpha}
	\end{equation*}
	where $K_1$ is positive constant independent on $\epsilon$.\\ 
	It follows that the operator norm of $e^{\emm P(s-r)}\big(e^{\emm P(t-s)}-\Id\big)$ can be bounded:
	\begin{equation}\label{ineq:operatornorm}
		\begin{aligned}
			\big\|e^{\emm P(s-r)}&\big(e^{\emm P(t-s)}-\Id\big)\big\|  \leq e^{-\emm \lambda^*(s-r)}\Big( K_1\mathbbm{1}_{\emm (t-s') > \epsilon^\alpha}\\ 
			& + K_2\Big(\emm(t-s) +o\big(\emm(t-s)\big)\Big )\mathbbm{1}_{\emm (t-s')\leq \epsilon^\alpha}   \Big),
		\end{aligned}
	\end{equation}
	where $K_2$ is a positive constant independent on $\epsilon$. Using \eqref{ineq:operatornorm} the expected value $\Expect[ \tilde{I}^\epsilon(t,s,s')]$  can be bounded as follows:
	
	\begin{align} \nonumber
		\Expect[ \tilde{I}^\epsilon(t,s,s')]  & \leq 2(n-2)C^*\|\sigma_{hP}\|_{\infty,1}^2\int_{0}^{s'}e^{-2\lambda^*\emm (s-r)} \de r \Big(  K_1\mathbbm{1}_{\emm (t-s')> \epsilon^\alpha} \\ \nonumber
		& \quad +K_2\big(\emm(t-s)+o\big(\emm(t-s)\big) \big)\mathbbm{1}_{\emm (t-s')\leq \epsilon^\alpha} \Big)^2\\ \nonumber
		&	\leq C_1 \eum \Big(K_1^2\mathbbm{1}_{\emm (t-s')> \epsilon^\alpha} \\ \nonumber
		& \quad +K_2^2\big(\emm(t-s)+o\big(\emm(t-s)\big)\big)^2\mathbbm{1}_{\emm (t-s')\leq \epsilon^\alpha} \Big)\\ \label{ineq:ExII}
		&	\begin{aligned} \leq &\, \tilde{K}_1\eum \mathbbm{1}_{\emm (t-s')> \epsilon^\alpha} \\
			&+\tilde{K}_2\Big(\emm(t-s)^2+o\big(\emm(t-s)^2\big)\Big)\mathbbm{1}_{\emm (t-s')\leq \epsilon^\alpha},
		\end{aligned}
	\end{align}
	
	where $C_1,\tilde{K}_1$ and $\tilde{K}_2$ are positive constants independent on $\epsilon$. Inequalities \eqref{ineq:ExI} and \eqref{ineq:ExII} give 
	\begin{multline*} 
		\Expect	\big[\|M_u^\epsilon(t',t)-M_u^\epsilon(s',s)\|^2\big]\leq C \left[ \eum \mathbbm{1}_{\emm (t-s')> \epsilon^\alpha} \right.\\
		\left. + \Big( (t-s')+\emm(t-s)^2+o\big(\emm(t-s)^2)\big)\Big)\mathbbm{1}_{\emm (t-s')\leq \epsilon^\alpha}\right].
	\end{multline*}
	
	We now need to extend the above inequality to higher moments:
	
	\begin{equation}\label{ineq:functionl}
		\begin{aligned}
			\Expect	\big[\|&M_u^\epsilon(t',t)-M_u^\epsilon(s',s)\|^{2m}\big]\leq C_m \Big[ \epsilon^{m/2}\mathbbm{1}_{\emm (t-s')> \epsilon^\alpha} \\
			&+ \Big( (t-s')^m+\epsilon^{-m/2}(t-s)^{2m}+o\big(\epsilon^{-m/2}(t-s)^{2m})\big)\Big)\mathbbm{1}_{\emm (t-s')\leq \epsilon^\alpha}\Big].
		\end{aligned}
	\end{equation}
	
	We now show inequality \eqref{ineq:functionl} for $m=2$. The same argument allows to extend inductively the estimates for larger $m$. Using \eqref{differenceMu}:
	\begin{equation} \label{m12}
		\begin{aligned}
			\Expect	\big[\|M_u^\epsilon(t',t)-M_u^\epsilon(s',s)\|^{4}\big]\leq &4  \Expect	\big[\|\int_{s'}^{t'} e^{\emm P(t-r)}\sigma_{hP}(r)\de B(r)\|^4\big] \\ 
			& +4 \Expect	\big[\| \int_{0}^{s'} \left( e^{\emm P(t-r)} - e^{\emm P(s-r)}\right) \sigma_{hP}(r) \de B(r)\|^4\big].
		\end{aligned}
	\end{equation}
	
	Call $\tilde{M}_{t'}=\int_{s'}^{t'} e^{\emm P(t-r)}\sigma_{hP}(r)\de B(r)$ then using Ito's formula (here is where the time $t'$ becomes convenient)
	\begin{equation*}
		\begin{split}
			\|\tilde{M}_{t'}\|^4= & \int_{s'}^{t'}4\|\tilde{M}_{r}\|^2\tilde{M}_{r}^{\mathsf{T}}\de \tilde{M}_{r}+ 4\int_{s'}^{t'} \left\|\tilde{M}_{r}^{\mathsf{T}}e^{\emm P(t-r)}\sigma_{hP}(r)  \right\|^2 \de r \\ & + 2\int_{s'}^{t'}\|\tilde{M}_{r}\|^2\mathrm{Tr}\Big(e^{\emm P(t-r)}\sigma_{hP}(r) \sigma_{hP}^{\mathsf{T}}(r)\big(e^{\emm P(t-r)} \big)^{\mathsf{T}}\Big)\de r.
		\end{split}
	\end{equation*}
	
	Taking the expected value and noting that 
	\[
	\Expect\big[\|\tilde{M}_{r}\|^2\big] = \Expect[I^\epsilon(t,s',r)]
	\]
	estimated above:
	\begin{align*}
		\Expect	\big[\|\tilde{M}_{t'}\|^4\big]= & 4\int_{s'}^{t'} \Expect	\left[\left\|\tilde{M}_{r}^{\mathsf{T}}e^{\emm P(t-r)}\sigma_{hP}(r) \right\|^2 \right] \de r \\
		+ &2\int_{s'}^{t'}\Expect\Big[\|\tilde{M}_{r}\|^2\mathrm{Tr}\Big(e^{\emm P(t-r)}\sigma_{hP}(r) \sigma_{hP}^{\mathsf{T}}(r)\big(e^{\emm P(t-r)} \big)^{\mathsf{T}}\Big)\Big]\de r\\
		\leq&C_1 \int_{s'}^{t'}e^{-2\lambda^*\emm (t-r)}\Expect\big[\|\tilde{M}_{r}\|^2\big]\de r\\
		\leq C_2 \eum &\left(\emm (t-s')\mathbbm{1}_{\emm (t-s')\leq \epsilon^\alpha}+\mathbbm{1}_{\emm (t-s')> \epsilon^{\alpha}} \right)\int_{s'}^{t'}e^{-2\lambda^*\emm (t-r)}\de r		\\
		\leq&C_3 \epsilon \left(\emm (t-s')\mathbbm{1}_{\emm (t-s')\leq \epsilon^\alpha}+\mathbbm{1}_{\emm (t-s')> \epsilon^{\alpha}} \right)^2\\
		\leq& C_4 \epsilon \left(\epsilon^{-1} (t-s')^2\mathbbm{1}_{\emm (t-s')\leq \epsilon^\alpha}+\mathbbm{1}_{\emm (t-s')> \epsilon^{\alpha}} \right).
	\end{align*}
	for suitable constants $C_1,C_2,C_3,C_4$. The second term in the r.h.s. of \eqref{m12} is estimated with the same technique, using \eqref{ineq:ExII}.

	By Fatou's lemma:
	\begin{equation}\label{estMu}
		\Expect	\left[\|M_u^\epsilon(t)-M_u^\epsilon(s)\|^{2m}\right]\leq \liminf_{\substack{s'\uparrow s\\t'\uparrow t}}\Expect	\left[\|M_u^\epsilon(t',t)-M_u^\epsilon(s',s)\|^{2m}\right] \leq C_m l_\epsilon^m(t-s), 
	\end{equation}
	
	where
	\[
	l_\epsilon^m(t-s) = \Big[ \epsilon^{m/2}\mathbbm{1}_{\emm (t-s)> \epsilon^\alpha} 
	+ \Big( (t-s)^m+\epsilon^{-m/2}(t-s)^{2m}+o\big(\epsilon^{-m/2}(t-s)^{2m})\big)\Big)\mathbbm{1}_{\emm (t-s)\leq \epsilon^\alpha}\Big].
	\]
	
	We now apply  the Garsia, Rademich and Rumsey Lemma in \cite{stroock1997multidimensional}, which is reported below. 
	\begin{lemma}\label{lemma:GRR}
		Let $p$ and $\Psi$ be continuous, strictly increasing functions on $(0, \infty)$ such that $p(0)=\Psi(0)=0$ and $\lim_{t\to\infty} \Psi(t)=\infty$. Given $T>0$ and $\phi$ continuous on $(0, T)$ and taking its values in a Banach space $(E,\|\cdot\|)$, if
		\begin{equation*}
			\int_{0}^T\int_0^T\Psi\left(\frac{\|\phi(t)-\phi(s)\|}{p(|t-s|)}\right)\de s\de t \leq  W <\infty.
		\end{equation*}
		then for $0 \leqslant s \leqslant t \leqslant T$ :
		\begin{equation*}
			\|\phi(t)-\phi(s)\| \leqslant 8 \int_0^{t-s} \Psi^{-1}\left(\frac{4 W}{u^2}\right) p(\de u) .
		\end{equation*}
	\end{lemma}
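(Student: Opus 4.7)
The argument follows the classical Garsia--Rademich--Rumsey selection scheme. I would first define
\begin{equation*}
I(x) := \int_0^T \Psi\!\left(\frac{\|\phi(x)-\phi(y)\|}{p(|x-y|)}\right)\de y, \qquad x \in (0,T),
\end{equation*}
so that by Fubini $\int_0^T I(x)\,\de x \leq W$. Fix $0 \leq s < t \leq T$ and set $d_0 = t$. The plan is to build inductively a strictly decreasing sequence $d_0 > d_1 > d_2 > \cdots \to s$, choosing $d_{n+1}$ in the left half $(s,(s+d_n)/2]$ of $(s,d_n]$ so that simultaneously
\begin{equation*}
I(d_{n+1}) \leq \frac{2}{d_n - s}\int_s^{d_n} I(x)\,\de x \leq \frac{2W}{d_n - s}, \qquad \Psi\!\left(\frac{\|\phi(d_n)-\phi(d_{n+1})\|}{p(d_n - d_{n+1})}\right) \leq \frac{2 I(d_n)}{d_n - s}.
\end{equation*}
Existence of such a $d_{n+1}$ follows from a Markov-type argument: each condition fails on a subset of $(s,(s+d_n)/2]$ of Lebesgue measure strictly less than $(d_n - s)/4$, hence the admissible set has positive measure.

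Chaining the two inequalities at consecutive indices and inverting $\Psi$ gives
\begin{equation*}
\|\phi(d_n) - \phi(d_{n+1})\| \leq p(d_n - d_{n+1})\,\Psi^{-1}\!\left(\frac{4W}{(d_{n-1} - s)(d_n - s)}\right).
\end{equation*}
The dyadic spacing $d_{n+1} - s \leq (d_n - s)/2$ allows the argument of $\Psi^{-1}$ to be bounded above by $4W/u^2$ uniformly for $u \in (d_{n+1} - s, d_n - s]$, and the resulting sum
\begin{equation*}
\sum_{n \geq 0} \|\phi(d_n) - \phi(d_{n+1})\|
\end{equation*}
is recognized as a Riemann--Stieltjes upper sum for $8\int_0^{t-s}\Psi^{-1}(4W/u^2)\,p(\de u)$; the factor $8$ absorbs the two doubling constants produced by the averaging step and the geometric decay of $d_n - s$.

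Finally, using continuity of $\phi$ on $(0,T)$, one has $\phi(d_n) \to \phi(s)$, so passing to the limit in the telescoping sum yields
\begin{equation*}
\|\phi(t) - \phi(s)\| \leq \sum_{n \geq 0} \|\phi(d_n) - \phi(d_{n+1})\| \leq 8\int_0^{t-s}\Psi^{-1}\!\left(\frac{4W}{u^2}\right) p(\de u),
\end{equation*}
which is the stated estimate. The main technical obstacle is the simultaneous selection of $d_{n+1}$ satisfying both averaging inequalities together with the dyadic-spacing constraint, and the careful tracking of numerical constants to recover \emph{exactly} the factor $8$ and the integrand $\Psi^{-1}(4W/u^2)$, rather than a cruder bound of the same form with worse constants. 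Once this selection and the geometric control of $d_n - s$ are in place, the remainder of the argument is a purely mechanical conversion of the telescoping sum into a Stieltjes integral.
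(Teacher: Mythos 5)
The paper does not prove this lemma at all: it is imported verbatim from Stroock and Varadhan's \emph{Multidimensional diffusion processes} \cite{stroock1997multidimensional} and simply cited, so there is no proof in the paper to compare against. Your sketch reconstructs the classical Garsia--Rodemich--Rumsey selection argument, which is indeed the right strategy, but as written it has two genuine gaps.

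First, you set $d_0 = t$ and chain downward to $s$, but the first link of the chain needs control on $I(t)$, and $I(t)$ is \emph{not} controlled: the hypothesis only bounds $\int_0^T I(x)\,\de x$, so $I$ can be arbitrarily large at any single point, in particular at $t$. For $n\ge 1$ your two inequalities combine to a usable bound on $\|\phi(d_n)-\phi(d_{n+1})\|$ because condition (1) at step $n-1$ controls $I(d_n)$; but at $n=0$ there is no predecessor and $\|\phi(t)-\phi(d_1)\|$ is unbounded. The standard fix, which also explains where the constant $8$ actually comes from, is to first reduce to $[s,t]$ and choose an \emph{intermediate} point $t_0\in(s,t)$ with $I(t_0)\le W/(t-s)$ (which exists by the mean value theorem since $\int I\le W$), then run \emph{two} chains, one from $t_0$ down to $s$ and one from $t_0$ up to $t$. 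Each chain yields a bound $4\int_0^{t-s}\Psi^{-1}(4W/u^2)\,p(\de u)$, and the triangle inequality gives the factor $8$. Your remark that ``the factor $8$ absorbs the two doubling constants'' misattributes its origin: it is $4+4$, not a product of doublings within a single chain.

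Second, the measure estimate you invoke to select $d_{n+1}$ does not close. With the factor $2$ in your two conditions, Markov's inequality bounds each bad set by measure $\le (d_n-s)/2$, not $<(d_n-s)/4$ as you claim; the union of the two bad sets can therefore have measure up to $d_n-s$, which already exceeds the measure $(d_n-s)/2$ of the left half-interval $(s,(s+d_n)/2]$ from which you want to pick $d_{n+1}$. To make a simultaneous selection work inside a prescribed sub-interval one must either sharpen the constants in the conditions (at the cost of worsening the constants in the final estimate) or, as in Garsia's original construction, dispense with the a priori dyadic constraint on $d_{n+1}-s$ and instead impose a halving condition on $p(d_{n+1})$, letting the geometric decay emerge from the choice rather than being imposed. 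As it stands, the simultaneous-selection step — which you yourself flag as ``the main technical obstacle'' — is precisely where the argument breaks, so the sketch does not yet constitute a proof.
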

	Lemma \ref{lemma:GRR} is applied taking
	\begin{equation*}
		\phi(t)=M_u^\epsilon(t),\ p(x)=x^{\frac{2+\zeta}{2m}},\ \Psi(x)=x^{2m},
	\end{equation*}
	and $0<\zeta<<1$. Then there is a constant $C$ such that
	\begin{equation*}
		\|M_u^\epsilon(t)-M_u^\epsilon(s)\|^{2m}\leq C (t-s)^{\zeta}W,
	\end{equation*}
	for every $s$ and $t$ and 
	
	\[
	W = \int_0^T \int_0^T \frac{\|M^{\epsilon}_u(t) - M^{\epsilon}_u(s)\|^{2m}}{|t-s|^{2+\zeta}} \de s \de t.
	\]
	By \eqref{estMu}
	\begin{equation}\label{ineq:ExpectB}
		\Expect[W]\leq \int_0^{T}\int_0^{T}\frac{C_m l_\epsilon^m(t-s)}{|t-s|^{2+\zeta}}\de s\de t.
	\end{equation}
	The right hand side of \eqref{ineq:ExpectB} can be written as 
	\begin{equation*}
		\int_0^{T}\int_0^{T}\frac{C_m l_\epsilon^m(t-s)}{|t-s|^{2+\zeta}}\de s\de t = I_l+\tilde{I}_l
	\end{equation*}
	where 
	\begin{equation*}
		I_l = C_m \int_0^{T}\int_0^{T} \frac{\epsilon^{m/2} }{|t-s|^{2+\zeta}}\mathbbm{1}_{\emm (t-s')> \epsilon^\alpha}\de s\de t\leq C_{m,\zeta} \epsilon^{m/2-(\alpha+1/2)(1+\zeta)},
	\end{equation*}
	and 
	\begin{align*}
		\tilde{I}_l & =  C_m \int_0^{T}\int_0^{T} \frac{|t-s|^{m/2}+\epsilon^{-m/2}|t-s|^{m/2} + o(\epsilon^{-m/2}|t-s|^{m/2})  }{|t-s|^{2+\zeta}}\mathbbm{1}_{\emm (t-s')\leq \epsilon^\alpha}\de s\de t\\
		& \leq \tilde{C}_{m,\zeta}\left(\epsilon^{(1/2+\alpha)(m/2-1-\zeta)}+\epsilon^{(1/2+\alpha)(2m-1-\zeta)-m/2} \right),
	\end{align*}
	
	for suitable constants $ C_{m,\zeta}$ and  $\tilde{C}_{m,\zeta}$ depending on $m,\zeta$ and $T$.\\
	Note that the expected valued  $\Expect[W]$ is small with $\epsilon$ if  $m>(1+\zeta)\max\{2,2(\alpha+\frac{1}{2}),\frac{2\alpha+1}{4\alpha+1}\}$. Now, applying Garsia-Rodemich-Rumsey Lemma we have
	\begin{equation}\label{ineq:ExpectM}
		\Expect \left[\sup_{0\leq s\leq t\leq T}\frac{\|M_u^\epsilon(t)-M_u^\epsilon(s)\|^{2m}}{|t-s|^\zeta}\right]\leq C e^{m/2-(\alpha+1/2)(1+\zeta)}
	\end{equation}
	
	which implies
	\begin{equation}\label{ineq:ProbM}
		\mathsf{P}\left(\sup_{0\leq t\leq T}\|M_u^\epsilon(t)\|\geq \frac12 \epsilon^q \right) \leq C T^\zeta\epsilon^{m/2-(\alpha+1/2)(1+\zeta)-2mq}.
	\end{equation}
	for any $q\in(0,\frac{1}{4})$
	for a possibly different constant $C>0$.
	\\
	\textit{STEP 3:} 
	We now complete the estimate of \eqref{eq:varcostU}.
	Suppose that $m>\frac{2}{1-4q}(\alpha+\frac{1}{2})(1+\zeta)$ and define the event 
	$$A_u^\epsilon=\{\sup_{0\leq t\leq T}\|M_u^\epsilon(t)\|\geq \frac12 \epsilon^q\}.$$
	Applying \eqref{ineq:ProbM}, as $\epsilon\to0$, the probability of the event $A_u^\epsilon$ goes to zero, i.e. $\mathsf{P}(A_u^\epsilon)\to 0$.\\
	In the complementary event $(A_u^\epsilon)^c$, for any $t\in[0,T\wedge\tau_{\Delta_\epsilon}^\epsilon]$, using equation \eqref{eq:varcostU} it holds
	
	\begin{equation}\label{eq:estimate u A}
		\begin{aligned}
			\|\ued(t)\|\leq &  \, e^{-\lambda^*\emm t}\|u_0^\epsilon\| \\ & +\int_{0}^te^{-\lambda^*\emm (t-r)}\Big(\etq \|N\big(\euq(\pe+\tZe),\euq\ue+h\big(\euq(\pe+\tZe)\big)\big)\|\ \\
			& + K\Delta_\epsilon^2\eum\Big)\de r + \frac12 \epsilon ^q.
		\end{aligned}
	\end{equation}
	
	It is relevant to note that \eqref{eq:estimate u A} continues to hold also for $t>\tau_{\Delta_\epsilon}^\epsilon$: indeed the r.h.s. of \eqref{eq:estimate u A} remains constant for $t>\tau_{\Delta_\epsilon}^\epsilon$, while the l.h.s. increases.
	Define $\tau_q^\epsilon=\inf\{t > \epsilon^\beta: ||\ued(t)||\geq \epsilon^q\}$. 
	To complete the proof it is enough to show that $\tau_q^\epsilon>T\wedge\tau_{\Delta_\epsilon}^\epsilon$ almost surely.
	We observe that  the function $N$ in \eqref{eq:estimate u A} can be bounded as follow, using \eqref{newH},  and taking into account that the time variable, that we omit, is $r \leq T\wedge\tau_{\Delta_\epsilon}^\epsilon$:

	\begin{align*}
		\|N&\Big(\euq(\pe+\tZe),\euq\ue+h\big(\euq(\pe+\tZe)\big)\Big)\|\\
		&\leq  \|\underbrace{g\Big(\euq(\pe+\tZe),\euq\ue+h\big(\euq(\pe+\tZe)\big)\Big)-g\Big(\euq(\pe+\tZe),h\big(\euq(\pe+\tZe)\big)\Big)}_{a)}\|\\
		& +\etqp \underbrace{\|\mathrm{D}h\Big(\euq(\pe+\tZe)\Big)\|}_{b)}\\
		&\cdot \underbrace{\|F^\epsilon\Big(\euq(\pe+\tZe),h\big(\euq(\pe+\tZe)\big)\Big)-F^\epsilon\Big(\euq(\pe+\tZe),\euq\ue+h\big(\euq(\pe+\tZe)\big)\Big)\|}_{c)}.
	\end{align*}
	Recall that $g(0,0)=0$, $Dg(0,0)=\underbar{0}$ and $Dh(0,0)=\underbar{0}$ then
	\begin{itemize}
		\item $a)\leq K_g \eum \Delta_\epsilon \|\ued\|$,
		\item $b)\leq  K_h \euq \Delta_\epsilon$,
	\end{itemize}
	where the constants $K_g,K_h$ does not depend on $\epsilon$.
	\begin{align*}
		c) \leq & \Big\| f^{(3)}\Big(\pe+\tZe,\emq h\Big(\euq(\pe+\tZe)\Big)\Big) - f^{(3)}\Big(\pe+\tZe,\ue+\emq h\big(\euq(\pe+\tZe)\big)\Big)  \Big\|\\
		& + \emq \Big\| f^{(2)}\Big(\emq h\Big(\euq(\pe+\tZe)\Big)\Big) - f^{(2)}\Big(\pe+\tZe,\ue+\emq h\big(\euq(\pe+\tZe)\big)\Big)  \Big\| \\
		& + \Big\| R_\epsilon\Big(\euq\big(\pe+\tZe\big), h\Big(\euq\big(\pe+\tZe\big)\Big)\Big) - R_\epsilon\Big(\euq\big(\pe+\tZe\big),\euq\ue+h\big(\euq(\pe+\tZe)\big)\Big)  \Big\|\\
		\leq &  K_f\Delta_\epsilon^2\|\ued\| + \emq K_{f^{(2)}}\Delta_\epsilon \|\ued\| + \euq K_{R}\Delta_\epsilon ,
	\end{align*}
	where the constants $K_{f},  K_{f^{(2)}}$ and  $K_{R}$ do not depend on $\epsilon$ and 
	\[
	R_\epsilon(\cdot)=\etq r_f(\cdot) + \frac{1}{2}\eum r_{p,\sigma}(\cdot). 
	\]
	Take a constant $\tilde{K}$ larger than all the constants $K_{(\cdot)}$ and  $t \in [\epsilon^{\beta}, T\wedge\tau_{\Delta_\epsilon}^\epsilon\wedge \tau_q]$. We have
	
	\begin{equation}
		\begin{split}
			\|\ued(t)\| & \leq e^{-\lambda^*\emm t}\|u_0^\epsilon\|+\tilde{K}\int_{0}^{t} e^{-\lambda^*\emm (t-r)}\emq\Delta_\epsilon^3 \|\ued\| \de r + \frac12 \epsilon ^q \\ 
			& \leq e^{-\lambda^*\emm t}\|u_0^\epsilon\|+\tilde{K} \int_{0}^{t} e^{-\lambda^*\emm (t-r)}\emq\Delta_\epsilon^3 2\epsilon^q \de r + \frac12 \epsilon ^q \\
			& \leq e^{-\lambda^*\emm \epsilon^\beta}\|u_0^\epsilon\| + \frac{2\tilde{K}}{\lambda^*}\Delta_\epsilon^3 \epsilon^{\frac{1}{4}+q} + \frac12 \epsilon ^q.
		\end{split}
	\end{equation}
	
	Observe that
	\[
	\lim_{\epsilon \rightarrow 0} \epsilon^{-q} \left[e^{-\lambda^*\emm \epsilon^\beta}\|u_0^\epsilon\| + \frac{2\tilde{K}}{\lambda^*}\Delta_\epsilon^3 \epsilon^{\frac{1}{4}+q}\right] = 0.
	\]
	This implies that $\|\ued(T\wedge\tau_{\Delta_\epsilon}^\epsilon\wedge \tau_q)\| \ll \epsilon^q$, 
	and therefore $\tau_q^\epsilon > T\wedge \tau^\epsilon_{\Delta_\epsilon}$, which completes the proof.
	
	
	\subsection{Proof Theorem \ref{teo:ped} } \label{subsection:pde}


The proof of this theorem retraces the steps used in the proof of Theorem \ref{teo:ued} applied to the process $\nod(t)=\|\ped(t)\|^2$, then some calculation is omitted. Applying Ito's formula, for any $t\in[0,T\wedge\tau_{\Delta_\epsilon}^\epsilon]$, using the fact that $Q + Q^{\mathsf{T}} = 0$, we have

\begin{equation}\label{eq:evolutionN}
	\begin{aligned}
		\de \nod(t) & = \Bigg\{2\emq(\ped)^{\mathsf{T}} f^{(2)}\Big(\ued+\emq h\big(\euq(\ped+\tZed)\big)\Big)\\ 
		&\qquad + 2(\ped)^{\mathsf{T}}  G_\epsilon\Big(\ped+\tZed,\ued+\emq h\big(\euq(\ped+\tZed)\big),\tZed\Big) \\ 
		&\qquad + 2(\ped)^{\mathsf{T}}  R_\epsilon\Big(\euq(\ped+\tZed),\euq\ued+h\big(\euq(\ped+\tZed)\big)\Big) \\ 
		&\qquad + \frac{1}{2}\mathrm{Tr}\Big(\sigma_{hQ}^{\mathsf{T}}\sigma_{hQ}\Big) \Bigg\}\de t \\ 
		&\quad + 2(\ped)^{\mathsf{T}} \sigma_{hQ}\de B(t)  
	\end{aligned}
\end{equation}
where $\sigma_{hQ}(t)=\sigma_{hQ}\Big(\euq(\ped(t)+\tZed(t)),\euq\ued+h\big(\euq(\ped(t)+\tZed(t))\big),\euq\tZed(t)\Big)$.\\
For convenience of notation, the time is sometimes omitted. The proof is divided in two steps.\\

\textit{STEP 1: } Define, for any $0\leq t\leq T\wedge\tau_{\Delta_\epsilon}^\epsilon$,  
\begin{align}\nonumber
	M_{n}^\epsilon(t)= & \int_{0}^{t}2(\ped)^{\mathsf{T}}  \sigma_{hQ}(r)\de B(r).
\end{align}
Let $A$ be a matrix-valued function and $v$ be a vector-valued function, then, for any $i,j$, denote by $\big(A\big)_{ij}(\cdot)$ the $ij$th-element of $A$ and  by $\big(v\big)_{i}(\cdot)$  the $i$th-element of $v$. Observe that
\begin{equation*}
	\begin{aligned}		
		\Big|\Big(\sigma_{hQ}\Big)_{ij}\Big(\euq\big(\ped+\tZed\big),& \euq\ued+h\big(\euq\big(\ped+\tZed\big)\big)\Big)\Big| \\
		&\leq   \sum_k\Big| \big(\ped\big)_k \partial_{z_k}\big(\sigma_{Q}\big)_{ij}\big(\tZed,h(\euq\tZed\big)\big)\Big| \\
		&\quad +\sum_l\Big| \big(\ued\big)_l \partial_{y_l}\big(\sigma_{Q}\big)_{ij}\big(\tZed,h(\euq\tZed\big)\big)\Big| + \big(\tilde{R}\big)_{ij}\big(\ped,\ued\big),
	\end{aligned}
\end{equation*}
where $\lim_{\epsilon\to 0}\frac{\| \big(\tilde{R}\big)_{ij}\big(\ped,\ued\big) \|}{\euq\Delta_\epsilon} =0$. Then it holds
\begin{align}
	\Big|\Big(\sigma_{hQ}\Big)_{ij}&\Big(\euq\big(\ped+\tZed\big), \euq\ued+h\big(\euq\big(\ped+\tZed\big)\big)\Big)\Big|  
	\leq K_\sigma\Big(\euq\|\ped\|+\euq\|\ued\|\Big) \label{ineq:sigmahQ}
\end{align}
for a suitable constant $K_\sigma$ that does not depend on $\epsilon$. The quadratic variation of $M_{n}^\epsilon$ can be estimate as follows 
\begin{align*}
	\Expect \left[	|M_{n}^\epsilon(t)|^2\right] = & \int_{0}^{t}\Expect\left[\mathrm{Tr}\left(\sigma_{hQ}(r)\big(\sigma_{hQ}(r)\big)^\mathsf{T} \right) \right]\de r\\
	& \leq C\Big\{\eum\Expect\big[\|\ped(r)\|^2\big]+\eum\Expect\big[\|\ued(r)\|^2\big]+\eum\Expect\big[\|\ped(r)\|\|\ued(r)\|\big]\Big\}	\\
	& \leq C' \eum \Delta_\epsilon^2,
\end{align*}
where $C, C'$ are suitable  constants that do not depend on $\epsilon$ and  they are obtained using inequality \eqref{ineq:sigmahQ} . 
By the Burkholder-Davis-Gundy inequality this implies
\begin{equation*}
	\Expect\left[\sup_{0\leq t \leq T} |M_n^\epsilon(t)|^2\right]\leq \tilde{C}\eum \Delta_\epsilon^2,
\end{equation*}
and, taking $\alpha\in(0,\frac{1}{4})$ and using Markov inequality, 
\begin{equation*}
	\mathsf{P}\left(\sup_{0\leq t \leq T} |M_n^\epsilon(t)|>  \epsilon^\alpha \right) \leq C \epsilon^{2(1/4-\alpha)}\Delta_\epsilon^2.
\end{equation*}

\textit{STEP 2: } Define the event $A_n^\epsilon=\{\sup_{0\leq t \leq T} |M_n^\epsilon(t)|>\epsilon^\alpha \}$ and  observe that $P(A_n^\epsilon)$ converges to zero, as $\epsilon$ goes to zero. In the complement of $A_n^\epsilon$ then, from equation \eqref{eq:evolutionN}, $\nod(t)$ can be bounded as follow:  

\begin{equation}\label{eq:estimateN}
	\begin{aligned} 
		\nod(t) \leq & C\int_{0}^{t}2\Big|\emq(\ped)^{\mathsf{T}} f^{(2)}\Big(\ued+\emq h\big(\euq(\ped+\tZed)\big)\Big)\Big|\\ 
		&\qquad + 2\Big|(\ped)^{\mathsf{T}}  G_\epsilon\Big(\ped+\tZed,\ued+\emq h\big(\euq(\ped+\tZed)\big),\tZed\Big)\Big| \\ 
		&\qquad + 2\Big|(\ped)^{\mathsf{T}}  R_\epsilon\Big(\euq(\ped+\tZed),\euq\ued+h\big(\euq(\ped+\tZed)\big)\Big)\Big| \\ 
		&\qquad + \frac{1}{2}\Big|\mathrm{Tr}\Big(\sigma_{hQ}^{\mathsf{T}}\sigma_{hQ}\Big)\Big| \de r +  \epsilon^\alpha.
	\end{aligned}
\end{equation}
Each term in the integral in \eqref{eq:estimateN} can be bounded as follow:

\begin{align*}
	&\begin{aligned}
		\Big|\emq(\ped(r))^{\mathsf{T}} f^{(2)}\Big(\ued(r)+\emq h\big(\euq(\ped(r)+\tZed(r))\big)\Big)\Big| \leq K_{f^{(2)}}\emq\Delta_\epsilon \|\ued(r)\|^2;
	\end{aligned}\\
	&\begin{aligned}
		\Big|(\ped(r))^{\mathsf{T}}  G_\epsilon\Big(\ped(r)+\tZed(r),  \ued(r)+\emq h\big(&\euq(\ped(r)+\tZed(r))\big),\tZed(r)\Big)\Big| \\ 
		&\leq K_G \Delta_\epsilon^2\big(\|\ped(r)\|^2+\Delta_\epsilon\|\ued(r)\|\big);
	\end{aligned}\\
	&\begin{aligned}
		\Big|(\ped(r))^{\mathsf{T}}  R_\epsilon\Big(\euq(\ped(r)+\tZed(r)),\euq\ued(r)+h\big(\euq(\ped(r)+\tZed(r))\big)\Big)\Big|\leq K_R\Delta_\epsilon^2\etqp;
	\end{aligned}\\
	&\begin{aligned}
		\Big|\mathrm{Tr}\Big(\big(\sigma_{hQ}(r)\big)^{\mathsf{T}}\sigma_{hQ}(r)\Big)\Big|\leq K_{Tr} \eum\Delta_\epsilon^2.
	\end{aligned}
\end{align*}

The constants $K_{f^{(2)}}, K_{G}$, $K_{R}$ and $K_{Tr}$ depend on the function in the subscript, but not on $\epsilon$. By the definition of $\nod$ and Theorem \ref{teo:ued} the estimate \eqref{eq:estimateN} can be rewritten

\begin{equation*}
	\begin{aligned} 
		\nod(t) \leq & K_{\max} \int_0^t \Delta_\epsilon^2 \nod(r)\de r + C_{\max}\Delta_\epsilon\Big(\emq\epsilon^\beta+\emq \epsilon^{2q}+\Delta_\epsilon^2\epsilon^q+\Delta_\epsilon \eum\Big) +  \epsilon^\alpha.
	\end{aligned}
\end{equation*}	
Using Gronwall's inequality it holds

\begin{equation*}
	\nod(t)\leq \Big[C_{\max}\Delta_\epsilon\Big(\emq\epsilon^\beta+\emq \epsilon^{2q}+\Delta_\epsilon^2\epsilon^q+\Delta_\epsilon \eum\Big) + \epsilon^\alpha \Big] e^{K_{\max}T \Delta_{\epsilon}^2}.
\end{equation*}
Observing that $e^{K_{\max}T \Delta_{\epsilon}^2}=(-\ln \epsilon)^{K_{\max}T}$, $\forall t\in[0,T\wedge\tau_{\Delta_\epsilon}^\epsilon]$, 
\begin{equation*}
	\|\ped(t) \|^2\leq \Big[C_{\max}\Delta_\epsilon\Big(\emq\epsilon^\beta+\emq \epsilon^{2q}+\Delta_\epsilon^2\epsilon^q+\Delta_\epsilon \eum\Big) + \epsilon^\alpha \Big] (-\ln \epsilon)^{K_{\max}T} \leq 2\epsilon^{2\gamma}
\end{equation*}
for a suitable choice of the parameters $\alpha$, $\beta$ and $q$. This inequality holds in the complement of $A_n^\epsilon$, which completes the proof.


\section{2-dimensional equation} \label{sec:2dimensional}
In this section the limiting behaviour of equation \eqref{eq:Z1Z2} is studied. At first its globally existence is proved. 

\begin{lemma}\label{lemma:uniquesolutionZe}
	Take $T>0$. 
	Equation \eqref{eq:Z1Z2}, with initial condition $(z_1^\epsilon,z_2^\epsilon)\in\mathbb{R}^2$, admits a unique solution in $[0,T]$. 
\end{lemma}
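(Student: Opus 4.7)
The coefficients of \eqref{eq:Z1Z2} consist of a skew-symmetric linear term $Q\tZpe$, the cubic damping $-\tZpe_i(\tZpe_1^2+\tZpe_2^2)$, and the smooth perturbations $\etq f^{(4)}(\euq\tZpe)$, $\eum r_{q,\sigma}(\euq\tZpe)$, together with the diffusion $\sigma_Q''(\euq\tZpe)$. All these ingredients are at least $C^{2}$, inherited from the $C^{4}$ regularity of $b$ and $\sigma$ and from the explicit polynomial form of $p$ and $q$ in Appendix \ref{appendix}; hence the coefficients are locally Lipschitz in $\tZpe$. A standard stopping-time construction therefore yields local existence and pathwise uniqueness up to an explosion time $\tau^{\epsilon}_{\infty}=\lim_{n\to\infty}\tau^{\epsilon}_{n}$, where $\tau^{\epsilon}_{n}:=\inf\{t\geq 0:\|\tZpe(t)\|\geq n\}$. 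The lemma reduces to showing $\tau^{\epsilon}_{\infty}>T$ almost surely.

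To rule out explosion I would employ a Lyapunov argument with $V(z):=1+z_{1}^{2}+z_{2}^{2}$. Applying It\^o's formula to $V(\tZpe)$ and exploiting the fact that $z^{\mathsf{T}}Qz=0$, the fast rotation $\emm\lambda_{0}$ drops out completely from $\mathcal{L}V$; the cubic drift contributes $-2\|\tZpe\|^{4}$, the quadratic variation $\tfrac12\mathrm{Tr}\bigl((\sigma_Q'')^{\mathsf{T}}\sigma_Q''\bigr)$ is $O(1)$ in $\|\tZpe\|$, and the remaining drift terms $\etq f^{(4)}(\euq\tZpe)$ and $\eum r_{q,\sigma}(\euq\tZpe)$ are dominated by the cubic damping for $\|\tZpe\|$ large. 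The upshot is a bound of the form $\mathcal{L}V(z)\leq K_{\epsilon}V(z)$ for a constant $K_{\epsilon}$ depending on $\epsilon$. Taking expectations of $V(\tZpe(t\wedge\tau^{\epsilon}_{n}))$ and applying Gronwall's inequality then gives a bound uniform in $n$, forcing $\tau^{\epsilon}_{n}\uparrow\infty$ almost surely and hence $\tau^{\epsilon}_{\infty}=\infty$.

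The main obstacle, and the point where care is needed, is that $f^{(4)}$, $r_{q,\sigma}$, and $\sigma_Q''$ were constructed from the local diffeomorphisms $\mathfrak{p}$ and $\mathfrak{q}$ and are a priori only defined (and only have the stated vanishing behavior at $0$) in a neighborhood of the origin, so their growth at infinity is not controlled by the hypotheses of the paper. To give \eqref{eq:Z1Z2} a globally well-posed meaning I would replace these coefficients by extensions obtained through multiplication by a smooth radial cutoff supported on $\{\|z\|\leq R\}$ for some fixed $R$; any such extension preserves the local Lipschitz and at-most-polynomial-growth properties used above. Since the process will always be used in Section \ref{sec:reduction} only up to the stopping time $\tau^{\epsilon}_{\Delta_{\epsilon}}$ with $\Delta_{\epsilon}=[\ln(-\ln\epsilon)]^{1/2}\ll\emq$, the extension coincides with the original coefficients on the relevant event, so this modification is harmless for the subsequent arguments.
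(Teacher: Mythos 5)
Your proposal is correct and takes essentially the same approach as the paper, which likewise applies Itô to $K^\epsilon=\|\tZpe\|^2$, uses $z^{\mathsf{T}}Qz=0$ to kill the fast rotation, discards the nonpositive $-2(K^\epsilon)^2$ contribution, and bounds the remaining (remainder plus quadratic-variation) terms by a constant — the paper then estimates $\Expect[K^\epsilon(t\wedge\tau_\Delta^k)]$ directly and applies a Markov bound rather than Gronwall, an inessential variation of the same Lyapunov/compact-containment idea. Your caveat about the coefficients $f^{(4)}$, $r_{q,\sigma}$, $\sigma_Q''$ being only locally defined, and the cutoff extension you propose, is a correct and welcome clarification of a point the paper treats loosely: its inequality $\sup_{\|\tZpe\|\leq\Delta}\|R^\epsilon(\euq\tZpe)\|\leq\sup_{\euq\|\tZpe\|\leq1}\|R^\epsilon(\euq\tZpe)\|$ strictly requires $\Delta\leq\emq$, which is incompatible with letting $\Delta\to\infty$ unless one makes exactly the kind of extension (harmless, since the process is later used only up to $\tau^\epsilon_{\Delta_\epsilon}$ with $\Delta_\epsilon\ll\emq$) that you describe.
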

\begin{proof}
	The solution exists and it is unique because the coefficients are locally Lipschitz. Consider the process $K^{\epsilon}(t)=\tZpe_1(t)^2+\tZpe_2(t)^2$, with initial condition $K^{\epsilon}(0)=(z_1^\epsilon)^2+(z_2^\epsilon)^2$, then applying Ito's formula it holds 
	\begin{equation*}
		\de K^\epsilon(t) = \Big(-2K^\epsilon(t)^2+R^\epsilon\big(\euq \tZpe(t)\big)\Big)\de t + \de M^\epsilon(t),
	\end{equation*}
	where $M^\epsilon(t)$ is a martingale and 
	\begin{align*}
		R^\epsilon\big(\euq \tZpe\big) = & 2\etq\Big(\tZpe_1f^{(4)}_1\big(\euq\tZpe\big)+\tZpe_2f^{(4)}_2\big(\euq\tZpe\big)\Big)+\eum\Big(\tZpe_1 r_{q,\sigma,1}(\euq\tZpe) +\tZpe_2r_{q,\sigma,1}(\euq\tZpe)\Big)\\
		& +\sum_{ij} \Big(\sigma_{Q}'\big(\euq\tZe\big)\Big)_{ij}^2.
	\end{align*}
	Take $\Delta>0$ and define $\tau_{\Delta}^k=\{\inf t\geq 0:\, K^\epsilon(t)>\Delta \}$ then 
	\begin{align}\label{eq:ineqstoptime}
		\Expect[K^{\epsilon}(t\wedge\tau_\Delta^k)\mathbbm{1}_{\tau_\Delta^k\leq t}]\leq \Expect[K^{\epsilon}(t\wedge\tau_\Delta^k)]\leq K^{\epsilon}(0) + C_R t,
	\end{align}	
	where $C_R$ is positive constant and $\sup_{\|\tZpe\|\leq \Delta} \|R^\epsilon\big(\euq \tZpe\big) \| \leq \sup_{\euq\|\tZpe\|\leq 1} \|R^\epsilon\big(\euq \tZpe\big) \|\leq C_R$. Using the definition of the stopping time and inequality \eqref{eq:ineqstoptime} it holds 
	\begin{equation*}
		\mathsf{P}\Big(\tau_\Delta^k\leq t\Big)\leq \frac{K^\epsilon(0)+C_R t}{\Delta}\xrightarrow{\Delta\to\infty}0.
	\end{equation*}
	This shows that the solution does not explode in any finite time interval $[0,T]$.
\end{proof}

As shown in section \ref{sec:heuristic} it is convenient to pass to polar coordinates.  Take $0<\delta<N<\infty$ and  define the stopping time 
\begin{equation}\label{def:stoppingDN}
	\tilde{\tau}_{\delta, N}^{'\epsilon}:=\inf\{t>0:\ \sqrt{\tZpe_1(t)^2+ \tZpe_2(t)^2}\notin(\delta,N)\}.
\end{equation}
Consider the processes $\big(\trhoep(t),\tthetaep(t)\big)_{t\in[0,T\wedge\tilde{\tau}_{\delta, N}^{'\epsilon}]}$ defined as follow:
\begin{equation} \label{def:rhodt}
	\tZpe_1(t)=\trhoep(t)\cos\big(\tthetaep(t)\big),\quad \tZpe_2=\trhoep(t)\sin\big(\tthetaep(t)\big).
\end{equation}
Then its dynamics is described by the following SDEs

\begin{align}\label{sde:polar2_n1}
	&\begin{aligned}
		\de \trhoep = &\Big\{\frac{1}{\trhoep}\Big[-(\trhoep)^4+\frac{1}{2}\Sigma_1^2(\sin\tthetaep)^2+ \frac{1}{2}\Sigma_2^2(\cos\tthetaep)^2 +\Sigma_{12}\sin\tthetaep\cos\tthetaep\Big] \\ 
		&\quad+\euq R_{\rho,b}^\epsilon\big(\trhoep,\tthetaep\big) \Big\}\de t+\de M_\rho^\epsilon;\\
	\end{aligned}\\
	&\begin{aligned}\label{sde:polar2_n2} 
		\de \tthetaep  =& \Big\{\epsilon^{-1/2}-\frac{1}{(\trhoep)^2}\Big[\sin\tthetaep\cos\tthetaep\big(\Sigma_{1}^2-\Sigma_{2}^2\big)+\Sigma_{12}\big((\cos\tthetaep)^2-(\sin\tthetaep)^2\big)\Big] \\  
		&\quad +\euq R_{\theta,b}\big(\trhoep,\tthetaep\big)\Big\}\de t +\de M_\theta^\epsilon, 
	\end{aligned}
\end{align}
where $\Sigma_1^2$ and $\Sigma_2^2$ are as in Theorem \ref{theor:Main}, $\Sigma_{12}=\sum_{i=1}^m\bar{\sigma}_{1i}\bar{\sigma}_{2i}$ and
\begin{align*}
	M_\rho^\epsilon(t) & = \sum_{i=1}^m \int_{0}^{t}\bar{\sigma}_{1i}\cos\big(\tthetaep(r)\big)+\bar{\sigma}_{2i}\sin\big(\tthetaep(r)\big)+\euq \big(R_{	\rho,\sigma}^\epsilon\big)_i\big(\trhoep(r),\tthetaep(r)\big) \de B_i(r), \\
	M_\theta^\epsilon(t) & =\sum_{i=1}^m \int_{0}^{t} \left\{\frac{1}{\trhoep(r)}\Big[\bar{\sigma}_{2i}\cos\big(\tthetaep(r)\big)-\bar{\sigma}_{1i}\sin\big(\tthetaep(r)\big)\Big] + \euq \big(R_{	\theta,\sigma}^\epsilon\big)_i\big(\trhoep(r),\tthetaep(r)\big) \right\} \de B_i(r).
\end{align*}
The functions $R_{\rho,b}^\epsilon$ and $R_{\theta,b}^\epsilon$ include the functions $ \etq f_1^{(4)}$, $\frac{1}{2}\eum r_{q,\sigma}$ and the Ito's terms. Moreover, the functions  $R_{\rho,\sigma}^\epsilon$ and $R_{\theta,\sigma}^\epsilon$ are obtained by the Taylor expansion of $\sigma_{Q}'$ in zero and the Ito's formula.\\

The strategy of the proofs in this section follows the Strook-Varadhan approach to martingale problem, also developed in detailed in \cite{dai2019dynamics}. The first result of this section is the well-posedness of equation \eqref{eq:limit process2ndimensional}.

\begin{lemma}\label{lemma:uniquesolution} Take $T>0$. 
	Equation \eqref{eq:limit process2ndimensional} admits a unique solution in $[0,T]$, for any initial condition $\bar{\rho}_0\neq 0$.
\end{lemma}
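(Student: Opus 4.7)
The plan is to follow the strategy of Lemma~\ref{lemma:uniquesolutionZe}. Since the drift $\bar{b}(\eta) = -\eta^3 + \frac{1}{4\eta}(\Sigma_1^2 + \Sigma_2^2)$ is smooth on $(0, \infty)$ and the diffusion coefficient is constant, the coefficients of \eqref{eq:limit process2ndimensional} are locally Lipschitz on $(0, \infty)$, giving a unique strong solution $\bar{\rho}$ up to the exit time $\tau_{\delta, N} := \inf\{t > 0 : \bar{\rho}(t) \notin (\delta, N)\}$ from any compact subinterval $[\delta, N] \subset (0, \infty)$. Global existence and uniqueness on $[0, T]$ then reduce to showing $\lim_{\delta \to 0, N \to \infty} \tau_{\delta, N} \geq T$ almost surely, which splits into non-explosion at infinity and non-attainment of zero.

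For non-explosion at infinity, I would mimic the proof of Lemma~\ref{lemma:uniquesolutionZe} by applying It\^o's formula to $K(t) := \bar{\rho}^2(t)$, yielding
\begin{equation*}
\de K(t) = \big[-2K(t)^2 + (\Sigma_1^2 + \Sigma_2^2)\big] \de t + \sqrt{2(\Sigma_1^2 + \Sigma_2^2) K(t)} \, \de \tilde{B}(t).
\end{equation*}
The strongly dissipative drift $-2K^2$ gives $\mathsf{E}[K(t \wedge \tau_N^K)] \leq K(0) + (\Sigma_1^2 + \Sigma_2^2) T$ where $\tau_N^K := \inf\{t : K(t) \geq N^2\}$, and Markov's inequality then yields $\mathsf{P}(\tau_N^K \leq T) \to 0$ as $N \to \infty$.

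For non-attainment of zero I would use the Lyapunov function $V(\eta) := \log(1/\eta)$. A direct computation gives
\begin{equation*}
\mathcal{L}V(\eta) = -\frac{1}{\eta}\bar{b}(\eta) + \frac{1}{2\eta^2}\cdot \tfrac{1}{2}(\Sigma_1^2+\Sigma_2^2) = \eta^2,
\end{equation*}
the singular term $\frac{\Sigma_1^2+\Sigma_2^2}{4\eta^2}$ from the drift being exactly cancelled by the It\^o correction. Applying It\^o on $[0, t \wedge \tau_\delta^0 \wedge \tau_N^K]$ with $\tau_\delta^0 := \inf\{t : \bar{\rho}(t) \leq \delta\}$, taking expectations and using $\mathcal{L}V \leq N^2$ on $\{\bar{\rho} \leq N\}$, one obtains
\begin{equation*}
\mathsf{P}(\tau_\delta^0 \leq t \wedge \tau_N^K) \;\leq\; \frac{V(\bar{\rho}_0) + N^2 T + \log N}{\log(N/\delta)} \;\xrightarrow{\delta \to 0}\; 0,
\end{equation*}
which combined with the previous step shows $\bar{\rho}(t) \in (0, \infty)$ for every $t \in [0, T]$, a.s. The main obstacle is the choice of Lyapunov function: one needs $V$ to blow up at the origin while having a generator bounded on compact subsets of $(0, \infty)$. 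The logarithmic choice works precisely because of the Bessel-like structure of $\bar{b}$, as \eqref{eq:limit process2ndimensional} near the origin is a rescaled $2$-dimensional Bessel process, which is known to avoid the origin.
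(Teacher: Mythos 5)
Your proposal is correct, and its computations check out: the generator of \eqref{eq:limit process2ndimensional} applied to $V(\eta)=\log(1/\eta)$ indeed gives $\bar{b}(\eta)\cdot(-1/\eta)+\tfrac14(\Sigma_1^2+\Sigma_2^2)\cdot(1/\eta^2)=\eta^2$, so the singular $1/\eta$ piece of the drift is exactly cancelled by the It\^o correction, and the It\^o computation for $K=\bar{\rho}^2$ is also right. The only housekeeping point is that both It\^o arguments should be run with the process stopped at $\tau_{\delta,N}=\tau^0_\delta\wedge\tau_N$ rather than at $\tau_N^K$ alone (since the solution is a priori only defined up to $\tau_{\delta,N}$); this is cosmetic, as your bounds already hold on the stopped process and the final estimate
\begin{equation*}
\mathsf{P}(\tau_{\delta,N}\leq T)\leq \frac{K(0)+(\Sigma_1^2+\Sigma_2^2)T}{N^2}+\frac{V(\bar{\rho}_0)+N^2T+\log N}{\log(N/\delta)}
\end{equation*}
lets you send $N\to\infty$ first and then $\delta\to 0$.

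Your route is genuinely different from the paper's. The paper does not work intrinsically with the one-dimensional SDE: it lifts to the two-dimensional Cartesian system \eqref{eq:Z1Z2l}, establishes its global well-posedness by the same ``compact containment'' device as in Lemma~\ref{lemma:uniquesolutionZe}, recovers $\bar{\rho}$ as $\sqrt{Z_1^2+Z_2^2}$ (so that non-explosion is inherited for free), and then dispatches the delicate part -- non-attainment of zero -- by observing that $(Z_1,Z_2)$ is absolutely continuous with respect to a planar Brownian motion, which almost surely never visits a point. Your argument instead stays entirely in one dimension and replaces the absolute-continuity-plus-polarity input by a Lyapunov computation with $V(\eta)=\log(1/\eta)$, which is precisely the scale function detecting the dimension-$2$ Bessel structure you point out at the end. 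The paper's approach is arguably more in the spirit of the surrounding development (the $2$-dimensional lift is needed elsewhere anyway, and it makes the ``Bessel($2$)'' mechanism structural rather than computational), while yours is more elementary and self-contained, requiring only It\^o's formula and an a priori bound. Both are valid proofs.
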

\begin{proof} This proof is analogous to the proof of Lemma \ref{lemma:uniquesolutionZe}. Let $\big(Z_1(t),Z_2(t)\big)_{t\geq 0}$ be a solution to the following stochastic differential equation
	\begin{equation}\label{eq:Z1Z2l}
		\begin{aligned} 
			\de Z_1&=\left[-Z_1\big(Z_1^2+Z_2^2\big)\right]\de t +\sqrt{\frac{1}{2}(\Sigma_{1}^2+\Sigma_{2}^2)}\de B_1,\\ 
			\de Z_2&=\left[-Z_2\big(Z_1^2+Z_2^2\big)\right]\de t +\sqrt{\frac{1}{2}(\Sigma_{1}^2+\Sigma_{2}^2)}\de B_2,
		\end{aligned}
	\end{equation}
	where $\Sigma_1$ and $\Sigma_{2}$ are defined in Theorem \ref{theor:Main}. 
	
	With the same argument in Lemma \ref{lemma:uniquesolutionZe} one shows that equation \eqref{eq:Z1Z2l} is globally well posed.

	%

	Define
	\begin{equation*}
		\bar{\tau}_{\delta, N}=\inf\big\{t>0: \sqrt{Z_1^2+Z_2^2}\notin(\delta,N) \big\}
	\end{equation*}
	and consider the process 
	$$\bar{\rho}(t\wedge\bar{\tau}_{\delta, N})=\sqrt{Z_1(t\wedge\bar{\tau}_{\delta, N})^2+ Z_2(t\wedge\bar{\tau}_{\delta, N})^2}.$$
	Then applying Ito's formula, for any $t\in[0,T\wedge\bar{\tau}_{\delta, N}]$, it holds:
	\begin{align*}
		\de\bar{\rho}(t)=&\frac{1}{\sqrt{Z_1(t)^2+ Z_2(t)^2}}\left[-\big(Z_1(t)^2+ Z_2(t)^2\big)^2+\frac{1}{4}(\Sigma_{1}^2+\Sigma_{2}^2) \right]\de t\\ 
		&+ \sqrt{\frac{1}{2}(\Sigma_{1}^2+\Sigma_{2}^2)} \left(\frac{Z_1(t)}{\sqrt{Z_1(t)^2+ Z_2(t)^2}}\de B_1(t)+ \frac{Z_2(t)}{\sqrt{Z_1(t)^2+ Z_2(t)^2}}\de B_2(t)\right).
	\end{align*}
	We note that $\forall t\in[0,T\wedge\bar{\tau}_{\delta, N}]$, the process $\int_0^t\frac{Z_1}{\sqrt{Z_1^2+ Z_2^2}}\de B_1+\int_{0}^{t} \frac{Z_2}{\sqrt{Z_1^2+ Z_2^2}}\de B_2$ is a zero-mean martingale with quadratic variation $t$, so it is a Brownian motion. Then the process $\bar{\rho}$ is a solution to \eqref{eq:limit process2ndimensional} in the time interval $[0,T\wedge\bar{\tau}_{\delta, N}]$. Uniqueness of the solution follows because the drift and diffusion coefficients are locally Lipschitz. \\
	The proof is completed by showing that 
	\begin{equation} \label{complete}
		\lim_{\substack{\delta\to 0^+\\ N\to\infty}}\mathsf{P}(\bar{\tau}_{\delta, N}<T)=0.
	\end{equation}
	By the global existence of the process $\big(Z_1,Z_2\big)_{t\geq 0}$ then 
	$$\lim_{N\to\infty}\mathsf{P}(\sup_{0\leq t \leq T}\sqrt{Z_1(t)^2+Z_2(t)^2}>N)=0.$$
	Take a sequence $\{\delta_n\}_{n\in\mathbb{N}}$, $\delta_n\to0$, and define the event $A_n=\left\{\inf_{t\in[0,T]}\sqrt{Z_1(t)^2+Z_2(t)^2})\leq \delta_n \right\}$. To prove \eqref{complete} it is enough to show that 
	\[
	\lim_{n \rightarrow +\infty} \mathsf{P}(A_n) = 0.
	\]
	Observe that, for each $n\geq1$,  $A_{n+1}\subseteq A_{n}$, so
	\[
	\lim_{n \rightarrow +\infty} \mathsf{P}(A_n)  = \lim_{n \rightarrow +\infty} \mathsf{P}\left(\cap_{n\geq 1}A_n \right)  =  \mathsf{P}\left(\inf_{t\in[0,T]}\sqrt{Z_1(t)^2+Z_2(t)^2})= 0 \right) = 0,
	\]
	where the last identity follows for the fact that $Z_1$ and $Z_2$ form a bidimensional diffusion that is absolutely continuous w.r.t. a bidimensional Brownian motion, and the Brownian motion never visits the origin almost surely. 
	
\end{proof}
The second result states that the family of stopped processes $\trhoep(\cdot\wedge\tilde{\tau}_{\delta,N}^{'\epsilon})$ is tight. 
\begin{lemma} \label{lemma:tight}
	Define a sequence $\{\epsilon_n\}_{n\in\mathbb{N}}$, with $\epsilon_n\to 0$ as $n\to\infty$. For each $\epsilon_n$, let $\trhoepn$ be the solution to \eqref{sde:polar2_n1}, then the sequence of processes $\{\trhoepn(t\wedge\tilde{\tau}_{\delta, N}^{'\epsilon_n})_{t\in[0,T]}\}_{n\in\mathbb{N}}$ is tight. 
\end{lemma}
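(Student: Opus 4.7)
The plan is to verify tightness in $C([0,T];\mathbb{R})$ of the stopped radial processes via Kolmogorov's continuity criterion. The key observation is that on the event $\{t\leq \tilde{\tau}_{\delta,N}^{'\epsilon_n}\}$ the process $\trhoepn$ lives in the compact interval $[\delta,N]$, so the drift and diffusion coefficients appearing in \eqref{sde:polar2_n1} are uniformly bounded. Specifically, since $|\sin\phi|,|\cos\phi|\leq 1$ and $\trhoepn\in[\delta,N]$, the expression $\frac{1}{\trhoepn}[-(\trhoepn)^4+\tfrac12\Sigma_1^2\sin^2+\tfrac12\Sigma_2^2\cos^2+\Sigma_{12}\sin\cos]$ is bounded by a constant $C_1=C_1(\delta,N)$; the remainder term $\euq R_{\rho,b}^\epsilon$ is locally bounded by construction and is multiplied by a vanishing factor; and the coefficients of $\dee M_\rho^\epsilon$ are uniformly bounded because $\bar\sigma$ is constant, $|\sin|,|\cos|\leq 1$, and the $\euq R_{\rho,\sigma}^\epsilon$ correction is negligible. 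Let me call this uniform bound $C=C(\delta,N,T)$, independent of $n$.

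For $0\leq s\leq t\leq T$ write $\tau_n:=\tilde{\tau}_{\delta,N}^{'\epsilon_n}$ and decompose
\[
\trhoepn(t\wedge\tau_n)-\trhoepn(s\wedge\tau_n)=\int_{s\wedge\tau_n}^{t\wedge\tau_n}\!\!\!\!b_n(r)\,\dee r+\int_{s\wedge\tau_n}^{t\wedge\tau_n}\!\!\!\!\Sigma_n(r)\,\dee B(r).
\]
The first integral is pointwise bounded by $C|t-s|$. For the martingale part, Burkholder--Davis--Gundy gives
\[
\Expect\!\left[\left|\int_{s\wedge\tau_n}^{t\wedge\tau_n}\Sigma_n(r)\,\dee B(r)\right|^4\right]\leq C'\,\Expect\!\left[\Big(\int_{s\wedge\tau_n}^{t\wedge\tau_n}|\Sigma_n(r)|^2\,\dee r\Big)^2\right]\leq C''|t-s|^2,
\]
and combining both estimates yields, for all $n$ sufficiently large,
\[
\Expect\!\left[|\trhoepn(t\wedge\tau_n)-\trhoepn(s\wedge\tau_n)|^4\right]\leq K|t-s|^2.
\]
This is the Kolmogorov--Chentsov moment bound with exponent $4$ and $|t-s|^2$, which suffices for tightness of the laws in $C([0,T];\mathbb{R})$ (Hölder continuity of order less than $1/4$).

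To close the argument one combines this equicontinuity estimate with tightness of the initial distributions: by the hypothesis of Theorem \ref{theor:Main}, $\trhoepn(0)$ converges (hence is tight) as $n\to\infty$, so the family $\{\trhoepn(0)\}_n$ is tight on $[\delta,N]$. Together with the Kolmogorov moment bound, the classical tightness criterion for continuous processes (see e.g.\ \cite{stroock1997multidimensional}) gives tightness of the full sequence of stopped processes. I do not foresee a genuine obstacle here: the essential point is that stopping at $\tilde{\tau}_{\delta,N}^{'\epsilon_n}$ removes the singularity at $\rho=0$, while the bounds on $\sin,\cos$ make the fast angular variable $\tthetaepn$ irrelevant for controlling moments of the radial increments; the coefficient $\epsilon^{-1/2}$ that produces the separation of scales appears only in the drift of $\tthetaepn$, not in that of $\trhoepn$, so it does not enter the moment estimates above.
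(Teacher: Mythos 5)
Your proof is correct, but it takes a genuinely different route from the paper. The paper verifies tightness via Aldous' criterion: it shows (1) the stopped processes are uniformly bounded by $N$, and (2) for stopping times $\tau_1\leq\tau_2\leq\tau_1+\beta$, the increment $\trhoepn(\tau_2\wedge\tilde\tau)-\trhoepn(\tau_1\wedge\tilde\tau)$ is small in probability, using the Optional Sampling Theorem to control the martingale part (via its quadratic variation) and Chebyshev's inequality. You instead verify the Kolmogorov--Chentsov moment criterion, obtaining $\Expect[|\trhoepn(t\wedge\tau_n)-\trhoepn(s\wedge\tau_n)|^4]\leq K|t-s|^2$ via BDG, and then combining this with tightness of the initial laws. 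Both routes are standard and equally valid here, since the processes are continuous diffusions with coefficients bounded on the stopped domain $[\delta,N]$. Aldous' criterion requires only second moments and increment control at stopping times, which makes it more robust in settings with jumps or where only low-order moments are available; the Kolmogorov approach requires a fourth moment bound but buys you Hölder regularity of the paths (order $<1/4$ here) as a byproduct and sidesteps the manipulation of general stopping times. Your observation that the $\epsilon^{-1/2}$ coefficient driving the fast variable never enters the radial moment estimates is exactly the point that makes either approach go through, and it is the same mechanism the paper exploits. One minor point worth being explicit about: when combining the drift and martingale contributions, the drift's fourth power gives $O(|t-s|^4)$, so the $|t-s|^2$ bound indeed dominates for $|t-s|\leq T$; and one should check (as the paper does implicitly) that the remainder terms $\euq R_{\rho,b}^{\epsilon_n}$ and $\euq R_{\rho,\sigma}^{\epsilon_n}$ are bounded uniformly in $n$ on the stopped domain — this holds because they are locally bounded around the origin and the argument $\euq\trhoepn$ stays in a shrinking ball.
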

\begin{proof}
	The key ingredient is Aldous' criterion, see \cite{comets1988asymptotic}. The following two hypothesis need to be satisfied:
	\begin{enumerate}
		\item $\forall \gamma>0$, $\exists C>0$ such that 
		$$\sup_n\mathsf{P}\left(\sup_{t\in[0,T]}\big|\trhoepn(t\wedge\tilde{\tau}_{\delta, N}^{'\epsilon_n})\big|\geq C\right)\leq \gamma;$$
		\item $\forall \gamma>0$, and  $\forall\alpha>0$, $\exists\beta>0$ such that 
		\begin{equation}\label{eq:conditionAver}
			\sup_n\sup_{0\leq\tau_1\leq\tau_2\leq(\tau_1+\beta)\wedge T}\mathsf{P}\left( |\trhoepn(\tau_2\wedge\tilde{\tau}_{\delta, N}^{'\epsilon_n})-\trhoepn(\tau_1\wedge\tilde{\tau}_{\delta, N}^{'\epsilon_n})|\geq \alpha\right)\leq \gamma,
		\end{equation}
		where $\tau_1, \tau_2$ are stopping times adapted to the filtration generated by the process $\trhoepn$. 
	\end{enumerate}
	The first condition follows trivially because $|\trhoepn|=\trhoepn$ and $\trhoepn(t\wedge\tilde{\tau}_{\delta, N}^{'\epsilon})\leq N$.\\
	To show the second condition, we write explicitly the difference:
	\begin{align*}
		\trhoepn (\tau_2\wedge\tilde{\tau}_{\delta, N}^{'\epsilon_n}) & -\trhoepn(\tau_1\wedge\tilde{\tau}_{\delta, N}^{'\epsilon_n}) =  \int_{\tau_1\wedge\tilde{\tau}_{\delta, N}^{'\epsilon_n}}^{\tau_2\wedge\tilde{\tau}_{\delta, N}^{'\epsilon_n}}
		\Bigg\{ \frac{1}{\trhoepn(r)}\Big[-\big(\trhoepn(r)\big)^4+\frac{1}{2}\Sigma_1^2\big(\sin\tthetaepn(r)\big)^2 \\
		& + \frac{1}{2}\Sigma_2^2\big(\cos\tthetaepn(r)\big)^2 +\Sigma_{12}\sin\tthetaepn(r)\cos\tthetaepn(r)\Big] +\euq R_{\rho,b}^{\epsilon_n}\big(\trhoepn(r),\tthetaepn(r)\big) \Bigg\}\de r \\
		& + M_\rho(\tau_2\wedge\tilde{\tau}_{\delta, N}^{'\epsilon_n})-M_\rho(\tau_1\wedge\tilde{\tau}_{\delta, N}^{'\epsilon_n}).
	\end{align*}
	The term in the integral is bounded by a constant $K$ depending on $\delta$, $N$. Then
	\begin{align*}
		\big|\trhoepn (\tau_2\wedge\tilde{\tau}_{\delta, N}^{'\epsilon_n})-\trhoepn(\tau_1\wedge\tilde{\tau}_{\delta, N}^{'\epsilon_n}) \big| \leq &  K(\delta,N)\big(\tau_2\wedge\tilde{\tau}_{\delta, N}^{'\epsilon_n}-\tau_1\wedge\tilde{\tau}_{\delta, N}^{'\epsilon_n}\big) \\
		& + \big|M_\rho(\tau_2\wedge\tilde{\tau}_{\delta, N}^{'\epsilon_n})-M_\rho(\tau_1\wedge\tilde{\tau}_{\delta, N}^{'\epsilon_n})\big|.
	\end{align*}
	By the Optional Sampling Theorem:
	\begin{equation*}
		\Expect\Big[\big|M_\rho(\tau_2\wedge\tilde{\tau}_{\delta, N}^{'\epsilon_n})-M_\rho(\tau_1\wedge\tilde{\tau}_{\delta, N}^{'\epsilon_n})\big|^2 \Big]=\Expect\Big[M_\rho(\tau_2\wedge\tilde{\tau}_{\delta, N}^{'\epsilon_n})^2-M_\rho(\tau_1\wedge\tilde{\tau}_{\delta, N}^{'\epsilon_n})^2\Big].
	\end{equation*}
	Observe that
	\begin{align*}
		\Expect\Big[M_\rho(\tau_2\wedge\tilde{\tau}_{\delta, N}^{'\epsilon_n})^2\Big] & = \sum_{i=1}^m \Expect\Bigg[\int_{0}^{\tau_2\wedge\tilde{\tau}_{\delta, N}^{'\epsilon_n}}\Big\{\bar{\sigma}_{1i}^2\big(\cos\tthetaepn(r)\big)^2+\bar{\sigma}_{2i}^2\big(\sin\tthetaepn(r)\big)^2\\
		&\quad +2\bar{\sigma}_{1i}\bar{\sigma}_{2i}\cos\tthetaepn(r)\sin\tthetaepn(r) + \euq \big(\tilde{R}_{	\rho,\sigma}^\epsilon\big)_i\big(\trhoepn(r),\tthetaepn(r)\big)^2 \Big\}\de r\Bigg] \\
		&\leq (\tau_2\wedge\tilde{\tau}_{\delta, N}^{'\epsilon_n})  K_{\bar{\sigma}},
	\end{align*}
	for a  constant $K_{\bar{\sigma}}=K_{\bar{\sigma}}(\delta,N)$, where the functions $\big(\tilde{R}_{	\rho,\sigma}^\epsilon\big)_i$ contains all the terms obtained expanding the square and that are multiplied by $\euq$.  By Chebishev inequality
	\begin{equation*}
		\mathsf{P}\left(\big|M(\tau_2\wedge\tilde{\tau}_{\delta, N}^{'\epsilon_n})-M(\tau_1\wedge\tilde{\tau}_{\delta, N}^{'\epsilon_n})\big|\geq\alpha\right) \leq \frac{K_{\bar{\sigma}} \beta}{\alpha^2}.
	\end{equation*}
	In conclusion,  taking $\beta\leq\min\{\frac{\alpha^2\gamma}{K_{\bar{\sigma}}},\frac{\alpha}{K}\}$ it holds
	
	\begin{align*}
		\sup_n\sup_{0\leq\tau_1\leq\tau_2\leq(\tau_1+\beta)\wedge T}\mathsf{P}\left( |\trhoepn(\tau_2\wedge\tilde{\tau}_{\delta, N}^{'\epsilon_n})-\trhoepn(\tau_1\wedge\tilde{\tau}_{\delta, N}^{'\epsilon_n})|\geq \alpha\right)\leq \gamma.
	\end{align*}
	The second condition is proved, so the sequence of stopped processes is tight.
\end{proof}

By tightness, the sequence of processes $\{\rho^{\epsilon_n}(t\wedge\tilde{\tau}_{\delta, N}^{'\epsilon})_{t\in[0,T]}\}_{n\in\mathbb{N}}$ converges, along a subsequence, to a limit process. Call the limit process  $\rho^*_{\delta,N}$. \\  Next step consists in providing a relation between the limit by tightness and the solution to the limit equation \eqref{eq:limit process2ndimensional}.\\
Let $\bar{\mathcal{L}}$ be the generator of the limit process, solution to the equation \eqref{eq:limit process2ndimensional}, then
\begin{equation*}
	\big(\bar{\mathcal{L}}f\big)(\bar{\rho})=\bar{b}(\bar{\rho})f'(\bar{\rho})+\frac{1}{4}(\Sigma_{1}^2+\Sigma_{2}^2) f''(\bar{\rho}),
\end{equation*}
for each function $f\in\mathrm{dom}(\bar{\mathcal{L}})$ .
\begin{proposition}\label{prop:martingale}
	Let $\rho^*_{\delta,N}$ be any limit point of the sequence in Lemma \ref{lemma:tight}.
	For any $f\in C_c^\infty([\delta,N])$ the stochastic process
	\begin{equation*}
		M_{\delta,N}^{*f}(t):=f\big(\rho^*_{\delta,N}(t)\big)-f\big(\rho^*_{\delta,N}(0)\big)-\int_{0}^{t}\big(\bar{\mathcal{L}}f\big)\big(\rho^*_{\delta,N}(r)\big)\de r
	\end{equation*}
	is a martingale.
\end{proposition}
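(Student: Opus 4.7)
The plan is a Stroock--Varadhan martingale-problem argument combined with the perturbed-test-function method for averaging. Fix $f\in C_c^\infty([\delta,N])$ and write $\tilde\tau := \tilde\tau_{\delta,N}^{'\epsilon_n}$. For the prelimit process, Itô's formula applied to $f(\trhoepn)$ along the coupled system \eqref{sde:polar2_n1}--\eqref{sde:polar2_n2} produces a martingale $\mathscr{M}_n^f$ and the identity
\begin{equation*}
f(\trhoepn(t\wedge\tilde\tau)) - f(\trhoepn(0)) = \int_0^{t\wedge\tilde\tau}\mathcal{L}^{\epsilon_n} f\bigl(\trhoepn(r),\tthetaepn(r)\bigr)\de r + \mathscr{M}_n^f(t\wedge\tilde\tau),
\end{equation*}
where $\mathcal{L}^\epsilon f(\rho,\theta) = a^\epsilon(\rho,\theta)f'(\rho) + \tfrac12 w^\epsilon(\rho,\theta)f''(\rho)$ is read off from the drift in \eqref{sde:polar2_n1} and the quadratic variation of $M^\epsilon_\rho$. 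A direct trigonometric computation gives $\frac{1}{2\pi}\int_0^{2\pi}\mathcal{L}^\epsilon f(\rho,\theta)\de\theta = \bar{\mathcal{L}} f(\rho) + O(\euq)$, precisely because $\frac{1}{2\pi}\int_0^{2\pi}\bigl(\tfrac12\Sigma_1^2\sin^2\theta+\tfrac12\Sigma_2^2\cos^2\theta+\Sigma_{12}\sin\theta\cos\theta\bigr)\de\theta = \tfrac14(\Sigma_1^2+\Sigma_2^2)$, and analogously for the diffusion coefficient $w^\epsilon$.

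The core step is the averaging estimate for $\psi(\rho,\theta) := \mathcal{L}^\epsilon f(\rho,\theta) - \bar{\mathcal{L}} f(\rho)$. After peeling off an $O(\euq)$ correction coming from the remainders $R^\epsilon_{\rho,b},R^\epsilon_{\theta,b},R^\epsilon_{\rho,\sigma},R^\epsilon_{\theta,\sigma}$, we may assume $\int_0^{2\pi}\psi(\rho,\theta)\de\theta = 0$, so there exists $\Psi(\rho,\theta)$, $2\pi$-periodic and smooth, with $\partial_\theta\Psi = \psi$; because $f$ has compact support in $[\delta,N]$, $\Psi$ and its $\rho$- and $\theta$-derivatives are uniformly bounded on the region visited by the stopped process. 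Applying Itô to $\eum\Psi(\trhoepn,\tthetaepn)$ and exploiting that the drift of $\tthetaepn$ carries the singular term $\emm$, the leading contribution is $\eum\cdot\emm\cdot\partial_\theta\Psi\,\de t = \psi\,\de t$; every remaining summand is a martingale with $L^2$-norm $O(\eum)$ or a deterministic term of order $\euq$ (the $1/\trhoepn$ in the $\theta$-coefficients is bounded by $1/\delta$ on the stopped interval). Rearranging,
\begin{equation*}
\int_0^{t\wedge\tilde\tau}\psi(\trhoepn,\tthetaepn)\,\de r = \eum\bigl[\Psi(\trhoepn,\tthetaepn)\bigr]_0^{t\wedge\tilde\tau} - \mathscr{N}^\epsilon(t\wedge\tilde\tau) + O(\euq),
\end{equation*}
so the left-hand side is $o_{\mathsf{P}}(1)$ uniformly in $t\in[0,T]$.

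Combining the two displays, for each $n$,
\begin{equation*}
f(\trhoepn(t\wedge\tilde\tau)) - f(\trhoepn(0)) - \int_0^{t\wedge\tilde\tau}\bar{\mathcal{L}} f(\trhoepn(r))\,\de r = \mathscr{M}_n^f(t\wedge\tilde\tau) + o_{\mathsf{P}}(1).
\end{equation*}
Along the subsequence provided by Lemma \ref{lemma:tight} the left-hand side converges in distribution to $M^{*f}_{\delta,N}(t)$ by continuity of $f$ and $\bar{\mathcal{L}} f$ on $[\delta,N]$ and a standard Skorokhod-representation argument; the martingale property of the limit is then obtained in the usual way, by verifying $\Expect[(\mathscr{M}_n^f(t\wedge\tilde\tau)-\mathscr{M}_n^f(s\wedge\tilde\tau))\Phi(\trhoepn|_{[0,s]})]=0$ for every $n$, every $0\le s\le t$ and every bounded continuous $\Phi$ on the past, and passing to the limit using that $f$ and $\bar{\mathcal{L}} f$ are bounded on $[\delta,N]$ (which supplies the uniform integrability needed for the limit).

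The main obstacle is the perturbed-test-function step: producing the $\theta$-antiderivative $\Psi$ with controlled derivatives and showing that every Itô correction from $\eum\Psi(\trhoepn,\tthetaepn)$ is genuinely negligible. Because $\mathcal{L}^\epsilon$ carries several lower-order-in-$\epsilon$ contributions from the normal-form remainders, each must be either averaged away or bounded directly by $\euq$; the compact support of $f$ inside $[\delta,N]$ is precisely what makes all these bounds uniform in $\epsilon$ and $\theta$, and what permits the treatment of the otherwise singular factor $1/\trhoepn$ in the angular dynamics.
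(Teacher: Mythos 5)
Your proposal is correct, but it replaces the paper's key averaging step by a genuinely different technique. The paper establishes the prelimit martingale identity exactly as you do (Dynkin's formula for $\mathcal{L}^{\epsilon_n}_{\delta,N}$, then $\mathcal{L}^{\epsilon_n}_{\delta,N}f=\mathcal{L}f+o(\euq_n)$), but then passes to the limit \emph{first}, reducing the problem to showing
\[
\int_s^t\big(\mathcal{L}f\big)\big(\trhoepn(r\wedge\tilde\tau),\tthetaepn(r\wedge\tilde\tau)\big)\,\de r\ \longrightarrow\ \int_s^t\big(\bar{\mathcal{L}}f\big)\big(\rho^*_{\delta,N}(r)\big)\,\de r
\]
weakly, and for this it invokes the abstract averaging result (Proposition~\ref{prop:tovazzi}, quoted from \cite{dai2019dynamics}) applied to $\rho_n=\trhoepn(\cdot\wedge\tilde\tau)$ and $\varphi_n(t)=\tthetaepn(t\wedge\tilde\tau)-\epsilon_n^{-1/2}t$, after checking hypothesis (2) of that proposition via Proposition~\ref{prop:average condition}. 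You, by contrast, do the averaging \emph{at the prelimit level} by a perturbed-test-function argument: you construct a $\theta$-corrector $\Psi$ with $\partial_\theta\Psi=\mathcal{L}^{\epsilon_n}f-\bar{\mathcal{L}}f$ (after stripping the $O(\euq)$ remainders), apply It\^o to $\eum\Psi(\trhoepn,\tthetaepn)$ so that the $\emm$ angular drift cancels $\psi$, and observe that all remaining It\^o corrections are $O(\eum)$ or $O(\euq)$ uniformly on $\{\trhoepn\in[\delta,N]\}$, which yields the uniform $o_{\mathsf P}(1)$ bound directly and then passes to the limit in a single step. Both routes are standard and both work here; what the paper's route buys is that the averaging machinery (including the handling of oscillation) is encapsulated in a cited black-box lemma, at the cost of having to verify its two hypotheses (done in Lemma~\ref{lemma:tight} and Proposition~\ref{prop:average condition}); what your route buys is self-containedness and an explicit mechanism for why the fast variable averages out, at the cost of constructing and controlling the corrector $\Psi$ and its derivatives --- which, as you correctly note, is only harmless because $f\in C_c^\infty([\delta,N])$ keeps $1/\trhoepn$ bounded and makes all $\theta$-antiderivatives of the cell problem uniformly bounded.

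One small point worth making explicit if you write this out in full: when you appeal to the martingale identity $\Expect[(\mathscr M^f_n(t\wedge\tilde\tau)-\mathscr M^f_n(s\wedge\tilde\tau))\Phi]=0$ with $\Phi$ a functional of $\trhoepn|_{[0,s]}$, the martingale $\mathscr M^f_n$ is adapted to the full filtration of the driving Brownian motions (it sees $\tthetaepn$), but $\Phi$ is measurable only with respect to the smaller filtration of $\trhoepn$; the identity still holds because $\Phi$ is $\mathcal F_{s\wedge\tilde\tau}$-measurable for the big filtration too, and the limiting martingale property is then automatically with respect to the natural filtration of $\rho^*_{\delta,N}$, which is what the proposition asserts.
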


For the proof of Proposition \ref{prop:martingale} we use the following averaging principle, see \cite[Proposition~3.2]{dai2019dynamics}. 
\begin{proposition}\label{prop:tovazzi}
	Consider $\mathcal{G}: \mathbb{R} \times \mathbb{R} \rightarrow \mathbb{R}$ a locally Lipschitz continuous function, $2 \pi$-periodic in the second variable. Let $\left\{\left(\rho_n(t), \varphi_n(t)\right)_{t \in[0, T]}\right\}_{n \geq 1}$ be a family of cadlag Markov processes such that:
	\begin{enumerate}[(1)]
		\item as $n \rightarrow \infty,\left(\rho_n(t)\right)_{t \in[0, T]}$ converges, in sense of weak convergence of stochastic processes, to a process $(\rho^*(t))_{t \in[0, T]}$. Assume also that there exists a compact set $K \subset \mathbb{R}$ such that, for any $t \in[0, T]$ and $n \geq 1, \rho_n(t) \in K$ and $\rho^*(t) \in K$ and that	condition \eqref{eq:conditionAver} holds true for the sequence $\left\{\rho_n(t)\right\}_{n \geq 1}$;
		\item for any $\gamma>0$ there exist $k^{\prime}>0$ and $\bar{n} \geq 1$, such that
		$$
		\sup _{0 \leq k \leq k^{\prime}} \Expect\left[\left|\varphi_n(t+k)-\varphi_n(t)\right|\right] \leq \gamma
		$$
		for any $n \geq \bar{n}$ and $t \in[0, T]$.
	\end{enumerate}
	Then, for any $c>0$ and $\xi>0$, the following averaging principle holds:
	$$
	\int_0^T \mathcal{G}\left(\rho_n(s), c n^{\xi} s+\varphi_n(s)\right) d s \xrightarrow{\text { weakly }} \int_0^T \bar{\mathcal{G}}(\rho^*(s)) d s, \quad \text { as } n \rightarrow \infty
	$$
	where $\bar{\mathcal{G}}$ is the averaged function defined by
	$$
	\bar{\mathcal{G}}(x)=\frac{1}{2 \pi} \int_0^{2 \pi} \mathcal{G}(x, \theta) \de \theta .
	$$
\end{proposition}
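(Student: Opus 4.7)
The plan is to establish the martingale property by starting from the Ito decomposition of $f(\trhoepn)$ at the prelimit level, then passing to the limit along the convergent subsequence supplied by Lemma \ref{lemma:tight}, with the integrated drift handled via the averaging principle in Proposition \ref{prop:tovazzi} and the martingale property preserved by standard weak-convergence arguments.

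First, I would fix $f\in C_c^\infty([\delta,N])$ and apply Ito's formula to $f\big(\trhoepn(t\wedge\tilde{\tau}_{\delta,N}^{'\epsilon_n})\big)$ using equation \eqref{sde:polar2_n1}. This yields
\begin{equation*}
f\big(\trhoepn(t\wedge\tilde{\tau}_{\delta,N}^{'\epsilon_n})\big) - f\big(\trhoepn(0)\big) = \int_0^{t\wedge\tilde{\tau}_{\delta,N}^{'\epsilon_n}} G^{\epsilon_n}\big(\trhoepn(r),\tthetaepn(r)\big)\,\de r + M^{f,\epsilon_n}(t),
\end{equation*}
where $M^{f,\epsilon_n}$ is a genuine martingale and
\begin{equation*}
G^{\epsilon_n}(\rho,\theta) = a(\rho,\theta)f'(\rho) + \tfrac{1}{2}w(\theta)f''(\rho) + \euq R_f^{\epsilon_n}(\rho,\theta),
\end{equation*}
with $a(\rho,\theta) = \rho^{-1}\bigl[-\rho^4 + \tfrac12\Sigma_1^2\sin^2\theta + \tfrac12\Sigma_2^2\cos^2\theta + \Sigma_{12}\sin\theta\cos\theta\bigr]$, $w(\theta) = \sum_i(\bar{\sigma}_{1i}\cos\theta+\bar{\sigma}_{2i}\sin\theta)^2$, and $R_f^{\epsilon_n}$ uniformly bounded on $[\delta,N]\times\mathbb{R}$. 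A direct computation gives $\bar{G}(\rho) := \frac{1}{2\pi}\int_0^{2\pi} G(\rho,\theta)\,\de\theta = \bar{b}(\rho)f'(\rho) + \tfrac14(\Sigma_1^2+\Sigma_2^2)f''(\rho) = (\bar{\mathcal{L}}f)(\rho)$.

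Second, I would verify the hypotheses of Proposition \ref{prop:tovazzi} applied to $\mathcal{G} = G - \bar{G}$ with $\rho_n = \trhoepn(\cdot\wedge\tilde{\tau}_{\delta,N}^{'\epsilon_n})$, decomposing the angular variable as $\tthetaepn(t) = \epsilon_n^{-1/2}t + \varphi_n(t)$ so that the fast rotation $\epsilon_n^{-1/2}t$ plays the role of $cn^\xi s$ in Proposition \ref{prop:tovazzi}. Hypothesis (1) is Lemma \ref{lemma:tight} (with the compact set $K=[\delta,N]$), together with the Aldous condition already proved there. For hypothesis (2), equation \eqref{sde:polar2_n2} shows that $\varphi_n$ has uniformly bounded drift on $\{\trhoepn\geq\delta\}$ and a martingale part with quadratic variation rate bounded by a constant depending only on $\delta$, so $\Expect[|\varphi_n(t+k)-\varphi_n(t)|]\leq C(\delta)(k+\sqrt{k})$, which is the required equicontinuity. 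Proposition \ref{prop:tovazzi} then gives
\begin{equation*}
\int_0^t G^{\epsilon_n}\big(\trhoepn(r),\tthetaepn(r)\big)\,\de r \xrightarrow{\text{weakly}} \int_0^t (\bar{\mathcal{L}}f)\big(\rho^*_{\delta,N}(r)\big)\,\de r,
\end{equation*}
the remainder $\euq R_f^{\epsilon_n}$ vanishing uniformly by boundedness.

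Third, I would promote the convergence to the martingale property. By Skorokhod's representation I may assume, on a common probability space and along the subsequence, that $\trhoepn \to \rho^*_{\delta,N}$ uniformly on $[0,T]$ almost surely, and jointly with the convergence of the drift integral above. For any $0\leq s \leq t \leq T$ and any bounded continuous $\Phi:C([0,s],[\delta,N])\to\mathbb{R}$, the martingale property of $M^{f,\epsilon_n}$ reads
\begin{equation*}
\Expect\!\left[\Bigl(f(\trhoepn(t)) - f(\trhoepn(s)) - \int_s^t G^{\epsilon_n}(\trhoepn(r),\tthetaepn(r))\,\de r\Bigr)\,\Phi\bigl(\trhoepn|_{[0,s]}\bigr)\right]=0.
\end{equation*}
Since $f$ and $\Phi$ are bounded and continuous, since $\rho^*_{\delta,N}$ lives in the compact set $[\delta,N]$, and since the drift integral converges jointly with the paths, I can pass to the limit in $n$ using dominated convergence to obtain the same identity with $\trhoepn$ replaced by $\rho^*_{\delta,N}$ and $G^{\epsilon_n}$ replaced by $\bar{\mathcal{L}}f$. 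This identity, valid for all such $\Phi$, is exactly the martingale property of $M_{\delta,N}^{*f}$.

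The main obstacle will be the joint passage to the limit in Step three: Proposition \ref{prop:tovazzi} as stated only supplies weak convergence of the integrated drift, while the martingale characterization requires this convergence jointly with the past of the path $\trhoepn|_{[0,s]}$ that enters $\Phi$. I would handle this by first noting that the proof of Proposition \ref{prop:tovazzi} in \cite{dai2019dynamics} actually gives convergence in probability of the integral for each fixed $t$ (after the joint tightness of $(\rho_n,\int_0^\cdot\mathcal{G})$ is established), and that this upgrades to joint weak convergence of the pair $\bigl(\trhoepn, \int_0^\cdot G^{\epsilon_n}(\trhoepn,\tthetaepn)\,\de r\bigr)$ by continuous mapping, after which Skorokhod representation yields the required almost-sure joint convergence. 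The uniform bound $|f|+|\Phi|\leq C$ and the uniform $L^1$ bound on the drift integrand on $[\delta,N]$ give the uniform integrability needed to commute expectation with the limit.
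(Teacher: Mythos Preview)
Your proposal does not address the stated proposition. Proposition~\ref{prop:tovazzi} is the abstract averaging principle itself: given the assumptions (1) and (2), one must show that $\int_0^T \mathcal{G}(\rho_n(s), cn^\xi s + \varphi_n(s))\,\de s$ converges weakly to $\int_0^T \bar{\mathcal{G}}(\rho^*(s))\,\de s$. Your write-up instead proves Proposition~\ref{prop:martingale} (that $M_{\delta,N}^{*f}$ is a martingale), and explicitly \emph{invokes} Proposition~\ref{prop:tovazzi} as a black box in the middle of Step~2. You are using the statement you were asked to prove as an ingredient, which is circular.

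Note also that the paper does not supply a proof of Proposition~\ref{prop:tovazzi} at all: it is quoted verbatim from \cite[Proposition~3.2]{dai2019dynamics} and used as a tool. So there is no in-paper argument to compare against; a genuine proof would require the partition-and-freeze argument carried out in that reference (splitting $[0,T]$ into short subintervals, freezing $\rho_n$ on each, and using the fast oscillation $cn^\xi s$ together with hypothesis (2) to show that the time average over each subinterval is close to the angular average). If your intention was actually to prove Proposition~\ref{prop:martingale}, then your outline matches the paper's proof of that result quite closely, including the decomposition via generators, the verification of hypothesis (2) via Proposition~\ref{prop:average condition}, and the passage to the limit in the martingale identity; but that is a different target.
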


The previous proposition is applied to $\rho_n(t)=\rho^{\epsilon_n}(t\wedge\tilde{\tau}_{\delta, N}^{'\epsilon})$ and to $\varphi_n(t)=\varphi^{\epsilon_n}(t\wedge\tilde{\tau}_{\delta, N}^{'\epsilon_n})$, where $\varphi^{\epsilon_n}(t):=\tthetaepn(t)-\emm_n t$. The first hypothesis is immediate to check, the second hypothesis is verified using the following proposition.

\begin{proposition}	\label{prop:average condition}
	For any $\bar{k}>0$, there exist $C>0$ and $\bar{n}\geq 1$ such that for $n \geq \bar{n}$:
	\begin{equation*}
		\sup_{0\leq k \leq\bar{k}}\Expect\Big[\big|\varphi^{\epsilon_n}\big((t+k)\wedge\tilde{\tau}_{\delta, N}^{'\epsilon_n}\big)-\varphi^{\epsilon_n}\big(t\wedge\tilde{\tau}_{\delta, N}^{'\epsilon_n}\big)\big|\Big]\leq C\sqrt{\bar{k}}.
	\end{equation*}
\end{proposition}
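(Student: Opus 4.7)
The plan is to read off the SDE satisfied by $\varphi^{\epsilon_n}$ directly from \eqref{sde:polar2_n2}: since $\varphi^{\epsilon_n}(t) = \tthetaepn(t) - \emm_n t$, the deterministic fast rotation cancels and one is left with
\begin{equation*}
\de \varphi^{\epsilon_n}(r) = D^{\epsilon_n}\big(\trhoepn(r),\tthetaepn(r)\big)\de r + \de M_\theta^{\epsilon_n}(r),
\end{equation*}
where the drift
\begin{equation*}
D^{\epsilon_n}(\rho,\theta) = -\frac{1}{\rho^{2}}\bigl[\sin\theta\cos\theta\,(\Sigma_1^2-\Sigma_2^2)+\Sigma_{12}\bigl((\cos\theta)^2-(\sin\theta)^2\bigr)\bigr] + \epsilon_n^{1/4}R_{\theta,b}^{\epsilon_n}(\rho,\theta)
\end{equation*}
is uniformly bounded on the set $\{\rho \in [\delta,N]\}$ by a constant $K_1=K_1(\delta,N)$, for every $n$ large enough that the $\euq$-remainder stays controlled. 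Similarly, the instantaneous quadratic variation of $M_\theta^{\epsilon_n}$ is bounded on the same set: on $\{r\leq \tilde{\tau}_{\delta,N}^{'\epsilon_n}\}$,
\begin{equation*}
\frac{\de}{\de r}\langle M_\theta^{\epsilon_n}\rangle(r) \leq K_2(\delta,N),
\end{equation*}
thanks once again to $\trhoepn(r)\geq \delta$ and the boundedness of the $\epsilon_n^{1/4}$ correction $R_{\theta,\sigma}^{\epsilon_n}$.

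Given these two bounds, the desired estimate is obtained by splitting the increment. Writing $\tau_1 = t\wedge \tilde{\tau}_{\delta,N}^{'\epsilon_n}$ and $\tau_2 = (t+k)\wedge \tilde{\tau}_{\delta,N}^{'\epsilon_n}$ (both stopping times with $\tau_2-\tau_1\leq k$), we have
\begin{equation*}
\varphi^{\epsilon_n}(\tau_2) - \varphi^{\epsilon_n}(\tau_1) = \int_{\tau_1}^{\tau_2} D^{\epsilon_n}\bigl(\trhoepn(r),\tthetaepn(r)\bigr)\de r + \bigl(M_\theta^{\epsilon_n}(\tau_2)-M_\theta^{\epsilon_n}(\tau_1)\bigr).
\end{equation*}
The drift integral is pathwise bounded by $K_1 k$, contributing at most $K_1 k \leq K_1 \sqrt{\bar k}\,\sqrt{k}$ to the expected absolute increment. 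For the martingale increment, the Optional Sampling Theorem together with Itô's isometry yields
\begin{equation*}
\Expect\bigl[(M_\theta^{\epsilon_n}(\tau_2)-M_\theta^{\epsilon_n}(\tau_1))^2\bigr] = \Expect\bigl[\langle M_\theta^{\epsilon_n}\rangle(\tau_2)-\langle M_\theta^{\epsilon_n}\rangle(\tau_1)\bigr] \leq K_2\, k,
\end{equation*}
and Jensen's inequality then gives $\Expect[|M_\theta^{\epsilon_n}(\tau_2)-M_\theta^{\epsilon_n}(\tau_1)|] \leq \sqrt{K_2 k}$. Summing the two contributions produces the claimed bound $C\sqrt{\bar{k}}$ uniformly for $0\leq k\leq \bar{k}$ and $n\geq\bar n$, with $C=C(\delta,N,\bar{k})$.

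There is no serious obstacle: the whole point of stopping at $\tilde{\tau}_{\delta,N}^{'\epsilon_n}$ is precisely to keep $\trhoepn$ bounded away from $0$ and $\infty$, which makes the coefficients of the SDE for $\varphi^{\epsilon_n}$ uniformly bounded. The only small care needed is to verify that the $\euq$-remainder terms $R_{\theta,b}^{\epsilon_n}$ and $R_{\theta,\sigma}^{\epsilon_n}$, which are constructed from Taylor expansions of $\sigma_Q'$ and $f^{(4)}$ near the origin, are indeed bounded on $\{\trhoepn\in[\delta,N]\}$ uniformly in $n$; this is immediate once one recalls that their arguments are $(\epsilon_n^{1/4}\trhoepn,\tthetaepn)$ which lie in a compact set for $n$ large. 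The choice $\bar n$ is then dictated by requiring $\epsilon_n^{1/4}N \leq r_0$, where $r_0$ is the neighbourhood size on which the normal form of Section~\ref{sec:normal form} is valid.
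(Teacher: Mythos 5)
Your proposal is correct and follows essentially the same route as the paper: decompose the increment of $\varphi^{\epsilon_n}$ into a drift integral and a martingale increment, bound the drift pathwise by a constant times $k$ using the boundedness of the coefficients on $\{\trhoepn\in[\delta,N]\}$, control the martingale increment in $L^2$ via the Optional Sampling Theorem and pass to $L^1$ via Jensen (the paper says ``Holder''), then combine using $k\leq\sqrt{\bar k}\sqrt{k}$ (the paper uses the equivalent $\bar k\leq\sqrt{\bar k}$ for $\bar k<1$). The added remark about uniform boundedness of the $\epsilon_n^{1/4}$-remainders for $n$ large enough, thanks to stopping and compactness of the arguments, is a useful clarification that the paper leaves implicit.
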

\begin{proof}
	By definition
	\begin{align} \nonumber
		\varphi^{\epsilon_n}\big((t+k)\wedge\tilde{\tau}_{\delta, N}^{'\epsilon_n}\big)   = & \int_{0}^{(t+k)\wedge\tilde{\tau}_{\delta, N}^{'\epsilon_n}} \Bigg\{ -\frac{1}{(\trhoepn(r))^2}\Big[\sin\tthetaepn(r) \cos\tthetaepn(r)\big(\Sigma_{1}^2-\Sigma_{2}^2\big) \\ \nonumber
		& +\Sigma_{12}\big(\cos\tthetaepn(r)\big)^2-\big(\sin\tthetaepn(r)\big)^2\big)\Big] +\euq R_{\theta,b}^{\epsilon_n}\big(\trhoepn(r),\tthetaepn(r)\big)\ \Bigg\} \de t \\ \label{eq:defphi}
		&+ M_\theta\big((t+k)\wedge\tilde{\tau}_{\delta, N}^{'\epsilon_n}\big).
	\end{align}	
	Then 
	\begin{align*}
		\varphi^{\epsilon_n}\big((t+k)\wedge\tilde{\tau}_{\delta, N}^{'\epsilon_n}\big) - \varphi^{\epsilon_n}\big(t\wedge\tilde{\tau}_{\delta, N}^{'\epsilon_n}\big)  = & \int_{t\wedge\tilde{\tau}_{\delta, N}^{'\epsilon_n}}^{(t+k)\wedge\tilde{\tau}_{\delta, N}^{'\epsilon_n}}I\big(\trhoepn(r),\tthetaepn(r)\big) \de r \\
		&+M_{\theta}\big((t+k)\wedge\tilde{\tau}_{\delta, N}^{'\epsilon_n}\big)-M_{\theta}\big(t\wedge\tilde{\tau}_{\delta, N}^{'\epsilon_n}\big),
	\end{align*}	
	where $I\big(\trhoepn(r),\tthetaepn(r)\big)$ is the function in the integral in \eqref{eq:defphi}. 
	
	By the regularity of the coefficients and the stopping time there exists a positive constant $C(\delta, N)$ such that, for each $r\in[0,T\wedge\tilde{\tau}_{\delta, N}^{'\epsilon_n}]$, the function  $I\big(\trhoepn(r),\tthetaepn(r)\big)\leq C(\delta, N)$ for every $n \geq 1$.  So
	\begin{equation} \label{ap1}
		\big|\varphi^{\epsilon_n}\big((t+k)\wedge\tilde{\tau}_{\delta, N}^{'\epsilon_n}\big)-\varphi^{\epsilon_n}\big(t\wedge\tilde{\tau}_{\delta, N}^{'\epsilon_n}\big)\big| \leq C(\delta, N)k + \left|M_{\theta}\big((t+k)\wedge\tilde{\tau}_{\delta, N}^{'\epsilon_n}\big)-M_{\theta}\big(t\wedge\tilde{\tau}_{\delta, N}^{'\epsilon_n}\big)\right|
	\end{equation}

	By the Optional Sampling Theorem it holds:
	\begin{equation*}
		\Expect\Big[\big|M_{\theta}\big((t+k)\wedge\tilde{\tau}_{\delta, N}^{'\epsilon_n}\big)-M_{\theta}\big(t\wedge\tilde{\tau}_{\delta, N}^{'\epsilon_n}\big)\big|^2 \Big] = \Expect\Big[M_{\theta}\big((t+k)\wedge\tilde{\tau}_{\delta, N}^{'\epsilon_n}\big)^2-M_\theta\big(t\wedge\tilde{\tau}_{\delta, N}^{'\epsilon_n}\big)^2\Big].
	\end{equation*}
	Observe that
	\begin{align*}
		\Expect\Big[M_\theta\big((t+k)\wedge\tilde{\tau}_{\delta, N}^{'\epsilon_n}\big)^2\Big] =  \sum_{i=1}^m\Expect\Biggl[&\int_{0}^{(t+k)\wedge\tilde{\tau}_{\delta, N}^{'\epsilon_n}}\frac{1}{\big(\trhoepn(r)\big)^2} \Big(\bar{\sigma}_{2i}\cos\tthetaepn(r)-\bar{\sigma}_{1i}\sin\tthetaepn(r) \Big)^2 \\
		& + \euq_n \big(\tilde{R}_{\theta,\sigma}^{\epsilon_n}\big)_i\big(\trhoepn(r),\tthetaepn(r)\big)\de r \Biggr],
	\end{align*}
	where the function $\big(\tilde{R}_{\theta,\sigma}^{\epsilon_n}\big)_i$ contains the terms obtained expanding the square that are multiplied by $\euq_n$. Then there exists a constant $C_{\bar{\sigma}}=C_{\bar{\sigma}}(\delta, N)$ such that 
	\begin{equation*}
		\Expect\Big[\big|M_{\theta}\big((t+k)\wedge\tilde{\tau}_{\delta, N}^{'\epsilon_n}\big)-M_{\theta}\big(t\wedge\tilde{\tau}_{\delta, N}^{'\epsilon_n}\big)\big|^2 \Big] \leq k C_{\bar{\sigma}}.
	\end{equation*}
	By Holder inequality 
	\begin{equation} \label{ap2}
		\Expect\Big[\big|M_{\theta}\big((t+k)\wedge\tilde{\tau}_{\delta, N}^{'\epsilon_n}\big)-M_{\theta}\big(t\wedge\tilde{\tau}_{\delta, N}^{'\epsilon_n}\big)\big| \Big] \leq \sqrt{C_{\bar{\sigma}}}\sqrt{k}.
	\end{equation}
	The proof is completed by \eqref{ap1}, \eqref{ap2} and the fact that 
	$\bar{k}<(\bar{k})^{\frac{1}{2}}$, for $\bar{k}<1$. 
\end{proof}

\begin{proof}[Proof of Proposition \ref{prop:martingale}]
	Take $f\in C_c^{\infty}\big([\delta,N]\big)$ and define the following generators: 
	\begin{align*}
		\big(\mathcal{L}f\big)(\rho,\theta)& =b(\rho,\theta)f'(\rho)+\frac{1}{2}\sum_{i=1}^m\big[\bar{\sigma}_{1i}\cos\theta+\bar{\sigma}_{2i}\sin\theta\big]^2	f''(\rho);\\
		\big(\mathcal{L}_{\delta,N}^{\epsilon_n} f\big)(\rho,\theta) &=\Big\{\big[b(\rho,\theta)+\euq_n R_{\rho,b}^{\epsilon_n}(\rho,\theta)\big] f'(\rho) \\
		&\qquad\qquad +\frac{1}{2} \sum_{i=1}^m \Big[\bar{\sigma}_{1i}\cos\theta +\bar{\sigma}_{2i}\sin\theta+\euq_n \big(R_{\rho,\sigma}^{\epsilon_n}\big)_i\rho\big(\rho,\theta\big) \Big]^2 f''(\rho)\Big\}\mathbbm{1}_{(\delta,N)}.
	\end{align*}
	Observe that $\big(\bar{\mathcal{L}}f\big)(\rho)=\frac{1}{2\pi}\int_0^{2\pi}\big(\mathcal{L}f\big)(\rho,\theta)\de\theta$. Define the processes
	\begin{align*}
		M_{\delta,N}^f(t)&=f\big(\trhoepn(t\wedge\tilde{\tau}_{\delta, N}^{'\epsilon_n})\big)-f\big(\trhoepn(0)\big)-\int_{0}^{t\wedge\tilde{\tau}_{\delta, N}^{'\epsilon_n}}\big(\mathcal{L}_{\delta,N}^{\epsilon_n}f\big)\big(\trhoepn(r),\tthetaepn(r)\big)\de r;\\
		N_{\delta,N}^f(t)&=f\big(\trhoepn(t\wedge\tilde{\tau}_{\delta, N}^{'\epsilon_n})\big)-f\big(\trhoepn(0)\big)-\int_{0}^{t\wedge\tilde{\tau}_{\delta, N}^{'\epsilon_n}}\big(\mathcal{L}f\big)\big(\trhoepn(r),\tthetaepn(r)\big)\de r.
	\end{align*}
	Observe that $\big(\mathcal{L}_{\delta,N}^{\epsilon_n}f\big)(\rho,\theta)=\big(\mathcal{L}f\big)(\rho,\theta)+o(\euq_n)$. By construction $M_{\delta,N}^f$ is a martingale, fix $m\geq 1$, $g_1,\ldots,g_m$ continuous bounded functions and $0\leq t_1\leq\ldots\leq t_m\leq s \leq t \leq T$, then
	\begin{equation*}
		\Expect \left[\left(M_{\delta,N}^f(t)-	M_{\delta,N}^f(s)\right)g_1\big(\trhoepn(t_1\wedge\tilde{\tau}_{\delta, N}^{'\epsilon_n})\big)\cdot\ldots\cdot g_m\big(\trhoepn(t_m\wedge\tilde{\tau}_{\delta, N}^{'\epsilon_n})\big)\right]=0.
	\end{equation*}
	Which implies
	\begin{equation}\label{eq:martingale-error}
		\Expect \left[\left(N_{\delta,N}^f(t)-	N_{\delta,N}^f(s)\right)g_1\big(\trhoepn(t_1\wedge\tilde{\tau}_{\delta, N}^{'\epsilon_n})\big)\cdot\ldots\cdot g_m\big(\trhoepn(t_m\wedge\tilde{\tau}_{\delta, N}^{'\epsilon_n})\big)\right]=o(\epsilon_n^{1/2}).
	\end{equation}
	The left hand side of equation \eqref{eq:martingale-error} can be written as 
	\begin{align}\label{eq:finitdistiP1}
		&\Expect \Big[\Big(f\big(\trhoepn(t\wedge\tilde{\tau}_{\delta, N}^{'\epsilon_n})\big)-f\big(\trhoepn(s\wedge\tilde{\tau}_{\delta, N}^{'\epsilon_n})\Big) g_1\big(\trhoepn(t_1\wedge\tilde{\tau}_{\delta, N}^{'\epsilon_n})\big)\cdot\ldots\cdot g_m\big(\trhoepn(t_m\wedge\tilde{\tau}_{\delta, N}^{'\epsilon_n})\big)\Big]\\
		&-\Expect \Big[\int_{s\wedge\tilde{\tau}_{\delta, N}^{'\epsilon_n}}^{t\wedge\tilde{\tau}_{\delta, N}^{'\epsilon_n}}\big(\mathcal{L}f\big)\big(\trhoepn(u),\tthetaepn(u)\big)\de u\ g_1\big(\trhoepn(t_1\wedge\tilde{\tau}_{\delta, N}^{'\epsilon_n})\big)\cdot\ldots\cdot g_m\big(\trhoepn(t_m\wedge\tilde{\tau}_{\delta, N}^{'\epsilon_n})\big)\Big] \label{eq:finitdistiP2}
		=o(\euq_n)
	\end{align}
	By weak convergence of $\trhoepn(t\wedge\tilde{\tau}_{\delta, N}^{'\epsilon_n})$ to $\rho^*_{\delta,N}(t)$ the term in \eqref{eq:finitdistiP1} converges to 
	$$\Expect \left[\left(f\big(\rho^*_{\delta,N}(t)\big)-f\big(\rho^*_{\delta,N}(s)\big)\right)g_1\big(\trhoepn(t_1\wedge\tilde{\tau}_{\delta, N}^{'\epsilon_n})\big)\cdot\ldots\cdot g_m\big(\trhoepn(t_m\wedge\tilde{\tau}_{\delta, N}^{'\epsilon_n})\big)\right].$$
	Since $f$ has compact support 
	\begin{equation*}
		\int_{s\wedge\tilde{\tau}_{\delta, N}^{'\epsilon_n}}^{t\wedge\tilde{\tau}_{\delta, N}^{'\epsilon_n}}\big(\mathcal{L}f\big)\big(\trhoepn(r),\tthetaepn(r)\big)\de r = \int_{s}^{t}\big(\mathcal{L}f\big)\big(\trhoepn(r\wedge\tilde{\tau}_{\delta, N}^{'\epsilon_n}),\tthetaepn(r\wedge\tilde{\tau}_{\delta, N}^{'\epsilon_n})\big)\de r.
	\end{equation*}
	Taking $\rho_n(t)=\trhoepn(t\wedge\tilde{\tau}_{\delta, N}^{'\epsilon_n})$, $\bar{\rho}(t)=\rho^*_{\delta,N}(t)$ and $\varphi_n(t)=\varphi^{\epsilon_n}(t\wedge\tilde{\tau}_{\delta, N}^{'\epsilon_n})$ proposition \ref{prop:tovazzi} can be applied to have 
	\begin{equation*}
		\int_{s}^{t}\big(\mathcal{L}f\big)\big(\trhoepn(r\wedge\tilde{\tau}_{\delta, N}^{'\epsilon_n}),\tthetaepn(r\wedge\tilde{\tau}_{\delta, N}^{'\epsilon_n})\big)\de r \to \int_{0}^{t}\big(\bar{\mathcal{L}}f\big)\big(\rho^*_{\delta,N}(r)\big)\de r.
	\end{equation*}
	By the bounded convergence theorem the proof is completed.
\end{proof}

Fix $\zeta>0$ and let $\rho^*_{\delta,N}$ be the weak limit of the sequence of stopped processes $\{\rho^{\epsilon_n}(\cdot\wedge\tilde{\tau}_{\delta, N}^{'\epsilon})\}_{n\geq 1}$. Define the stopping time \begin{equation*}
	\tau^{*}_{\delta, N,\zeta}=\inf\{t>0:\ \rho^*_{\delta,N}(t)\notin(\delta+\zeta,N-\zeta)\}.
\end{equation*}
Let $\bar{\rho}$ be the unique solution to the stochastic differential equation \eqref{eq:limit process2ndimensional} and define the stopping time 
\begin{equation*} 
	\bar{\tau}_{\delta, N,\zeta}=\inf\{t>0:\ \bar{\rho}(t)\notin(\delta+\zeta,N-\zeta)\}.
\end{equation*}

\begin{proposition}\label{prop:samelaw}
	Suppose that $\rho^*_{\delta,N}(0)=\bar{\rho}(0)$. Then, the processes $\rho^*_{\delta,N}(t\wedge\tau^{*}_{\delta, N,\zeta})$ and $\bar{\rho}(t\wedge\bar{\tau}_{\delta, N,\zeta})$ have the same distribution.
\end{proposition}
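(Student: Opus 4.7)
The plan is to reduce the problem to uniqueness of the martingale problem for the generator $\bar{\mathcal{L}}$ on the compact interval $[\delta+\zeta, N-\zeta]$, where the drift $\bar{b}$ is Lipschitz and the diffusion coefficient $\sqrt{(\Sigma_1^2+\Sigma_2^2)/2}$ is a uniformly positive constant. If I can show that both $\rho^*_{\delta,N}(\,\cdot\, \wedge \tau^*_{\delta,N,\zeta})$ and $\bar\rho(\,\cdot\, \wedge \bar\tau_{\delta,N,\zeta})$ solve the same martingale problem on this interval, the shared initial condition combined with well-posedness will give equality in law.

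To obtain a rich family of test functions for the stopped limit process, I would take an arbitrary $f \in C^2(\mathbb{R})$ and choose a cutoff $\phi \in C_c^\infty\big((\delta, N)\big)$ with $\phi \equiv 1$ on $[\delta+\zeta, N-\zeta]$. Then $f\phi \in C_c^\infty([\delta, N])$, and Proposition \ref{prop:martingale} asserts that $M^{*,f\phi}_{\delta, N}$ is a martingale under the law of $\rho^*_{\delta,N}$. Applying the optional stopping theorem at the bounded stopping time $t \wedge \tau^*_{\delta,N,\zeta}$, and observing that on $\{s \leq \tau^*_{\delta,N,\zeta}\}$ one has $\rho^*_{\delta,N}(s) \in [\delta+\zeta, N-\zeta]$ and hence $\bar{\mathcal{L}}(f\phi)(\rho^*_{\delta,N}(s)) = \bar{\mathcal{L}}f(\rho^*_{\delta,N}(s))$, I would conclude that
\begin{equation*}
f\big(\rho^*_{\delta,N}(t \wedge \tau^*_{\delta, N, \zeta})\big) - f\big(\rho^*_{\delta,N}(0)\big) - \int_0^{t \wedge \tau^*_{\delta, N, \zeta}} (\bar{\mathcal{L}}f)\big(\rho^*_{\delta,N}(s)\big)\, \de s
\end{equation*}
is a martingale, for every $f \in C^2(\mathbb{R})$.

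For $\bar\rho$, Ito's formula applied to \eqref{eq:limit process2ndimensional} together with optional stopping at $\bar\tau_{\delta, N, \zeta}$ immediately give that the analogous expression, with $\bar\rho$ in place of $\rho^*_{\delta,N}$ and $\bar\tau_{\delta, N, \zeta}$ in place of $\tau^*_{\delta, N, \zeta}$, is a martingale. Both stopped processes are then continuous, take values in $[\delta+\zeta, N-\zeta]$, coincide at time zero and solve the same martingale problem for $\bar{\mathcal{L}}$ with absorption at the boundary of $(\delta+\zeta, N-\zeta)$. Since on $[\delta+\zeta, N-\zeta]$ the SDE \eqref{eq:limit process2ndimensional} has Lipschitz drift and constant non-degenerate diffusion, Lemma \ref{lemma:uniquesolution} combined with the Stroock--Varadhan equivalence between solutions to the martingale problem and weak solutions of the SDE yields uniqueness in law of this stopped martingale problem, from which the desired distributional identity follows.

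The main technical point will be the optional stopping step and the fact that the cutoff $\phi$ is genuinely harmless inside the stopping window. Neither is delicate: boundedness of $f$, $f'$, $f''$ on $[\delta+\zeta, N-\zeta]$ gives the required integrability of $M^{*,f\phi}_{\delta, N}$, and after $\tau^*_{\delta,N,\zeta}$ the stopped integrand is irrelevant. No additional estimate beyond Proposition \ref{prop:martingale} and Lemma \ref{lemma:uniquesolution} is needed.
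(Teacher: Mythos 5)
Your proposal is correct and takes essentially the same route as the paper: the paper extends $g\in C_0^\infty([0,\infty))$ to some $f\in C_c^\infty([\delta,N])$ agreeing with $g$ on $[\delta+\zeta,N-\zeta]$, which plays exactly the role of your cutoff $\phi$; it then applies Proposition \ref{prop:martingale} and optional stopping at $\tau^*_{\delta,N,\zeta}$, uses Ito's formula to see that $\bar\rho(\,\cdot\,\wedge\bar\tau_{\delta,N,\zeta})$ solves the same stopped martingale problem, and invokes uniqueness. The only cosmetic difference is that you should take $f$ smooth (not merely $C^2$) so that $f\phi\in C_c^\infty$ as required by Proposition \ref{prop:martingale}; with that, the arguments coincide.
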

\begin{proof}
	Given $g\in C_0^\infty\big([0,\infty)\big)$ and $\zeta>0$, there exists a function $f\in C_c^\infty\big([\delta,N]\big)$ such  that $f(x)=g(x)$, $\forall x\in[\delta+\zeta,N-\zeta]$. Then
	\begin{equation*}
		g\big(\rho^*_{\delta,N}(t\wedge\tau^{*}_{\delta, N,\zeta})\big)-g\big(\rho^*_{\delta,N}(0)\big)-\int_{0}^{t\wedge\tau^{*}_{\delta, N,\zeta}}\big(\bar{\mathcal{L}}g\big)\big(\rho^*_{\delta,N}(r\wedge\tau^{*}_{\delta, N,\zeta})\big)\de r
	\end{equation*}
	is a martingale. By definition, the process $\bar{\rho}(t\wedge\bar{\tau}_{\delta, N,\zeta})$ satisfies the stopped martingale problem for $\bar{\mathcal{L}}$, in $(\delta+\zeta,N-\zeta)$. Then the stopped martingale problem is well defined and admits a unique solution. The set $C_0^\infty\big([0,\infty)\big)$ is measure-determining then  $\rho^*_{\delta,N}(t\wedge\tau^{*}_{\delta, N,\zeta})$ and $\bar{\rho}(t\wedge\bar{\tau}_{\delta, N,\zeta})$ are the solutions to the same martingale problem and by uniqueness they must have the same distribution. 
\end{proof}

The following theorem concludes this section.
\begin{theorem}\label{teo:convergencerho}
	Let $(\trhoep,\tthetaep)$ be the solution to the equations \eqref{sde:polar2_n1},\eqref{sde:polar2_n2} and initial conditions $(\rho_0^\epsilon, \theta_0^\epsilon)$, with $(\rho_0^\epsilon, \theta_0^\epsilon)\to(\rho_0, \theta_0)$, as $\epsilon\to0$. The process $\big(\trhoep(t)\big)_{t\in[0,T]}$ converges in distribution to the unique solution to the equation \eqref{eq:limit process2ndimensional} with initial condition $\rho_0$.
\end{theorem}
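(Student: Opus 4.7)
The plan is to combine the preparatory results on tightness, the martingale characterization, and uniqueness established in the previous lemmas and propositions, and then to remove the stopping at $\tilde{\tau}_{\delta,N}^{'\epsilon}$ by sending $\delta\downarrow 0$ and $N\uparrow\infty$. Fix an arbitrary sequence $\epsilon_n\downarrow 0$ and any $0<\delta<N<\infty$. By Lemma \ref{lemma:tight}, the stopped family $\{\trhoepn(\cdot\wedge\tilde{\tau}_{\delta,N}^{'\epsilon_n})\}_n$ is tight and hence relatively compact; if $\rho^*_{\delta,N}$ is a subsequential weak limit, Proposition \ref{prop:martingale} identifies it as a solution of the martingale problem for $\bar{\mathcal{L}}$. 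Combined with Proposition \ref{prop:samelaw} and the uniqueness of the solution to \eqref{eq:limit process2ndimensional} (Lemma \ref{lemma:uniquesolution}), this forces $\rho^*_{\delta,N}(\cdot\wedge\tau^*_{\delta,N,\zeta})\stackrel{d}{=}\bar{\rho}(\cdot\wedge\bar{\tau}_{\delta,N,\zeta})$ for every $\zeta>0$, and therefore, letting $\zeta\downarrow 0$, on the whole annulus.

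To pass from these stopped statements to unstopped convergence on $[0,T]$, I would establish that
\[
\lim_{\substack{\delta\downarrow 0\\ N\uparrow\infty}}\sup_{n}\mathsf{P}\!\left(\tilde{\tau}_{\delta,N}^{'\epsilon_n}\leq T\right)=0.
\]
For the outer bound $N\uparrow\infty$, the argument mirrors the one already used in Lemma \ref{lemma:uniquesolutionZe}: the dissipative contribution $-(\trhoep)^3$ in the drift of \eqref{sde:polar2_n1} dominates the remaining locally bounded terms, so applying It\^o's formula to $(\trhoepn)^2$ yields a uniform-in-$n$ moment estimate and hence a uniform tail bound for $\sup_{[0,T]}\trhoepn$. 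For the inner bound $\delta\downarrow 0$, the positive term $\frac{1}{2}\Sigma_1^2(\sin\tthetaep)^2/\trhoep+\frac{1}{2}\Sigma_2^2(\cos\tthetaep)^2/\trhoep+\Sigma_{12}\sin\tthetaep\cos\tthetaep/\trhoep$ in the drift of \eqref{sde:polar2_n1} is strongly repulsive near the origin, and the remainder $\euq R_{\rho,b}^\epsilon$ is locally bounded; applying It\^o's formula to $\log\trhoepn$, or equivalently comparing with a Bessel-type process of dimension determined by $\Sigma_1^2+\Sigma_2^2$, yields the required uniform-in-$n$ lower control.

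Once these two bounds are in place, the conclusion follows by a standard approximation argument. The process $\trhoepn$ differs from $\trhoepn(\cdot\wedge\tilde{\tau}_{\delta,N}^{'\epsilon_n})$ with probability that is small uniformly in $n$, and the analogous statement for $\bar{\rho}$ follows from Lemma \ref{lemma:uniquesolution}; hence along every subsequence $\trhoepn$ converges weakly to $\bar{\rho}$, and the full convergence in $\epsilon$ follows from the arbitrariness of the subsequence. I expect the main technical obstacle to lie in the inner estimate $\delta\downarrow 0$: the $1/\trhoep$ singularity must be controlled uniformly in $\epsilon$ despite the presence of the $\euq R_{\rho,b}^\epsilon$ perturbation, and one has to verify that the averaged repulsion of order $(\Sigma_1^2+\Sigma_2^2)/(4\trhoep)$ is already effective at the pre-limit level; the boundedness properties of $R_{\rho,b}^\epsilon$ on compact subsets of $(0,\infty)\times\mathbb{R}$ make this possible, but the bookkeeping is delicate.
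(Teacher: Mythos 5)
Your overall plan (tightness $\to$ martingale characterization $\to$ uniqueness $\to$ remove the stopping at $\tilde{\tau}_{\delta,N}^{'\epsilon}$) matches the paper's scaffolding up to the point where the stopping must be removed, but there you diverge in a way that introduces a genuine gap. You propose to prove
\[
\lim_{\substack{\delta\downarrow 0\\ N\uparrow\infty}}\sup_{n}\mathsf{P}\!\left(\tilde{\tau}_{\delta,N}^{'\epsilon_n}\leq T\right)=0
\]
by a priori estimates on the pre-limit process $\trhoepn$ itself, using It\^o on $\log\trhoepn$ or a Bessel comparison ``of dimension determined by $\Sigma_1^2+\Sigma_2^2$'' for the inner boundary. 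This is not what the paper does, and the step you single out as ``delicate bookkeeping'' is in fact the place where the argument would break. The repulsive part of the drift of $\trhoepn$ near the origin is $\frac{1}{\trhoep}$ times an angular quadratic form which is only nonnegative, not bounded below: writing it as $\frac{1}{2\trhoep}\sum_j(\bar\sigma_{1j}\sin\tthetaep-\bar\sigma_{2j}\cos\tthetaep)^2$, it vanishes at some angle $\theta$ whenever $\bar\sigma\bar\sigma^{\mathsf T}$ is degenerate, which assumptions \textbf{H2} do not exclude. More tellingly, if you actually apply It\^o to $\log\trhoepn$, the leading drift near zero is $\big(c(\theta)-\tfrac{1}{2}w(\theta)\big)/(\trhoep)^2$ with $c(\theta)-\tfrac{1}{2}w(\theta)=-\tfrac{1}{2}\big((\Sigma_1^2-\Sigma_2^2)\cos 2\theta+2\Sigma_{12}\sin 2\theta\big)$, a quantity that oscillates in sign in $\theta$; it only becomes the Bessel-type repulsion after averaging over $\theta$. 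At the pre-limit level the fast variable has not been averaged out, Proposition \ref{prop:tovazzi} only gives you averaging of time integrals (not escape probabilities), and there is no pointwise comparison with a Bessel process. So the inner uniform-in-$n$ estimate you require is not a technical nuisance but the actual hard part, and your sketch does not supply it.

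The paper avoids this difficulty entirely by not proving any such uniform estimate on the pre-limit process. Instead, it observes that the event $\{\tilde{\tau}_{\delta_m,N_m}^{'\epsilon_n}\leq T\}$ coincides with $\{\trhoepn(\cdot\wedge\tilde{\tau}_{\delta_m,N_m}^{'\epsilon_n})\in A_m^c\}$ for the \emph{stopped} path, that $A_m^c$ is closed, and hence by Portmanteau together with Proposition \ref{prop:samelaw},
\[
\limsup_{n\to\infty}\mathsf{P}\!\left(\tilde{\tau}_{\delta_m,N_m}^{'\epsilon_n}\leq T\right)\leq \mathsf{P}\!\left(\bar{\tau}_{I_m}\leq T\right),
\]
transferring the tail bound from the limit process $\bar\rho$ (for which Lemma \ref{lemma:uniquesolution} already controls the escape probabilities via the 2d-Brownian-motion argument) back to the sequence. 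The final bound is then obtained by a five-term telescoping of $\big|\Expect[f(\trhoepn)]-\Expect[f(\bar\rho)]\big|$ in which each of the three ``unstopping'' terms is controlled by one of these escape probabilities and the central term by the established stopped weak convergence. You should adopt this transfer-by-Portmanteau mechanism rather than attempt a direct pre-limit estimate; otherwise you would effectively need to re-prove an averaging principle for exit probabilities, a substantially harder and unnecessary task.

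Two minor points: your statement of the boundary removal should be a $\limsup_n$ (not a $\sup_n$) once you use Portmanteau; and letting $\zeta\downarrow 0$ after Proposition \ref{prop:samelaw} is correct but must be coordinated with the choice of $\delta_m,N_m$ as in the paper's nested sequence $I_m=(\delta_m+\zeta_m,N_m-\zeta_m)$.
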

\begin{proof}
	Define the following sequences of positive parameters:
	\begin{itemize}
		\item $\{\delta_m\}_m$, $\{\zeta_m\}_m$ , with $r_m\to 0$ and $\zeta_m\to 0$ as $m\to\infty$;
		\item $\{N_m\}_m$, with $N_m\to\infty$ as $m\to\infty$,
	\end{itemize}
	such that $\delta_m+\zeta_m<N_m-\zeta_m$. Take the interval $I_m=(\delta_m+\zeta_m,N_m-\zeta_m)$ and, for any sequence $\{\epsilon_n\}_n$, $\epsilon_n\to0$ define the following stopping times:
	\begin{itemize}
		\item $\tau_{I_m}^{\epsilon_n}=\inf\{t>0:\ \trhoepn(t)\notin I_m\}$;
		\item $\tau^*_{I_m}=\inf\{t>0:\ \rho_{\delta_m,N_m}^*(t)\notin I_m\}$;
		\item $\bar{\tau}_{I_m}=\inf\{t>0:\ \bar{\rho}(t)\notin I_m\}$.
	\end{itemize}
	Define the event 
	$$
	A_m=\left\{x\in C\big([0,T],\mathbb{R}\big):\ x(t)\in I_m,\ \forall t\in[0,T]\right\}.
	$$
	Observe that $A_m^c$ is closed in the uniform topology of $C\big([0,T],\mathbb{R}\big)$. Then, for any $m\geq 1$ and any sequence $\epsilon_n \rightarrow 0$, by Portmanteau Theorem,
	\begin{equation*}
		\limsup_{n\to+\infty}\mathsf{P}\left(\trhoepn(\cdot\wedge\tau_{\delta_m,N_m}^{\epsilon_n})\in A_m^c\right) \leq \mathsf{P}\left(\rho_{\delta_m,N_m}^*(t)\in A_m^c\right).
	\end{equation*}
	Using proposition \ref{prop:samelaw} it holds 
	\begin{equation*}
		\mathsf{P}\big(\rho_{\delta_m,N_m}^*(\cdot)\in A_m^c\big)=\mathsf{P}\left(\tau^*_{I_m}\leq T\right)=\mathsf{P}\left( \bar{\tau}_{I_m}\leq T\right).
	\end{equation*}
	The interval $I_m$ is a subset of the interval $(\delta_m,N_m)$ then 
	\begin{equation*}
		\mathsf{P}\left(\trhoepn(\cdot\wedge\tau_{\delta_m,N_m})\in A_m^c\right)=\mathsf{P}\left(\tau_{I_m}^{\epsilon_n}\leq T\right)\leq\mathsf{P}\left( \tilde{\tau}_{\delta_m,N_m}^{'\epsilon_n}\leq T\right).
	\end{equation*}
	Then
	\begin{equation*}
		\limsup_{n\to+\infty}\mathsf{P}\left(\tilde{\tau}_{\delta_m,N_m}^{'\epsilon_n}\leq T\right) \leq \mathsf{P}\left(\tau^*_{I_m}\leq T\right)=\mathsf{P}\left( \bar{\tau}_{I_m}\leq T\right).
	\end{equation*}
	Let $f:C\big([0,T],\mathbb{R}\big)\to\mathbb{R}$ be a continuous and bounded function then
	\begin{align}
		\Bigl|\Expect\big[f(\trhoepn(\cdot))\big]-\Expect\big[f(&\bar{\rho}(\cdot))\big] \Bigr| \nonumber \\ \label{eq:i)}
		&\leq \left|\Expect\big[f(\trhoepn(\cdot))\big]-\Expect\big[f(\trhoepn(\cdot\wedge\tilde{\tau}_{\delta_m,N_m}^{'\epsilon_n}))\big]\right| \\\label{eq:ii)}
		&\ + \left|\Expect\big[f(\trhoepn(\cdot\wedge\tilde{\tau}_{\delta_m,N_m}^{'\epsilon_n}))\big]-\Expect\big[f(\rho_{\delta_m,N_m}^*(\cdot))\big]\right| \\\label{eq:iii)}
		&\ + \Bigl|\Expect\big[f(\rho_{\delta_m,N_m}^*(\cdot))\big]-\Expect\big[f(\rho_{\delta_m,N_m}^*(\cdot\wedge\tau^*_{I_m}))\big]\Bigr| \\	\label{eq:iv)}
		&\ + \Bigl|\Expect\big[f(\rho_{\delta_m,N_m}^*(\cdot\wedge\tau^*_{I_m}))\big]- \Expect\big[f(\bar{\rho}(\cdot\wedge\bar{\tau}_{I_m}))\big]\Bigr| \\	\label{eq:v)}
		&\ + \Bigl|\Expect\big[f(\bar{\rho}(\cdot\wedge\bar{\tau}_{I_m}))\big] - \Expect\big[f(\bar{\rho}(\cdot))\big]\Bigr|.
	\end{align}
	Let $\|f\|_\infty$ be the sup norm of the function $f$, then
	\begin{itemize}
		\item \eqref{eq:i)}$\leq \|f\|_\infty \mathsf{P}\left( \tilde{\tau}_{\delta_m,N_m}^{'\epsilon_n}\leq T\right)$;
		\item \eqref{eq:iii)}$\leq \|f\|_\infty \mathsf{P}\left( \tau^*_{I_m}\leq T\right)$;
		\item \eqref{eq:iv)}$=0$ by Proposition \ref{prop:samelaw};
		\item \eqref{eq:v)}$\leq \|f\|_\infty \mathsf{P}\left( \bar{\tau}_{I_m}\leq T\right)$.
	\end{itemize}
	By Lemma \ref{lemma:uniquesolution}, for any $\gamma>0$, there is $m$ large enough such that $\mathsf{P}\left( \bar{\tau}_{I_m}\leq T\right)\leq \gamma$. Moreover, by convergence in distribution, there is $\epsilon_n$ small enough such that \eqref{eq:ii)}$\leq\gamma$ and $\mathsf{P}\left( \tilde{\tau}_{\delta_m,N_m}^{'\epsilon_n}\leq T\right)\leq 2 \gamma$. The proof is complete.
\end{proof}

\section{Proof of Theorem \ref{theor:Main} }\label{sec:proofMain}

In this section the results showed in the previous sections are used to prove Theorem \ref{theor:Main}.\\

\begin{proof}
	Define the following stopping times
	\begin{align*}
		&\tilde{\tau}^{z'}_{\Delta_\epsilon}=\inf\{t\in[0,T]:\, \|\tZpe(t) \|>\Delta_\epsilon\};\\
		&\tilde{\tau}^{z}_{\Delta_\epsilon}=\inf\{t\in[0,T]:\, \|\tZe(t) \|>\Delta_\epsilon\};\\
		&\tau^{z'}_{\Delta_\epsilon}=\inf\{t\in[0,T]:\, \|\Zpe(t) \|>\Delta_\epsilon\};\\
		&\tau^{y'}_{\Delta_\epsilon}=\inf\{t\in[0,T]:\, \|\Ype(t) \|>\Delta_\epsilon\};\\
		&\tau^x_{\Delta_\epsilon}=\inf\{t\in[0,T]:\, \|\big(\Ze(t),\Ye(t)\big)^{\mathsf{T}} \|>\Delta_\epsilon\}.
	\end{align*}
	From Lemma \ref{lemma:uniquesolutionZe} it follows that $\lim_{\epsilon\to 0}\mathsf{P}\big(\tilde{\tau}^{z'}_{\Delta_\epsilon}\leq T\big)=0$. By definition of the transformation $\mathfrak{q}$, see \eqref{diff:q}, and
	$\tilde{\tau}^{z}_{\Delta_\epsilon}=\inf\{t\in[0,T]:\, \|\tZpe(t) + \eum q\big(\tZpe(t)\big)\|>\Delta_\epsilon\}$ then, if $\epsilon$ is small enough,
	\begin{equation*}
		\mathsf{P}\big(\tilde{\tau}^{z}_{\Delta_\epsilon}\leq T\big)\leq \mathsf{P}\big(\tilde{\tau}'_{\Delta_\epsilon/2}\leq T\big)\xrightarrow{\epsilon\to 0} 0.
	\end{equation*}
	Moreover $\lim_{\epsilon\to 0}\mathsf{P}\big(\tau^{z'}_{\Delta_\epsilon}\leq T\big)=0$. Indeed
	\begin{align*}
		\mathsf{P}\big(\sup_{t\in[0,\tau^{z'}_{\Delta_\epsilon}\wedge T]}\|\Zpe(t)\|>\Delta_\epsilon\big) \leq &  \mathsf{P}\big(\sup_{t\in[0,\tau^{z'}_{\Delta_\epsilon}\wedge T]}\|\Zpe(t)-\tZe(t)\|>\frac{\Delta_\epsilon}{2}\big) \\
		&+ \mathsf{P}\big(\sup_{t\in[0, T]}\|\tZe(t)\|>\frac{\Delta_\epsilon}{2}\big)\xrightarrow{\epsilon\to 0} 0,
	\end{align*}
	by Theorem \ref{teo:ped}. Using an analogous argument and Theorem \ref{teo:ued}, it holds $\lim_{\epsilon\to 0}\mathsf{P}\big(\tau^{y'}_{\Delta_\epsilon}\leq T\big)=0$ and then $\lim_{\epsilon\to 0}\mathsf{P}\big(\tau_{\Delta_\epsilon}^\epsilon\leq T\big)=0$, where $\tau_{\Delta_\epsilon}^\epsilon$ is defined in \eqref{def:stoppingZYZ}.  By definition of the transformation $\mathfrak{p}$, see \eqref{diff:p}, if $\epsilon$ is small enough,
	\begin{equation}\label{eq:tauXestimate}
		\begin{aligned}
			\mathsf{P}\big(\tau^x_{\Delta_\epsilon}\leq T\Big) = \mathsf{P}\Big(\inf\{t\in[0,T]:&\, \|\begin{pmatrix*}\Zpe(t) + \euq p\big(\Zpe(t)\big) \\ \Ype(t) \end{pmatrix*}\|>\Delta_\epsilon\}\leq T\Big) \\
			& \leq  \mathsf{P}\big(\tau^{z'}_{\Delta_\epsilon/2}\leq T\big) + \mathsf{P}\big(\tau^{y'}_{\Delta_\epsilon/2}\leq T\big).
		\end{aligned}
	\end{equation}
	Define the processes 
	\begin{equation*}
		\rhoe(t)= \sqrt{\Ze_1(t)^2+\Ze_2(t)^2},\qquad  \rhoep(t)= \sqrt{\Zpe_1(t)^2+\Zpe_2(t)^2},\qquad \trhoe(t) = \sqrt{\tZe_1(t)^2+\tZe_2(t)^2}.
	\end{equation*}
	
	Let $f:C\big([0,T],\mathbb{R}\big)\to\mathbb{R}$ be a continuous and bounded function then
	\begin{equation}\label{eq:finalineq}
		\begin{aligned}
			\Bigl|\Expect\big[f(\rhoe(\cdot))\big]-\Expect\big[f(\bar{\rho}(\cdot))\big] \Bigr| \leq & \left|\Expect\big[f(\rhoe(\cdot))\big]-\Expect\big[f(\rhoe(\cdot\wedge\tau^x_{\Delta_\epsilon}))\big]\right| \\
			&\ + \left|\Expect\big[f(\rhoe(\cdot\wedge\tau^x_{\Delta_\epsilon}))\big]-\Expect\big[f(\rhoep(\cdot\wedge\tau_{\Delta_\epsilon}^\epsilon))\big]\right| \\
			&\ + \Bigl|\Expect\big[f(\rhoep(\cdot\wedge\tau_{\Delta_\epsilon}^\epsilon))\big]-\Expect\big[f(\trhoe(\cdot\wedge\tau_{\Delta_\epsilon}^\epsilon)\big]\Bigr| \\	
			&\ + \Bigl|\Expect\big[f(\trhoe(\cdot\wedge\tau_{\Delta_\epsilon}^\epsilon))\big]- \Expect\big[f(\trhoep(\cdot\wedge\tilde{\tau}^{z'}_{\Delta_\epsilon}))\big]\Bigr| \\		
			&\ + \Bigl|\Expect\big[f(\trhoep(\cdot\wedge\tilde{\tau}^{z'}_{\Delta_\epsilon})\big] - \Expect\big[f(\trhoep(\cdot)\big]\Bigr|\\
			&\ + \Bigl|\Expect\big[f(\trhoep(\cdot)\big] - \Expect\big[f(\bar{\rho}(\cdot))\big]\Bigr|\xrightarrow{\epsilon\to 0} 0.
		\end{aligned}
	\end{equation}
	The convergence follows from Lemma \ref{lemma:uniquesolution}, $\lim_{\epsilon}\sup_{t\in[0,T]}\big|\rhoe(t\wedge\tau^x_{\Delta_\epsilon}))-\rhoep(t\wedge\tau_{\Delta_\epsilon}^\epsilon)\big|=0$, Theorem \ref{teo:ped}, $\lim_{\epsilon}\sup_{t\in[0,T]}\big|\trhoe(t\wedge\tau_{\Delta_\epsilon}^\epsilon))-\trhoep(t\wedge\tilde{\tau}^{z'}_{\Delta_\epsilon})\big|=0$, inequality \eqref{eq:tauXestimate} and Theorem \ref{teo:convergencerho}. 
\end{proof}

\section{Appendix}\label{appendix}

\subsection{Normal forms} \label{section:Normalform}
The function $f$ in equation \eqref{eq:Carr1} is at least quadratic in $z$ and $y$ close to the origin. In this section is showed that there is a smooth change of coordinates such that quadratic terms cancel out. Suppose that matrix $Q$ has the following expression:
\begin{equation*}
	Q=\begin{pmatrix*}0 & -\lambda_0 \\ \lambda_0 & 0\end{pmatrix*}
\end{equation*}
with $\lambda_0>0$. According to the theory of normal form, see \cite{wiggins1996introduction}, the first step consists in passing to complex coordinate using the linear transformation $\mathfrak{l}:\mathbb{R}^2\times\mathbb{R}^{n-2}\to\mathbb{C}^2\times\mathbb{R}^{n-2}$, $\mathfrak{l}(z,y)=(w,\bar{w},y)$, where
\begin{equation}\label{def:changecomplex}
	\begin{pmatrix}
		w \\
		\bar{w}
	\end{pmatrix} 
	= \begin{pmatrix}
		1 & i \\ 1 & -i 
	\end{pmatrix} \begin{pmatrix}
		z_1 \\
		z_2
	\end{pmatrix} 
\end{equation}
and $\bar{w}\in\mathbb{C}$ is the complex conjugate of $w\in\mathbb{C}$. Equation \eqref{eq:Carr1} can be written in the new variables $(w, \bar{w}, y)= \mathfrak{l}\Big(z, y\Big)$:
\begin{align*}
	\de  w=& \left[ \lambda_0i w+\Big(f_1\big( w, \bar{w}, y\big)+if_2\big( w,\bar{w},y\big)\Big) \right]\de t, \\
	\de  \bar{w}=& \left[-\lambda_0i \bar{w}+\Big(f_1\big( w, \bar{w}, y\big)-if_2\big( w,\bar{w}, y\big)\Big) \right]\de t,
\end{align*}
where $(f_1,f_2)^\mathsf{T}=f$ and with an abuse of notation $f(w,\bar{w},y)=f(z,y)$.\\
Define the transformation $\mathfrak{R}:\mathbb{C}^2\times\mathbb{R}^{n-2}\to\mathbb{C}^2\times\mathbb{R}^{n-2}$
\begin{equation*}
	\mathfrak{R}(w',\bar{w}',y) = 
	\begin{pmatrix*}[c]
		w' +\mathfrak{r}(w',\bar{w}',y) \\
		\bar{w}' +\overline{\mathfrak{r}}(w',\bar{w}',y) \\
		y
	\end{pmatrix*}=	\begin{pmatrix*}[c]
		w \\
		\bar{w}\\
		y
	\end{pmatrix*}
\end{equation*}
where the function $\mathfrak{r}(w,\bar{w},y)=\beta_1w^2+\beta_2\bar{w}^2+\beta_{12}w\bar{w}+\sum_{i=1}^{n-2}\alpha_{1i}wy_i+\alpha_{2i}\bar{w}y_i$, for $\beta_{j},\beta_{12},\alpha_{ji}$ real parameters, and $\overline{\mathfrak{r}}$ is the complex conjugate of $\mathfrak{r}$. Let $\mathrm{Id}$ be the identity matrix, then

\begin{equation*}
	\mathrm{D}\mathfrak{R}(w',\bar{w}',y)=\begin{pmatrix*}1+\partial_{w}\mathfrak{r} & \partial_{\bar{w}}\mathfrak{r} & \nabla_{y}\mathfrak{r} \\
		\partial_{w}\mathfrak{r} & 1+\partial_{\bar{w}}\mathfrak{r} & \nabla_{y}\mathfrak{r} \\
		0 & 0 & \mathrm{Id} \end{pmatrix*} = \mathrm{Id} + A,
\end{equation*}
where
\begin{equation*}
	A=	\begin{pmatrix*} \partial_{w}\mathfrak{r} & \partial_{\bar{w}}\mathfrak{r} & \nabla_{y}\mathfrak{r} \\
		\partial_{w}\mathfrak{r} & \partial_{\bar{w}}\mathfrak{r} & \nabla_{y}\mathfrak{r} \\
		0 & 0 & 0\end{pmatrix*}.
\end{equation*}
If $(w',\bar{w}',y)$ is close to zero, define $\mathfrak{R}^{-1}$ as the inverse function of $\mathfrak{R}$, then

\begin{equation*}
	\de w'=D\mathfrak{R}^{-1}(w,\bar{w},y)\begin{pmatrix*}\de w\\ \de\bar{w}\\\de y\end{pmatrix*}.
\end{equation*}
Moreover, 
\begin{equation*}
	D\mathfrak{R}^{-1}(w,\bar{w},y)=\big(D\mathfrak{R}(w',\bar{w}',y)\big)^{-1}=\mathrm{Id} - A + \frac{1}{2}A^2 + R_A,
\end{equation*}
where $R_A$ is the rest of the Taylor expansion in zero. Call
\begin{equation*}
	F_{\pm}(w,\bar{w},y)= f_1\big( w, \bar{w}, y\big)\pm if_2\big( w,\bar{w},y\big)
\end{equation*}
and
\begin{align*}
	\de w' =& \Big[ 
	\big(\lambda_0i\w+F_{+}(w,\bar{w},y)\big)\big(1-\partial_{w}\mathfrak{r}+\frac{1}{2}(\partial_{w}\mathfrak{r})^2+\frac{1}{2}\partial_{w}\mathfrak{r}\partial_{\bar{w}}\mathfrak{r}\big)\\
	&\quad+\big(-\lambda_0i\bar{w}+F_{-}(w,\bar{w},y)\big)\big(-\partial_{\bar{w}}\mathfrak{r} + \frac{1}{2}(\partial_{\bar{w}})^2+\frac{1}{2}\partial_{w}\mathfrak{r}\partial_{\bar{w}}\mathfrak{r}\big)\\ 
	&\quad+ \big(Py+g(w,\bar{w},y)\big)\big(-\nabla_{y}\mathfrak{r} + \nabla_{y}\mathfrak{r}\partial_{w}\mathfrak{r}+\nabla_{y}\mathfrak{r}\partial_{\bar{w}}\mathfrak{r}\big) \\
	&\quad+r_A(w',\bar{w}',y)	\Big] \de t,
\end{align*}
where $\lim_{\|(w,\bar{w},y)\|\to 0}\frac{|r_A(w,\bar{w},y)|}{\|(w,\bar{w},y)\|^3}=0$.  Define the function  $ F_{+}'(w',\bar{w}',y)=F_{+}\circ \mathfrak{R}(w',\bar{w}',y)$ and write
\begin{equation*}
	F_{+}'(w',\bar{w}',y) = F_y^{(2)}(y) +F_+^{(2)}(w',\bar{w}',y) +F_+^R(w',\bar{w}',y),
\end{equation*}
where $F_{y}^{(2)}$ is quadratic in the variable $y$,  and the functions $F_+^{(2)}$, $F_+^R$ contain the other quadratic terms and  the higher order terms, respectively. Then 
\begin{align}
	\de w' =& \Big[ \lambda_0i\w' + F_y^{(2)} +\lambda_0 i \mathfrak{r} - \lambda_0 i \partial_{w}\mathfrak{r} w' + \lambda_0 i \partial_{\bar{w}}\mathfrak{r} \bar{w}' - \nabla_{y}\mathfrak{r}Py +F_+^{(2)}(w',\bar{w}',y)\nonumber \\ \label{eq:wprime}
	& + F^{(3)}(w',\bar{w}',y)+r_A'(w',\bar{w}',y)	\Big] \de t,
\end{align}
where the functions $F^{(3)}$ and $r_A'$ contain the third order terms and higher order terms.

\begin{proposition}\label{prop:solutionp}
	Take $\lambda_0\in\mathbb{R}$, $P\in\mathbb{R}^{n-2\times n-2}$, and $F_{+}^{(2)}:\mathbb{C}^2\times\mathbb{R}^{n-2}\to\mathbb{C}$ as in equation \eqref{eq:wprime}. Then there exists a function $\mathfrak{r}:\mathbb{C}^2\times\mathbb{R}^{n-2}\to\mathbb{C}$, where   $\mathfrak{r}(w,\bar{w},y)=\beta_1w^2+\beta_2\bar{w}^2+\beta_{12}w\bar{w}+\sum_{i=1}^{n-2}\alpha_{1i}wy_i+\alpha_{2i}\bar{w}y_i$, for $\beta_{j},\beta_{12},\alpha_{ji}$ real parameters such that
	\begin{equation}\label{eq:eta}
		\lambda_0i\mathfrak{r} - \lambda_0i\partial_{w}\mathfrak{r}w +\lambda_0i\partial_{\bar{w}}\mathfrak{r}\bar{w}-\nabla_{y}\mathfrak{r}P\y  =- F_+^{(2)}.
	\end{equation} 
\end{proposition}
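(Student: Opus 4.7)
The proposal is to prove this by explicit coefficient matching. The map $\mathfrak{r}$ is prescribed to live in the finite-dimensional space of polynomials spanned by the monomials $w^2,\bar{w}^2,w\bar{w},wy_i,\bar{w}y_i$, so the homological equation \eqref{eq:eta} becomes a finite linear system in the unknown parameters $\beta_1,\beta_2,\beta_{12},\alpha_{1i},\alpha_{2i}$ whose right-hand side is read off from the quadratic part $F_+^{(2)}$.

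Concretely, I would first compute
\begin{align*}
\partial_w\mathfrak{r}\cdot w &= 2\beta_1 w^2+\beta_{12}w\bar w+\textstyle\sum_i\alpha_{1i}wy_i,\\
\partial_{\bar w}\mathfrak{r}\cdot\bar w &= 2\beta_2\bar w^2+\beta_{12}w\bar w+\textstyle\sum_i\alpha_{2i}\bar w y_i,\\
\nabla_y\mathfrak{r}\cdot Py &= w\,(P^{\mathsf T}\vec\alpha_1)\cdot y+\bar w\,(P^{\mathsf T}\vec\alpha_2)\cdot y,
\end{align*}
with $\vec\alpha_j=(\alpha_{j1},\dots,\alpha_{j,n-2})^{\mathsf T}$. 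Plugging these into the left-hand side of \eqref{eq:eta} and collecting monomials yields the coefficients $-\lambda_0 i\beta_1$ on $w^2$, $3\lambda_0 i\beta_2$ on $\bar w^2$, $\lambda_0 i\beta_{12}$ on $w\bar w$, $-(P^{\mathsf T}\vec\alpha_1)_j$ on $wy_j$, and $(2\lambda_0 i\,\mathrm{Id}-P^{\mathsf T})\vec\alpha_2$ on the $\bar w y_j$ block. Matching these with the corresponding coefficients of $-F_+^{(2)}$ gives a decoupled system of five blocks.

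The three scalar equations for $\beta_1,\beta_2,\beta_{12}$ are trivially solvable since $\lambda_0>0$. For the two vector equations I need to invert $P^{\mathsf T}$ and $2\lambda_0 i\,\mathrm{Id}-P^{\mathsf T}$ on $\mathbb{C}^{n-2}$; this is where hypothesis \textbf{H1.3} enters. By assumption all eigenvalues of $P$ (hence of $P^{\mathsf T}$) have strictly negative real part, so in particular $0$ is not an eigenvalue and neither is the purely imaginary value $2\lambda_0 i$. Thus both matrices are non-singular and the vector systems have unique solutions $\vec\alpha_1,\vec\alpha_2$.

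The only delicate point is the statement that the parameters are real. A priori the coefficients of $F_+^{(2)}$ in the variables $(w,\bar w,y)$ are complex, so the solutions $\beta_j,\alpha_{ji}$ one obtains from the linear system are in general complex; this does not affect the validity of the transformation $\mathfrak{R}$ nor the cancellation of the quadratic terms, and in the paper it should be understood that the parameters belong to $\mathbb{C}$ (real-valuedness of the polynomial $\mathfrak{r}+\overline{\mathfrak{r}}$ and of the normal form is ensured by the reality of $(f_1,f_2)$, equivalently by the relation $F_-=\overline{F_+}$, which translates into conjugation symmetry between the $\beta$'s and $\alpha$'s corresponding to $w$ versus $\bar w$). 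I do not anticipate any additional obstacle beyond this bookkeeping and the spectral check on $P$ described above.
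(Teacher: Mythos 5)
Your proof is correct and follows essentially the same route as the paper: both compute the action of the homological operator on the monomial basis $\{w^2,\bar w^2,w\bar w,wy_j,\bar w y_j\}$, observe the block-diagonal structure, and invoke the spectral hypothesis on $P$ (all eigenvalues with negative real part, hence neither $0$ nor $2\lambda_0 i$ is an eigenvalue) to conclude the linear system is nonsingular. Your remark that the coefficients $\beta_j,\alpha_{ji}$ are in general \emph{complex} rather than real is a correct and worthwhile observation — the statement's wording is imprecise on this point, and reality of the resulting change of variables is recovered, as you note, from the conjugation symmetry $F_-=\overline{F_+}$, not from reality of the individual coefficients of $\mathfrak{r}$.
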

\begin{proof}
	Take a polynomial $\mathfrak{r}$ and consider the map $L:\mathfrak{r}\xrightarrow{L} L\mathfrak{r}$ where 
	\begin{equation*}
		L\mathfrak{r}=\lambda_0i\mathfrak{r} - \lambda_0i\partial_{w}\mathfrak{r}w +\lambda_0i\partial_{\bar{w}}\mathfrak{r}\bar{w}-\nabla_{y}\mathfrak{r}P\y.
	\end{equation*} 
	The map $L$ is linear in the space of homogeneous polynomial in the variables $w,\bar{w},(y_j)_{j=1}^{n-2}$ of degree two into itself. Denote $H_2=\textrm{span}\big\{w^2,\bar{w}^2,w\bar{w}, (wy_j)_{j=1}^{n-2},$ $(\bar{w}y_j)_{j=1}^{n-2}\big\}$ and compute the action of the map $L$ on the canonical basis:
	\begin{align*}
		&L(w^2)=-\lambda_0iw^2,\\
		&L(\bar{w}^2)=3\lambda_0i\bar{w}^2,\\
		&L(w\bar{w})=\lambda_0iw\bar{w},\\
		&L(wy_j)=-w\sum_{l=1}^{n-2}P_{jl}y_l,\\
		&L(\bar{w}y_j)=\bar{w}\big(2\lambda_0iy_j-\sum_{l=1}^{n-2}P_{jl}y_l\big).
	\end{align*}
	The matrix associated to the linear application in the canonical basis 
	$$\big\{w^2,\bar{w}^2, w\bar{w}, (wy_j)_{j=1}^{n-2}, (\bar{w}y_j)_{j=1}^{n-2}\big\}$$
	is
	\begin{equation*}
		\small
		L=\begin{pmatrix}
			-\lambda_0i & 0          & 0          & 0                & \ldots & 0         & 0        & \ldots & 0 \\
			0           & 3\lambda_0i & 0          & 0                 & \ldots & 0         & 0        & \ldots & 0 \\
			0           & 0          & \lambda_0i & 0                & \ldots & 0         & 0        & \ldots & 0 \\
			0           & 0          & 0          & -P_{1,1}     & \ldots & -P_{1,n-2} & 0        & \ldots & 0 \\
			0           & 0          & 0          & -P_{2,1}     & \ldots & -P_{2,n-2} & 0        & \ldots & 0 \\
			\ldots      & \ldots     & \ldots     & \ldots      & \ldots & \ldots    & \ldots   & \ldots & \ldots  \\	
			0           & 0          & 0          & -P_{n-2,1}  & \ldots & -P_{n-2,n-2} & 0        & \ldots & 0 \\	
			0           & 0          & 0          & 0                  & \ldots & 0      & 2\lambda_0i-P_{1,1}      & \ldots & -P_{1,n-2} \\	
			\ldots      & \ldots     & \ldots     & \ldots      & \ldots & \ldots    & \ldots   & \ldots & \ldots  \\	
			0           & 0          & 0          & 0                   & \ldots & 0      & -P_{n-2,1}      & \ldots & 2\lambda_0i-P_{n-2,n-2} 
		\end{pmatrix}.
	\end{equation*}
	Let $D=\left(\begin{smallmatrix*}-\lambda_0i & 0 & 0\\ 0 & 3\lambda_0i & 0\\0& 0 & \lambda_0i \end{smallmatrix*}\right)$ then the matrix $L$ can be written as: 
	\begin{equation*}
		L=\begin{pmatrix}
			D & 0  & 0 \\
			0 & -P & 0 \\
			0 & 0 &  2\lambda_0i \Id-P
		\end{pmatrix}.
	\end{equation*}
	Observe that $\det A =\det D \det (-P) \det (2\lambda_0iId-P)\neq 0$. Indeed suppose that $\det (-2\lambda_0i\Id-P)=0$ then $0$ is an  eigenvalue of the matrix $2\lambda_0i\Id-P$ and there is a vector $v\in\textrm{span}\{\bar{w}y_l,\ j=1,\ldots,n-2\}$ such that $\big(2\lambda_0i\Id-P\big)v=0$. This implies that  $2\lambda_0i$ is eigenvalue of $P$, but $P$ admits only eigenvalues with strictly negative real part. Matrix $L$ is invertible then there is a unique solution $\mathfrak{r}$ to the equation $L\mathfrak{r}=- F_+^{(2)}$ and \eqref{eq:eta} holds.
\end{proof}
Choose $\mathfrak{r}$ as in Proposition \ref{prop:solutionp} then equation \eqref{eq:wprime} becomes
\begin{align*}
	\de w' =& \Big[ -\lambda_0i\w' + F_y^{(2)} + F^{(3)}(w',\bar{w}',y)+r_A'(w',\bar{w}',y)	\Big] \de t.
\end{align*}
Applying the inverse function $\mathfrak{l}^{-1}$, the equation \eqref{eq:Carr1} becomes
\begin{align}
	\de z = & \Big[ Qz+f^{(3)}(z,y)+f_y^{(2)}(y) + r_f(z,y)  \Big]\de t
\end{align}
where $f^{(3)}$ is the function containing all the cubic terms, $f_y^{(2)}$ is quadratic in $y$ and the higher order terms are in $r_f$.
\paragraph*{Acknowledgment:} The authors are very grateful to professor Lorenzo Bertini for introducing them to the problem and for the inspiring discussions.


\end{document}